\numberwithin{equation}{section}
\newcommand{\eqdef}{\stackrel{\scriptscriptstyle\rm def}{=}}
\def\ZZ {{\mathbb Z}}\def\La{\Lambda}\def\Si{\Sigma}
\def\NN {{\mathbb N}}\def\CC {{\mathbf C}}\def\RR {{\mathbb R}}
\def\Ga{\Gamma}\def\be{\beta}\def\la{\lambda}\def\ga{\gamma}\def\de{\delta}
\def\De{\Delta}\def\Ups{\Upsilon}\def\bQ{Q^\ast}
\def\cL{{\mathcal I}}\def\cA{{\mathcal A}}\def\cM{{\mathcal M}}
\def\bR{\mathbb R}\def\sst{{ss}}\def\uut{{uu}}\def\st{s}\def\ut{u}\def\loc{{\operatorname{loc}}}\def\cut{{cu}}
\newtheorem{theo}{Theorem}
\newtheorem{theor}{Theorem}[section]
\newtheorem{theoo}[theor]{Theorem}
\newtheorem{lemm}[theor]{Lemma}
\newtheorem{claim}[theor]{Claim}
\newtheorem{coro}[theor]{Corollary}
\newtheorem{defi}[theor]{Definition}
\newtheorem{prop}[theor]{Proposition}
\newtheorem{rema}[theor]{Remark}
\newtheorem{ques}[theor]{Question}
\newtheorem{nota}{Notation}[section]
\newlength\Li \newlength\Lii
\title[Porcupine-like horseshoes]{Porcupine-like horseshoes:\\ Transitivity, Lyapunov spectrum,\\ and phase transitions}
\author{Lorenzo J. D\'\i az}
\address{Departamento de Matem\'atica PUC-Rio, Marqu\^es de S\~ao Vicente 225, G\'avea, Rio de Janeiro 225453-900, Brazil}
\email{lodiaz@mat.puc-rio.br}
\author{Katrin~Gelfert}
 \address{Instituto de Matem\'atica UFRJ, Av. Athos da Silveira Ramos 149, Cidade Universit\'aria - Ilha do Fund\~ao, Rio de Janeiro 21945-909,  Brazil}
\email{gelfert@im.ufrj.br}
\thanks{This paper was partially supported by CNPq, Faperj, Pronex (Brazil) and the Alexander von Humboldt Foundation (Germany). The authors thank the Departamento de Matem\'atica Pura, Universidade de Porto, and IM\,PAN Warsaw for support and hospitality, as well as the participants of the DynNonHyp ANR-meeting Dijon, June 2010, M.~Rams for useful comments, and an  anonymous referee for careful reading and relevant suggestions.}
\keywords{homoclinic class, Lyapunov exponent, non-contracting iterated function system, partial hyperbolicity, phase transition, spectral gap, skew-product, transitive set}
\subjclass[2000]{
37D35, 
37D25, 
37E05, 
37D30, 
37C29 
}
\begin{document}

\maketitle
\begin{abstract}
	We study a partially hyperbolic and topologically transitive local diffeomorphism $F$ that is a skew-product over a horseshoe map. This system is derived from a homoclinic class and contains infinitely many hyperbolic periodic points of different indices and hence is not hyperbolic. The associated transitive invariant set $\Lambda$ possesses a very rich fiber structure, it contains uncountably many trivial and uncountably many non-trivial fibers. Moreover, the spectrum of the central Lyapunov exponents of $F|_{\Lambda}$ contains a gap and hence gives rise to a first order phase transition.
	A major part of the proofs relies on the analysis of an associated iterated function system that is genuinely non-contracting.
\end{abstract}

\tableofcontents

\section{Introduction}

In this paper we  provide examples of non-hyperbolic transitive sets, that we call \emph{porcupine-like horseshoes} or, briefly, \emph{porcupines}, that show a rich  dynamics although they admit a quite simple formulation.
Their dynamics are conjugate to skew-products over a shift whose fiber dynamics  are given by genuinely non-contracting iterated function systems (IFS) on the unit interval.

Naively, from a topological point of view, a porcupine is a transitive set that looks like a horseshoe with infinitely many spines attached at various levels and in a dense way. In terms of its hyperbolic-like structure, it is a partially hyperbolic set with a one-dimensional center, whose spectrum of central Lyapunov exponents contains an interval with negative and positive values which, in particular, illustrates that the porcupine is non-hyperbolic. Although the dynamics on the porcupine is transitive, its spectrum of central exponents has a gap and thus gives rise to a first order phase transition.

Our goal is to present these examples  and to explore their dynamical properties. We are not aiming for the most general setting possible, but instead want to present  the ideas behind our constructions. We think that these examples are representative providing models for a number of key properties of non-hyperbolic dynamics.

\subsection{Non-contracting iterated function systems}\label{subsec:1}

The analysis in this paper is essentially built on properties of a certain class of non-contracting iterated function systems associated to the central dynamics of the porcupine.

We consider $f_0$, $f_1\colon[0,1]\to\bR$ that are $C^k$ smooth, $k\ge1$, and satisfy
\begin{enumerate}	
	\item[] $f_0$ is orientation preserving, has an expanding fixed point $q$, a contracting fixed point $p>q$, and no further fixed points in $(q,p)$,
	\item[] $f_1$ is an orientation reversing contraction
\end{enumerate}
that form an open set in the corresponding product topology. And we study the co-dimension 1 sub-manifold of maps satisfying the \emph{($q,p$)-cycle condition}
\[
	f_1(p)=q
\]
(compare Figure~\ref{fig.ifs}).

Now we consider compositions of maps $f_i$, $i=0$, $1$. Given a sequence $\xi=(\ldots\xi_{-1}.\xi_0\xi_1\ldots)\in\Sigma_2\eqdef\{0,1\}^\ZZ$, for a point $x\in [q,p]$, we define the \emph{(forward) Lyapunov exponent} of the IFS generated by $f_0$, $f_1$ at $(x,\xi)$ by
\[
	\chi(x,\xi)\eqdef
	\lim_{n\to\infty}\frac 1 n \log
	\big\lvert\big(f_{[\xi_0\ldots\xi_n]} \big)'(x)\big\rvert, \quad
	\text{ where }
	f_{[\xi_0\ldots\xi_n]}\eqdef f_{\xi_n}\circ\cdots\circ f_{\xi_0}.
\]
We restrict our considerations to points that remain in the interval $[q,p]$ under forward and backward iterations. For that we define the \emph{admissible domain} $I_\xi\subset[q,p]$ by
\[
	I_\xi \eqdef
	\bigcap_{m\ge1}
	(f_{\xi_{-1}}\circ\cdots\circ f_{\xi_{-m}})\big([q,p]\big).
\]

 We obtain the following auxiliary result, that is an one-dimensional version of the main result in this paper  (Theorem~\ref{t.bonyexample}).

\begin{theo}\label{theorem0}
	For the IFS generated by maps $f_0$, $f_1$ as above satisfying the ($q,p$)-cycle condition, we have:
	\begin{enumerate}
	\item [(A)]  There are is an uncountable and dense set of sequences $\xi\in\Sigma_2$ so that the admissible domain $I_\xi$ is non-trivial. There is a residual set of sequences $\xi\in\Sigma_2$ so that $I_\xi$ contains a single point only.
	\medskip
	\item [(B)] The points that are fixed with respect to $f_{[\xi_0\ldots\xi_m]}$ for certain $\xi\in\Sigma_2$ and $m\ge0$ are dense in $[q,p]$. Moreover, there exist $x\in(q,p)$ and $\xi\in\Sigma_2$ such that $\{f_{[\xi_0\ldots\xi_m]}(x)\}_{m\ge0}$ is dense in $[q,p]$.
	\medskip
	\item [(C)] There exists $\rho\in(0,\log\,\lvert f'_0(q)\rvert)$ so that the spectrum of all possible Lyapunov exponents is contained in
	\[
		\big[\log\,\lvert f'_0(p)\rvert,\rho\big]\cup\big\{\log\,\lvert f'_0(q)\rvert\big\}.
	\]
\end{enumerate}	
\end{theo}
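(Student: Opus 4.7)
The plan is to treat the three parts of the theorem separately, all exploiting the cycle condition $f_1(p)=q$ together with the hyperbolicity of $f_0$ at its fixed points $q$ (repelling) and $p$ (attracting).

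For part (A), the non-trivial case is produced by sequences whose past is eventually $0$: since $f_0([q,p])=[q,p]$, if $\xi_{-m}=0$ for every $m\ge M$ then
\[
	I_\xi=(f_{\xi_{-1}}\circ\cdots\circ f_{\xi_{-M+1}})([q,p])
\]
is a non-degenerate interval, and varying the future freely yields uncountably many such $\xi$ meeting every cylinder. For the generic triviality, I would apply Baire's theorem to the sets $U_n=\{\xi:\mathrm{diam}(I_\xi)<1/n\}$: each is open since the nested intersection makes small diameter detectable from a finite truncation of the past composition, and each is dense by modifying $\xi$ only at very remote past positions and inserting there a long block of consecutive $1$'s, so that the contraction of $f_1$ forces the resulting diameter to become arbitrarily small.

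For part (B), given $x\in(q,p)$ and $\epsilon>0$ I would exhibit a composition
\[
	g=f_0^{N}\circ f_1\circ f_0^{M}=f_{[\xi_0\ldots\xi_{M+N}]},\qquad \xi_0\ldots\xi_{M+N}=0^{M}1\,0^{N},
\]
with a fixed point in $(x-\epsilon,x+\epsilon)$. One takes $M$ large so that $f_0^M(x)$ is close to $p$ and hence $f_1(f_0^M(x))$ is close to $q$; then $N$ is chosen so that $f_0^N$, started from near $q$, returns close to $x$ --- possible because $q$ is repelling and orbits just above $q$ sweep through every intermediate value before approaching $p$. Linearization at the two fixed points shows that $g'(x)$ is, asymptotically as $M\to\infty$, of order $(x-q)/(p-x)$ times a constant, in particular bounded away from $1$, so a fixed point of $g$ near $x$ is obtained by Banach's principle when $|g'(x)|<1$ or by applying the intermediate-value theorem to $g(y)-y$ when $|g'(x)|>1$. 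For the dense forward orbit, I would fix a countable dense subset $\{x_k\}\subset[q,p]$ and build $\xi$ inductively, appending at each step a block of the same ``travel via $p$ and jump through $f_1$'' type that steers the current iterate close to $x_{k+1}$.

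For part (C), the isolated value $\log|f_0'(q)|$ is realized only by the constant orbit at $q$; the essence is the upper bound on the rest of the spectrum. Any other orbit either converges to $p$ under $f_0$ (exponent $\log|f_0'(p)|$) or performs infinitely many excursions that carry it from near $q$ up to near $p$ under $f_0$ and return to near $q$ via a single application of $f_1$. Linearizing at both fixed points, the total log-derivative accumulated in one such cycle is asymptotically pinned, independently of the closest approach $\delta$ to $q$, while the cycle length grows like $C\log(1/\delta)$ with $C$ determined by the multipliers at $q$ and $p$; consequently the per-iterate average along any cycle stays strictly below $\log|f_0'(q)|$, providing a $\rho<\log|f_0'(q)|$ depending only on $f_0$ and $f_1$. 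The lower bound $\log|f_0'(p)|$ follows from the uniform minimum of $|f_0'|$ on $[q,p]$ together with comparable control of $|f_1'|$. The main obstacle will be this gap: orbits may interleave short $f_1$-excursions with arbitrarily long sojourns near $q$, so it is not enough to bound one cycle in isolation. The delicate step is to show that the deficit incurred on each excursion is large enough, on average, to prevent the time average from approaching $\log|f_0'(q)|$; this requires careful uniform linearization estimates at $q$ and $p$ and a correct tallying of how the excursion structure balances the positive contribution of time spent very close to $q$.
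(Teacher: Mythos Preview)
Your outline for (A) matches the paper's argument: non-trivial admissible domains come from sequences whose past is eventually $0$ (Lemma~\ref{l.nontri}), and the residual set of trivial domains is obtained exactly via your Baire argument on the sets $\{\xi:|I_{[\xi]}|<1/n\}$, using upper semi-continuity of $\xi\mapsto|I_{[\xi]}|$ (Lemma~\ref{l.upsemi}) and density of asymptotically contracting sequences (Proposition~\ref{p.residualtrivfib}).

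For (B) your route differs from the paper's. You propose a single ``loop'' map $g=f_0^N\circ f_1\circ f_0^M$ and use the pleasant asymptotic $|g'(x)|\sim (x-q)/(p-x)$. The paper instead develops \emph{expanding itineraries} (Section~\ref{ss.expanding}): starting from any small closed interval $J\subset(q,p)$ one finds a finite word $\xi(J)$ so that $f_{[\xi(J)]}$ is uniformly expanding on $J$ and its image covers a fixed fundamental domain of $f_0$ (Lemma~\ref{l.successor}, Proposition~\ref{l.expiti}); this forces $f_{[\xi(J)]}(J)\supset J$ after concatenation, and the fixed point lies in $J$ by construction. Your approach has a genuine gap at exactly this covering step: with a single loop you only control $g(x)$ to within one fundamental domain of $f_0$ near $x$, not to within an arbitrary $\epsilon$, and the bound on $|g'|$ alone does not force $g((x-\epsilon,x+\epsilon))\supset(x-\epsilon,x+\epsilon)$ or $\subset(x-\epsilon,x+\epsilon)$. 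The paper's mechanism buys precisely this: it iterates the expanding step until the image of an $\epsilon$-interval swallows a whole fundamental domain, after which closing up is automatic.

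For (C) you have the right skeleton and you correctly isolate the crux. One imprecision: the total log-derivative over an excursion is not literally ``pinned independently of $\delta$''; rather, the quantitative link is that the closest approach to $q$ is bounded below by a constant times $\lambda^{N}$, where $N$ is the time just spent near $p$ (this is Claim~\ref{c.alphaa}). Thus the time $M$ subsequently spent near $q$ satisfies $M\lesssim N\,|\log\lambda|/\log\beta$, and the per-iterate average over the block $[i_k,e_k]$ is bounded by a fixed $\log\beta_0''<\log\beta$ (Claim~\ref{e.2loop}). Between excursions the orbit stays outside a fixed neighbourhood of $q$, where the derivative is at most some $\beta_0<\beta$. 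The delicate bookkeeping you anticipate --- decomposing an arbitrary orbit into these two kinds of blocks and showing the convex combination of their averages stays below $\log\beta$ --- is exactly the content of Proposition~\ref{p.beta}; your linearization heuristic gives the right orders of magnitude but does not by itself yield the uniform block-wise bound for short excursions, which the paper handles separately by the crude estimate $\max_{1\le\ell\le L}(\ell\log\beta+\log\gamma)/(\ell+1)<\log\beta$.
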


The cycle condition seems to play a role similar to the Misiurewicz property in one-dimensional dynamics best illustrated by the behavior of the quadratic map $f(x)= 1-2x^2$ that has some alike features~\footnote{A differentiable interval map satisfies the \emph{Misiurewicz condition} if the forward orbit of a critical point does not accumulate onto critical points. Note that $f$ is conjugate to the tent map in $[-1,1]$
and that this conjugation is differentiable in $(-1,1)$. Thus, in particular, the spectrum of the Lyapunov exponents of $f$ contains only the two values $2\log 2$ and $\log 2$}. However, we point out that in our case the breaking of hyperbolicity and the spectral gap are not caused by any critical behavior. Note also that in our case the spectrum is richer and contains a continuum with positive and negative values.
In our case, the cycle condition and the fact that $f_1$ is orientation reversing allows transitivity. However, typical orbits only slowly approach the cycle points $p$ (which corresponds to the critical point) and $q$ (which corresponds to the post-critical point) giving rise to some  transient behavior and hence to the gap in the spectrum.

It would be interesting to find other representative examples that show a gap in the Lyapunov spectrum and hence indicating the presence of a first order phase transitions (the associated pressure function is not differentiable, see Proposition~\ref{p.phasetrans}). We believe that this point deserves special attention. A collection of examples that present phase transitions are provided in~\cite{IomTod:} in the case of interval maps and~\cite{Oli:99} for abstract shift spaces.

\subsection{Non-hyperbolic transitive homoclinic classes}

We now put the above abstract results into the framework of local diffeomorphisms and return to the analysis of porcupines.

The porcupines considered in this paper are in fact homoclinic classes (see Definitions~\ref{def:porc} and~\ref{def:homcla}) that contain infinitely many saddles of different \emph{indices} (dimension of the unstable direction) scattered throughout the class preventing hyperbolicity. Moreover, they exhibit a rich topological structure in their fibers (which are tangent to the central direction): there are uncountably many fibers whose intersections with the porcupine are continua and infinitely many fibers whose intersections with the porcupine are just points.
Further,  the spectrum of the central Lyapunov exponents  of these sets has a gap and contains an interval (containing positive and negative values). These properties will be stated in Theorem~\ref{t.bonyexample} that is a higher-dimensional version of Theorem~\ref{theorem0} for local diffeomorphisms.

We point out that porcupines also have strong indications to show  a lot of genuinely non-hyperbolic properties that will be explored elsewhere. For example, the transitive porcupines that we construct  do not possess the shadowing property and, following constructions in~\cite{GorIlyKleNal:05,DiaGor:09,BonDiaGor:10}, one can show that they carry non-hyperbolic ergodic measures with large supports and, in view of~\cite{BonDia:08}, we expect that they also display robust heterodimensional cycles.

Let us point out some further motivation.
Il'yashenko in his lecture~\cite{Ily:10} presented topological examples of fibered systems over a shift map that possess recurrent sets (that he calls {\emph{bony sets}}) containing some fibers.
The transitive porcupines provide examples of smooth realizations of such systems. Kudryashov~\cite{Kud:10,Kud:10b} obtained recently a quite general open class of smooth skew-product systems which exhibit bony attractors.
Our examples are also motivated by the construction in~\cite{DiaHorRioSam:09} of bifurcating homoclinic classes and the subsequent study of their Lyapunov spectrum in~\cite{LepOliRio:}. These sets show indeed porcupine-like features but are ``essentially hyperbolic'' in the sense that all their ergodic measures are hyperbolic.

Let us now point out two topological properties of our
examples (in fact also present in~\cite{DiaHorRioSam:09}):
\begin{enumerate}
\item[1)] the porcupine is the homoclinic class $H( Q^\ast,F)$ of a
saddle $ Q^\ast$ that contains two fixed points $P$ and $Q$ of
different indices that are related by a heterodimensional cycle
(see Definition~\ref{def:hetcyc}),
\item[2)] the saddle $Q$ has the same index as $ Q^\ast$, but is not homoclinically related to $Q$ (compare Definition~\ref{def:homcla}).
\end{enumerate}
Comparing with the one-dimensional setting in Section~\ref{subsec:1} the points $P$ and $Q$ play the role of $p$ and $q$, and the heterodimensional cycle corresponds to the cycle property.
Our examples are  ``essentially non-hyperbolic'': The porcupines contain infinitely many saddles of different indices.
This property (and the fact that the porcupine is transitive) is the main reason for the fact that the central Lyapunov spectrum contains an interval with positive and negative values. We remark that 2) is also the main reason for the presence of a gap in the spectrum of the central Lyapunov exponents in~\cite{LepOliRio:}. In fact, the condition about the saddle
$Q$ in 2) is a \emph{necessary} condition to obtain such a gap
(see Lemma~\ref{l.homrelspec}). In our example the
spectrum of the Lyapunov exponent associated to the central
direction contains a continuum with positive and negative values
and a separated point. This is as far as we know the first example
with a spectral gap that is essentially non-hyperbolic and is not related to the occurrence of critical points.

 We are aware of the fact that sets that display properties 1) and 2) above are quite specific: By the Kupka-Smale theorem saddles of generic diffeomorphisms are not related by heterodimensional cycles and after~\cite{AbdBonCroDiaWen:07}  property 2) is $C^1$ non-generic. But this clearly does not imply that the classes discussed here are not representative.

Finally, we would like to mention that this paper proceeds a systematic study of non-hyperbolic homoclinic classes. Besides the above references, we would like to mention~\cite{AbdBonCro:}, \cite{DiaGor:09}, and \cite{BonDiaGor:10} where ergodic properties (related to the Lyapunov spectrum) of homoclinic classes are stated.

Before presenting our results, let us state precisely the main objects we are going to study.

\begin{defi}[Partial hyperbolicity]\label{def:parthyp}{\rm
    An $F$-invariant compact set $\Lambda$ is said to be \emph{partially hyperbolic} if there is a $dF$-invariant dominated splitting $E^s\oplus E^c\oplus E^u$ where $dF_{|E^s}$ is uniformly contracting, $dF_{|E^u}$ is uniformly expanding, and $E^c$ is non-trivial and non-hyperbolic. We say that $E^c$ is the {\emph{central bundle.}} The set $\Lambda$ is called {\emph{strongly partially hyperbolic}} if the three bundles $E^s$, $E^c$, and $E^u$ are non-trivial.
    See~\cite[Definition B.1]{BonDiaVia:05} for more details.
}\end{defi}

\begin{defi}[Porcupines]\label{def:porc}{\rm
    We call a compact $F$-invariant set $\Lambda$ of a (local) diffeomorphism $F$ a \emph{porcupine-like horseshoe} or \emph{porcupine} if
\begin{itemize}
    \item $\Lambda$ is the maximal invariant set in some neighborhood, {\emph{transitive}} (existence of a dense orbit), and strongly partially hyperbolic with
    one-dimensional central bundle,
    \item there is a subshift of finite type $\sigma\colon \Sigma \to \Sigma$ and a semiconjugation $\pi \colon \Lambda\to \Sigma$ such that $\sigma \circ \pi= \pi \circ F$, $\pi^{-1}(\xi)$ contains a continuum for uncountably many $\xi \in \Sigma$ and a single point for uncountably many $\xi\in\Sigma$.
\end{itemize}
We call $\pi^{-1}(\xi)$ a  \emph{spine} and say that it is {\emph{non-trivial}} if it contains a continuum. The spine of a point $X\in \Lambda$ is the set $\pi^{-1}(\pi(X))$.
}\end{defi}

For examples resemembling porcupines but having non-trivial spines only we refer to~\cite{BonDia:96, GorIly:99, GorIly:00}. Concerning bony attractors, according to the definition in~\cite{Kud:10} such an attractor may have only non-trivial fibers. Furthermore, there are quite interesting examples in~\cite{Kud:10,Kud:10b} of bony attractors where the trivial fibers form a ``graph'' of a continuous function over a subset of the shift space.

We are in particular interested in the case where $\sigma$ is the full shift defined on $\Sigma_2 =\{0,1\}^\ZZ$ and $\Lambda$ is a homoclinic class. More precisely, we have the following standard definition:

\begin{defi}[Homoclinic class]\label{def:homcla}
    {\rm
    Given a diffeomorphism $F$, the \emph{homoclinic class} $H(P,F)$ of a saddle point $P$ of $F$ is defined to be the closure of the transverse intersections of the stable and unstable manifolds of the orbit of $P$. Two saddle points $P$ and $Q$ are said to be \emph{homoclinically related} if the invariant manifolds of their orbits meet cyclically and transversally. We say that a homoclinic class is \emph{non-trivial} if it contains at least two different orbits.

    Given a neighborhood $U$ of the orbit of $P$, we call the closure of the set of points $R$ that are in the transverse intersections of the stable and unstable manifolds of the orbit of $P$ and have an orbit entirely contained in $U$ the \emph{homoclinic class relative to $U$}. We denote this set by $H_U(P,F)$.
    }
\end{defi}

\begin{rema}\label{r.funda}{\rm
	Homoclinically related saddles have the same index. Remark also that the homoclinic class of a saddle may contain periodic points that
are not homoclinically related to it. Indeed, this is the situation analyzed in this paper. Finally, observe that the homoclinic class $H(P,F)$ coincides with the closure of all saddle points that are homoclinically related to $P$. Moreover, a homoclinic class is always transitive. Finally, a non-trivial homoclinic class is always uncountable.
}\end{rema}

\begin{defi}[Lyapunov spectrum and gaps]\label{def:spectrum}
{\em Consider a compact invariant set $\Lambda$ of a diffeomorphism $F$ with a partially hyperbolic splitting $E^s\oplus E^c\oplus E^u$. Given a Lyapunov regular point $S\in \Lambda$, its \emph{Lyapunov exponent associated to the central direction $E^c$}
is
\begin{equation}\label{d.deflyap}
    \chi_c(S) \eqdef \lim_{n\to\infty}
    \frac{1}{n}\log \,\lVert dF^n|_{E^c_S}\rVert.
\end{equation}
We consider the \emph{spectrum of central Lyapunov exponents} of  $\Lambda$ defined by
\[
      \cL_{\rm reg}^c(\La)
      \eqdef  \big\{ \chi_c(S)\colon
      		S\in \La\text{ and }S \text{ is Lyapunov regular}\big\}.
\]
We say that $(\rho,\rho')$ is a {\emph{gap}}  of the spectrum of $\La$ if there are numbers  $\lambda\le\rho<\rho'\le\beta$ such that
\[
	(\rho,\rho')\cap\cL_{\rm reg}^c(\La)=\emptyset
	\quad\text{ and }\quad
	\lambda,\beta \in \cL_{\rm reg}^c(\La).
\]	
}
\end{defi}

The following is our main result.

\begin{theo} \label{t.bonyexample}
	There are $C^1$ local diffeomorphisms $F$ having a porcupine $\Lambda_F$ with the following properties:
	\begin{enumerate}
	    \item[(A)]
	        There is a continuous semiconjugation $\varpi\colon \Lambda_F\to \Si_2$, $\sigma\circ \varpi=\varpi \circ F$, such that
	    \begin{enumerate}
	        \item \label{i.segments}
	        There is an uncountable and dense subset of sequences $\xi\in \Si_2$ such that $\varpi^{-1} (\xi)$ is non-trivial. There is a residual subset of sequences $\xi\in\Si_2$ such that $\varpi^{-1} (\xi)$ is trivial.
	        \item \label{i.points}
	        There is an uncountable and dense subset of $\Lambda_F$ with non-trivial spines.
    	\end{enumerate}
    	\smallskip
	\item[(B)]	
		The subset of saddles of index $u+1$ of $\Lambda_F$ is dense in $\Lambda_F$. Moreover, $\Lambda_F$ contains also infinitely many saddles of index $u$ and thus is not uniformly hyperbolic.
	\smallskip
	 \item[(C)]
	 	The are numbers $0<\rho<\rho'$ such that $(\rho,\rho')$ is a gap of the spectrum of central Lyapunov exponents of $\Lambda_F$. Moreover, this spectrum contains an interval with negative and positive values. Furthermore, the pressure function $t\mapsto P(-t\log\,\lVert dF|_{E^c}\rVert)$ is not differentiable at some point, that is, has a first order phase transition.
    \end{enumerate}

    Furthermore, there is an open set $U$ such that $\Lambda_F$ is the (relative) homoclinic class $H=H_U(R,F)$ of a saddle $R$ of index $u+1\ge2$ satisfying:
    \begin{enumerate}
    	\item[(D)]
        The set $H=\Lambda_F$ is the locally maximal invariant set in $U$. Moreover, this class contains the (relative) non-trivial homoclinic class of a saddle of index $u$. Further, there is a saddle $Q\in H$ of index $u+1$ such that $H_U(Q,F)=\{Q\}\subset H$.
    \end{enumerate}
\end{theo}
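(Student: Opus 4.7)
The plan is to realize $F$ as a $C^1$ local diffeomorphism that is a skew-product over a Smale horseshoe whose fiber dynamics are, up to smooth conjugation, the IFS $\{f_0,f_1\}$ of Theorem~\ref{theorem0}. Concretely, I would take a uniformly hyperbolic horseshoe $\Ga$ of a diffeomorphism $g$, conjugate via some $\pi\colon\Ga\to\Si_2$ to the full $2$-shift, and define $F(X,t)=(g(X),f_{i(X)}(t))$ on a product-like neighborhood of $\Ga\times[q,p]$, where $i(X)\in\{0,1\}$ picks out the branch of $\Ga$ containing $X$. Choosing the horseshoe contraction and expansion rates strictly stronger in modulus than $\sup|f_j'|$ and $\inf|f_j'|$ on $[q,p]$ for $j=0,1$ yields a dominated splitting $E^s\oplus E^c\oplus E^u$ with $E^c$ vertical, so $F$ is strongly partially hyperbolic in the sense of Definition~\ref{def:parthyp}. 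Choosing a sufficiently small neighborhood $U$ so that the maximal invariant set in $U$ is $\La_F\eqdef\{(X,t)\colon t\in I_{\pi(X)}\}$, the semiconjugation $\varpi$ of Definition~\ref{def:porc} is the projection to the base followed by $\pi$.

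With this framework, items (A)--(C) follow by transporting Theorem~\ref{theorem0} through the skew-product structure. Spines $\varpi^{-1}(\xi)$ correspond to admissible domains $I_\xi$, so Theorem~\ref{theorem0}(A) yields (A)(\ref{i.segments}), and (A)(\ref{i.points}) follows by combining density in $\Si_2$ of sequences with non-trivial $I_\xi$ with the product structure. Periodic saddles of $F$ in $\La_F$ arise from fiber-periodic points of the IFS, with index $u+1$ or $u$ according to the sign of the logarithmic central multiplier; density from Theorem~\ref{theorem0}(B), together with the presence of the fiber-expanding fixed point $q$ and the fiber-contracting fixed point $p$ of $f_0$, provides (B). For (C), the central Lyapunov exponent at a Lyapunov regular $(X,t)\in\La_F$ equals the IFS exponent $\chi(t,\pi(X))$, so Theorem~\ref{theorem0}(C) hands us the spectral gap $(\rho,\log|f_0'(q)|)$. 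Negative values in $[\log|f_0'(p)|,\rho]$ are produced by ergodic measures close to the Dirac mass at the saddle over $p$, positive values by measures on periodic orbits that spend enough time near $q$, and the phase transition follows from the gap via Proposition~\ref{p.phasetrans}.

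The real work lies in item (D). I would pick $R$ as a suitable periodic saddle of index $u+1$ in $\La_F$ whose orbit explores both branches of the IFS. The inclusion $H_U(R,F)\subset\La_F$ is immediate since $\La_F$ is the maximal invariant set in $U$. The converse $\La_F\subset H_U(R,F)$ is where the $(q,p)$-cycle condition $f_1(p)=q$ plays its role analogous to that of a heterodimensional cycle: combined with density of transverse intersections of horizontal unstable and stable manifolds of $R$ coming from the base horseshoe, and with the central connection $f_1(p)=q$ glueing the central branches together, it yields transverse homoclinic points of $R$ accumulating on every point of $\La_F$. The containment of the (relative) non-trivial homoclinic class of the index-$u$ saddle $P$ (over $p$) is similar, based on $P$ being homoclinically related to its own translates via the $(q,p)$-cycle. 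Finally, the isolated saddle $Q$ of index $u+1$ is taken as the saddle over the base fixed point $\xi^\ast=(\ldots000\ldots)$ with fiber coordinate $q$: the orientation of $f_0,f_1$ and the one-sided character of the IFS at the expanding endpoint $q$ force every orbit passing through $Q$ and remaining in $U$ to coincide with the orbit of $Q$ itself. The hardest step, and the main technical obstacle, is this combinatorial analysis of admissible IFS sequences with $q\in I_\xi$, which must be sharp enough both to produce the dense transverse intersections required for $R$ and to exclude them for $Q$.
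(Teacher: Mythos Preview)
Your overall strategy matches the paper's: build $F$ as a step skew-product over a horseshoe with fiber maps $f_0,f_1$, read off (A)--(C) from Theorem~\ref{theorem0} via the skew-product structure, and establish (D) by proving $\Lambda_F=H(Q^\ast,F)$ for a suitable index-$(u+1)$ saddle.

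However, you misidentify the mechanism behind $\Lambda_F\subset H(R,F)$. The key tool is not a ``combinatorial analysis of admissible IFS sequences with $q\in I_\xi$'' (which is backward data), but the construction of \emph{expanding itineraries} for the forward IFS (Section~\ref{ss.expanding}, Proposition~\ref{l.expiti}): any small subinterval of $(q,p)$ can be pushed forward along some finite word in $f_0,f_1$ until its image covers a full fundamental domain of $f_0$. This is what allows an arbitrarily small center-unstable disk around a point of $\Lambda_F$ to be iterated forward until it transversally meets $W^s(Q^\ast,F)$ (Proposition~\ref{p.delta+deltaepsilon}). The cycle condition $f_1(p)=q$ enters only to reinject orbits near $p$ back near $q$, feeding the expansion. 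You also overestimate the difficulty of $H_U(Q,F)=\{Q\}$: this is immediate (Lemma~\ref{l.hq}) from the observation that $W^s_{\rm loc}(Q,F)$ in the center direction is the singleton $\{0\}$ while $W^u(Q,F)\setminus\{Q\}$ lies entirely in $\{x>0\}$; no sharp combinatorics are needed to exclude homoclinic intersections for $Q$.

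Two smaller gaps. Density of index-$(u+1)$ saddles in $\Lambda_F$ does not follow from Theorem~\ref{theorem0}(B) alone, since that result does not separate expanding from contracting fiber periodic points; the paper obtains it instead from $\Lambda_F=H(Q^\ast,F)$ together with the fact that every index-$(u+1)$ saddle other than $Q$ is homoclinically related to $Q^\ast$ (Corollary~\ref{c.homoclinicallyrelated}), so that such saddles are dense by the very definition of a homoclinic class. And your argument for the interval in the spectrum only produces endpoints; the paper fills the interval by constructing periodic points with central exponents arbitrarily close to $0$ from both sides (Propositions~\ref{p.lajk} and~\ref{p.posspec}) and then invoking convexity of the spectrum over horseshoes of homoclinically related saddles (Lemma~\ref{l.homrelspec}).
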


Our examples are associated to step skew-product diffeomorphisms, locally we have
$$
	F(\widehat x, x)=
	\big( \Phi(\widehat x), f_{\widehat x}(x) \big), \quad
	\widehat x\in [0,1]^n, \, x\in [0,1],
$$
where $\Phi$ is a horseshoe map and  $f_{\widehat x}=f_0$ or $f_1$ for some injective maps $f_0$ and $f_1$ of  $[0,1]$. We observe that for our analysis we require only $C^1$ smoothness, that is weaker than the often required $C^{1+\varepsilon}$ hypothesis, and we base our proofs on a tempered distortion argument. Any step skew-product diffeomorphism with $C^\infty$ fiber maps $f_0$, $f_1$ which satisfy the below stated properties will provide an example for Theorem~\ref{t.bonyexample}.

Concerning the structure of spines, we have that non-trivial spines are tangent to the central direction $E^c$ and that spines of saddles of index $u+1$ are non-trivial and dense in $\Lambda_F$.
We also observe that, given any periodic sequence $\xi\in\Sigma_2$, there is a periodic point $P_\xi$ in $\varpi^{-1}(\xi)$. Under some mild additional Kupka Smale-like hypothesis, the spine of any saddle of index $u+1$ also contains saddles of index $u$. In fact, in this case, for every periodic sequence $\xi$ there is a saddle $S\in \Lambda_F$ of index $u$ projecting to $\xi$, $\varpi(S)=\xi$, (see Theorem~\ref{t.KS}).

\begin{ques} \label{q.allinterval}
    Do there exist examples of porcupine-like transitive
    sets such that $\varpi^{-1}(\xi)$ contains a continuum for an ``even larger'' subset  of $\Si_2$? Here larger could mean, for instance, a residual subset of $\Sigma_2$
    or a set of large dimension and we would like to state this question in a quite vague sense.
\end{ques}

Let us observe that the examples in~\cite{Kud:10} the set with non-trivial fibers is ``small'', though his setting is slightly different from ours.

\begin{figure}
\includegraphics[height=4cm]{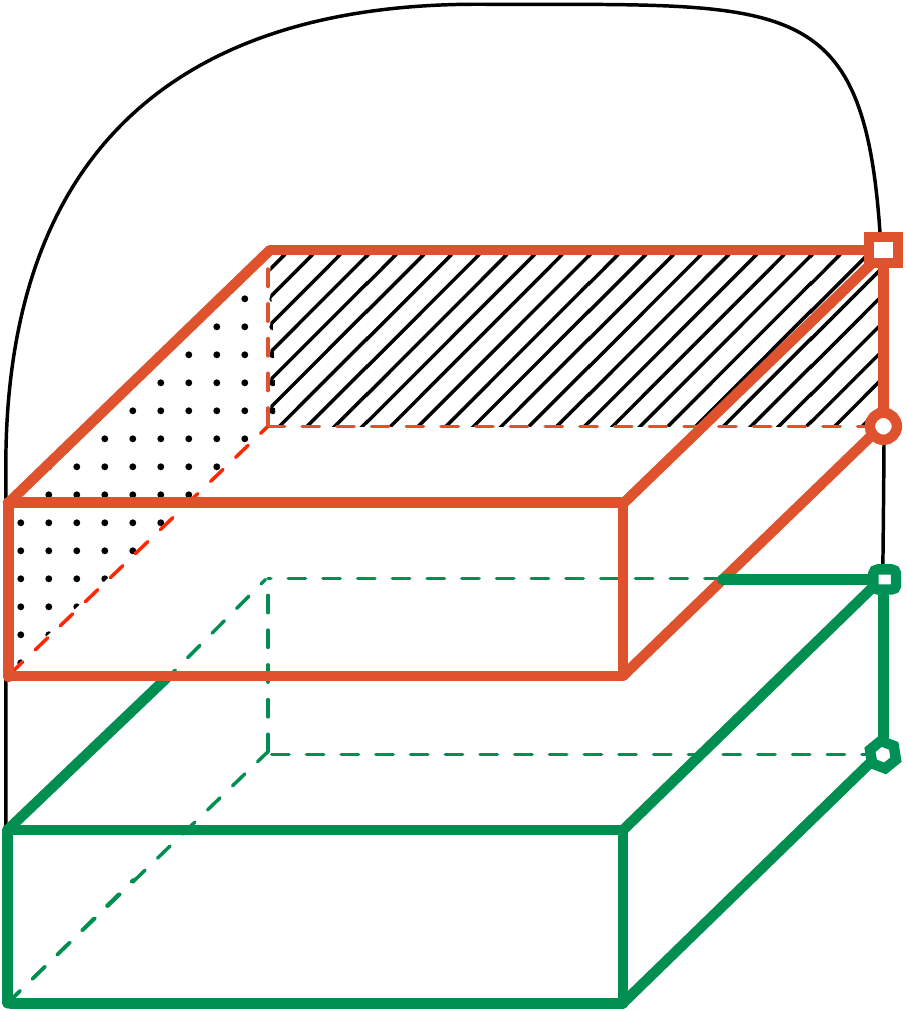}
$\displaystyle \overset{F}{\longrightarrow}$
\includegraphics[height=4.2cm]{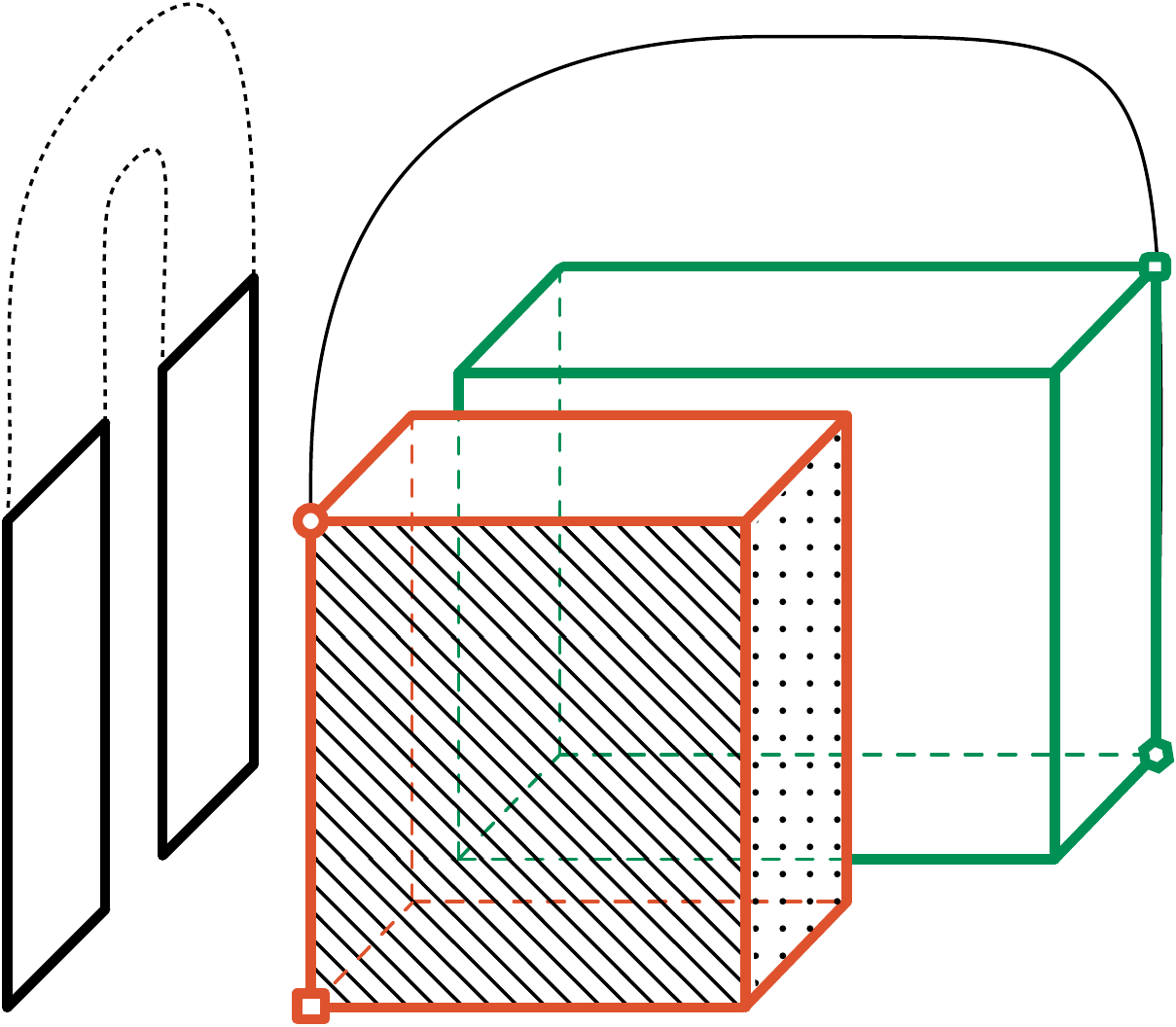}
\caption{Construction of a porcupine}
\label{fig.boxes}
\end{figure}

This paper is organized as follows. In Section~\ref{s.example} we describe the construction of our examples and derive first preliminary properties. In Section~\ref{s.onedim} we collect properties of the IFS generated by the interval maps $f_0$ and $f_1$.
In Section~\ref{s.tranhomint} we prove that the porcupine is a (relative) homoclinic class of a saddle of index $u+1$ and that it contains a non-trivial homoclinic class of a saddle of index $u$. This fact implies the porcupine is a transitive non-hyperbolic set containing infinitely many saddles of both types of indices. In this section we will systematically use the results in Section~\ref{s.onedim}. The skew-product structure allows us to translate properties of the IFS to the global dynamics. We will also study some particular cases that imply stronger properties.
In Section~\ref{sec:lyap} we finally study the Lyapunov exponents that are associated to the central direction. Note that our methods of proof in Sections~\ref{s.tranhomint} and~\ref{sec:lyap} are based on those used previously in studying heterodimensional cycles and homoclinic classes, see
for example~\cite{Dia:95,AbdBonCroDiaWen:07,BonDia:08,DiaHorRioSam:09}.
We conclude the proof of Theorem~\ref{t.bonyexample} in Section~\ref{sec:ptheo}.

\section{Examples of porcupine-like homoclinic classes} \label{s.example}

In this section we are going to construct examples of porcupine-like homoclinic classes satisfying the properties claimed in Theorem~\ref{t.bonyexample}.

Consider $s$, $u\in \NN$, the cube $\widehat \CC=[0,1]^{s+u}$, and a diffeomorphism $\Phi$ defined on $\RR^{s+u}$ having a horseshoe $\Ga$ in $\widehat\CC$ conjugate to the full shift $\sigma$ of two symbols and whose stable bundle has dimension $s$ and whose unstable bundle has dimension $u$.
Denote by  $\varpi\colon \Ga \to \Si_2$ the conjugation map
$\varpi \circ \Phi=\sigma \circ \varpi$. We consider the sub-cubes
$\widehat \CC_0$ and $\widehat \CC_1$ of $\widehat \CC$ such that
$\Phi$ maps each cube $\widehat \CC_i$ in a Markovian way into
$\widehat \CC$, where the cube $\widehat \CC_i$ contains all the
points $X$ of the horseshoe whose $0$-coordinate $(\varpi (X))_0$
is $i$. In order to produce the simplest possible example, we will
assume that $\Phi$ is affine in $\widehat\CC_0$ and
$\widehat\CC_1$.

\begin{defi}[The map $F$]\label{d.defF}
{\rm Let $\CC= \widehat \CC\times [0,1]$. Given a point $X\in \CC$,
we write $X=(\widehat x,x)$, where $\widehat x\in \widehat \CC$ and
$x\in [0,1]$. We consider a map
$$
F\colon \widehat \CC \times [0,1]\to \RR^{s+u}\times \RR
$$
given by
$$
F(\widehat x,x) \eqdef
 \begin{cases}
    (\Phi(\widehat x), f_0(x)) & \mbox{ if }X\in \widehat \CC_0\times [0,1],\\
    (\Phi(\widehat x), f_1(x)) & \mbox{ if }X\in \widehat\CC_1\times [0,1],
\end{cases}
$$
where $f_0$, $f_1 \colon [0,1]\to [0,1]$ are assumed to be $C^1$
injective interval maps satisfying the following properties (see Figure~\ref{fig.ifs}):
\begin{itemize}
\item[(F0.i)]
    The map $f_0$  is increasing and has exactly two hyperbolic fixed
    points, the point $0$ (repelling) and the point $1$ (attracting).
    Let $f_0^\prime (0)=\be>1$ and $f_0^\prime (1)= \la \in
    (0,1)$.
    Moreover, $\la\le f_0^\prime(x)\le\beta$ for all $x \in [0,1]$.
    \\[-0.3cm]
\item[(F0.ii)]
    There are fundamental domains $I_0=[a_0,b_0]\subset(0,1)$, $b_0=f_0(a_0)$, and $I_1=[a_1,b_1]$, $b_1=f_0(a_1)$,  of the map $f_0$ together with numbers $\alpha>1$ and $N\ge 1$ such that
    \[
    f_0^N(I_0)=I_1
    \quad\text{ and }\quad
    \la\cdot (f_0^N)^\prime (x)>\alpha>1
    \quad\text{ for all }x\in I_0.
    \]
    Moreover, $f_0$ is expanding in $[0,b_0]$ and contracting in $[a_1,1]$.
    \\[-0.3cm]
\item[(F1.i)]
    The map $f_1$ is a decreasing contraction satisfying
    \[
    \gamma' \eqdef \min\left\{ \lvert f_1^\prime(x)\rvert \colon x\in [0,1]\right\}\le
    \gamma\eqdef \max\left\{ \lvert f_1^\prime(x)\rvert \colon x\in [0,1]\right\}<1.
    \]
\item[(F1.ii)]
    We have
    \[
     |f_1^\prime(x)|\ge \overline\alpha >1/\alpha
     \quad\text{ for all }
     x\in [f_1^2(a_1),a_1].
     \]
\item[(F01)]
    The following conditions are satisfied
    \begin{enumerate}
    \item $f_1(1)=0$,
    \item $ f_1([a_1,1])\subset[0,a_0)$.
    \item $[0,f_0^{-2}(b_0))\subset f_1([0,1])$.
    \end{enumerate}
\end{itemize}
}\end{defi}

\begin{figure}[h]
\begin{minipage}[c]{\linewidth}
\centering
\begin{overpic}[scale=.35]{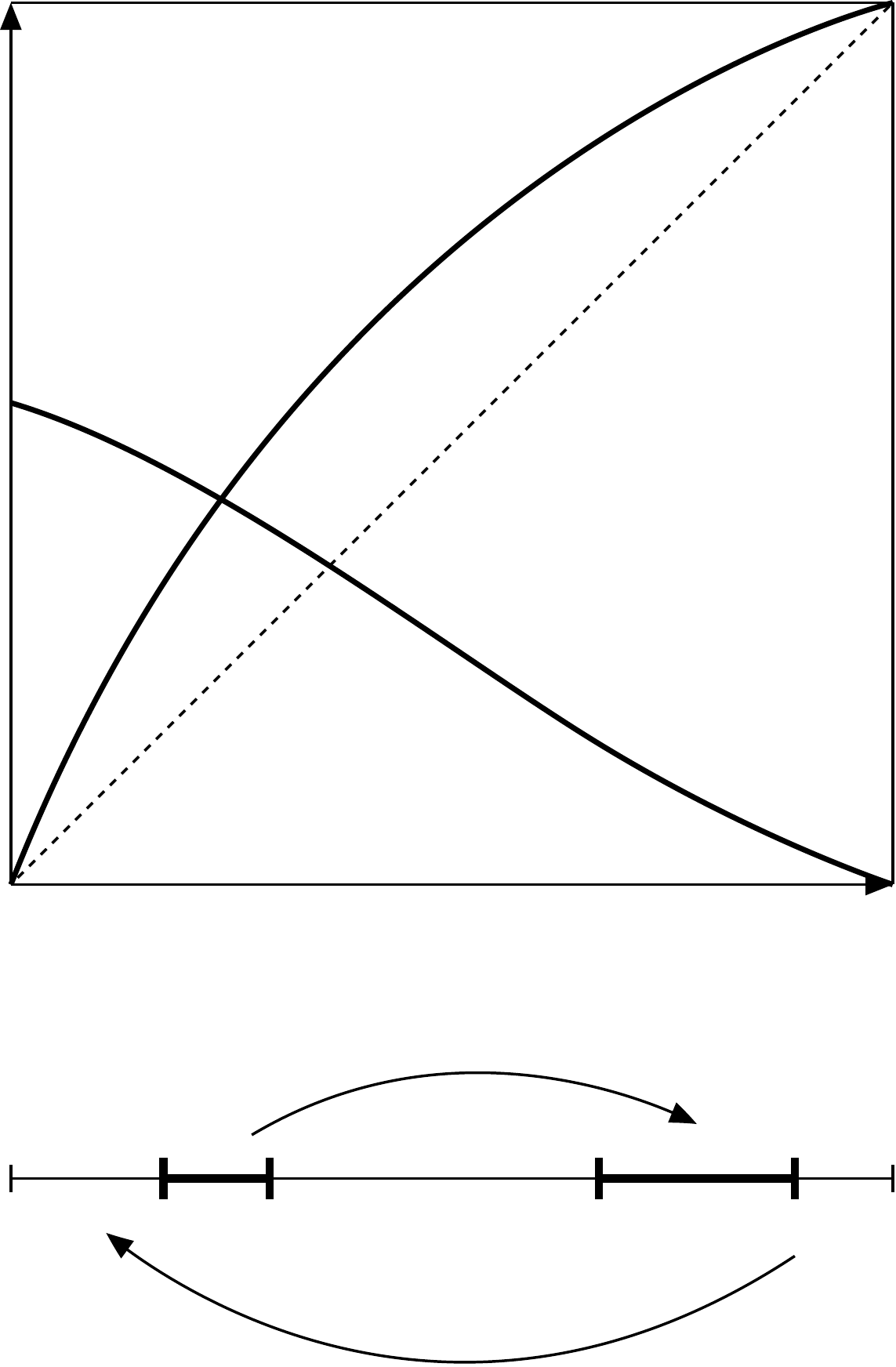}
    \put(30,86){$f_0$}	
     \put(38,52){$f_1$}
      \put(31,3){$f_1$}	
      \put(31,25){$f_0^N$}	
    \put(13,18){$I_0$}	
    \put(53,18){$I_1$}
    \put(0,29){$0$}
    \put(64,29){$1$}
    \end{overpic}
\end{minipage}
\caption{Iterated function system satisfying {(F0)}, {(F1)}, and~{(F01)}}
\label{fig.ifs}
\end{figure}

Note that in order to get the conditions above we need to require that
$$
\la\, \frac{1-\la}{1-\be^{-1}}>1.
$$

The \emph{maximal invariant set} of $F$ in the cube $\CC$ is defined by
\begin{equation}\label{e.Lambdapm}
    \La_F\eqdef\La_F^+\cap\La_F^-, \quad\text{ where }\quad
    \La_F^\pm\eqdef\bigcap_{i\in \NN} F^{\pm i}(\CC).
\end{equation}

\begin{rema}{\rm
	We point out that we restrict our analysis to the dynamics within the cube $\CC$. Notice that the usual definition of a (locally) maximal invariant set $\Lambda$ with respect to $F$ requires that $F$ is well-defined in some neighborhood $U$ of $\Lambda$ and that $\Lambda =\bigcap_{i\in\ZZ}F^i(U)$. Observe that in our case we can consider an extension of the local diffeomorphism $F$ to some neighborhood of $\CC$ such that $\Lambda_F$ is the locally maximal invariant set with respect to such an extension. Indeed, this can be done since the extremal points $P$ and $Q$ are hyperbolic.
	
	From now on we restrict our considerations to the dynamics in $\CC$. In particular, we consider relative homoclinic classes in $\CC$. For notational simplicity, we suppress  the dependence on $\CC$ and simply write $H(R,F)$.
}\end{rema}

Note that, by construction, for any saddle $ Q^\ast\in\CC$ the homoclinic class $H( Q^\ast,F)$ is contained in $\Lambda_F$ but, in principle, may be different from $\Lambda_F$. The analysis of the dynamics of $F|_{\Lambda_F}$ will be completed in Section~\ref{s.tranhomint}.
\smallskip

For simplicity, let us assume that the rate of expansion of the
horseshoe is stronger than any expansion of $f_0$ and $f_1$, that
is, in particular, stronger than $\beta$ and let us assume that
the rate of contraction of the horseshoe is stronger than any
contraction of $f_0$ and $f_1$, that is, in particular, stronger
than $\min\{\lambda,\gamma'\}$.
 In this way the $DF$-invariant splitting
$E^\sst\oplus E^c\oplus E^\uut$ defined over $\Lambda_F$ and given
by
\begin{equation}\label{e.splitt}
    E^\sst\eqdef\RR^s\times\{0^u,0\}, \quad
    E^c\eqdef\{0^s,0^u\}\times\RR, \quad
    E^\uut\eqdef\{0^s\}\times\RR^u\times\{0\}
\end{equation}
is dominated. Note that this splitting is $DF$-invariant because
of the skew-product structure of $F$.

The following is a key result in our constructions. Its proof will be completed in
Section~\ref{sec:ptheo}.

\begin{prop}\label{pro:spines}
    There is a periodic point $ Q^\ast\in \Lambda_F$ of index $u+1$ whose homoclinic class $H( Q^\ast,F)$ is a porcupine-like set having all the properties claimed in Theorem~\ref{t.bonyexample}.
\end{prop}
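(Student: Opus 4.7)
The plan is to exploit the skew-product structure of $F$ so as to lift the one-dimensional results of Theorem~\ref{theorem0} (and further IFS properties to be established in Section~\ref{s.onedim}) to the higher-dimensional setting. The natural projection $\varpi\colon\La_F\to\Si_2$, obtained by composing projection onto $\widehat\CC$ with the horseshoe coding $\Ga\to\Si_2$, is a semiconjugation $\sigma\circ\varpi=\varpi\circ F$. Because the splitting $E^\sst\oplus E^c\oplus E^\uut$ of~\eqref{e.splitt} is $DF$-invariant and dominated, the central Lyapunov exponent of a Lyapunov regular point $(\widehat x,x)\in\La_F$ coincides with the fiber IFS Lyapunov exponent $\chi(x,\varpi(\widehat x))$, and the fiber $\varpi^{-1}(\xi)\cap\La_F$ is, up to the local product structure of the horseshoe, the admissible domain $I_\xi$ of the IFS.

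For the candidate saddle I would pick a periodic sequence $\xi^\ast\in\Si_2$ for which the fiber composition $f_{[\xi^\ast_0\cdots\xi^\ast_{m-1}]}$ has an expanding hyperbolic fixed point $x^\ast$ in the interior of $(q,p)$; the existence of such sequences will follow from the transitivity statement of Theorem~\ref{theorem0}(B) together with a standard perturbation to obtain hyperbolicity. The associated $F$-periodic point $\bQ\eqdef(\widehat x^\ast,x^\ast)$ has index $u+1$ by domination. Property (A) of Theorem~\ref{t.bonyexample} then follows fiber by fiber from Theorem~\ref{theorem0}(A). Property (B) is obtained by lifting periodic points of the IFS: expanding compositions produce the dense set of saddles of index $u+1$, while iteration of contractions near $p$ yields infinitely many saddles of index $u$. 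Property (C) is an immediate consequence of the identification of the central and IFS exponents together with Theorem~\ref{theorem0}(C); the non-differentiability of the pressure function is a convex-analysis consequence proved separately in Proposition~\ref{p.phasetrans}.

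For part (D), I would take $U$ a small neighborhood of $\CC$ in which $\La_F$ remains the locally maximal invariant set, a property essentially built into the geometric construction via~\eqref{e.Lambdapm}. The saddle $Q$ with $H_U(Q,F)=\{Q\}$ will correspond to the expanding fixed point $q$ of $f_0$: the $(q,p)$-cycle condition $f_1(p)=q$, combined with the fact that $f_1([a_1,1])\subset[0,a_0)$, forces every orbit through $Q=(\widehat q,q)$ with a non-trivial fiber displacement to leave the cube $\CC$, so that no transverse homoclinic intersections of $Q$ survive in $U$. The embedded non-trivial homoclinic class of a saddle of index $u$ is constructed around the contracting fixed point $P=(\widehat q,p)$, using the uniformly hyperbolic sub-IFS supported in a neighborhood of $p$ bounded away from $q$. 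The main obstacle, and the heart of the argument, is establishing the equality $H(\bQ,F)=\La_F$: the inclusion $\subset$ is automatic, but the reverse inclusion requires showing that $W^{\rm u}(\bQ)$ and $W^{\rm s}(\bQ)$ have transverse intersections densely in $\La_F$. This combines the hyperbolic product structure of the horseshoe factor $\Phi$ with a blender-like mechanism in the fiber driven by the cycle condition together with the transitivity statement in Theorem~\ref{theorem0}(B); the detailed implementation is the content of Sections~\ref{s.tranhomint} and~\ref{sec:lyap}.
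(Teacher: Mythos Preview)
Your outline is correct and follows the paper's approach closely: lift the IFS results via the skew-product semiconjugation, construct $\bQ$ from an expanding fiber periodic point, and reduce the hard part to showing $H(\bQ,F)=\La_F$ via dense transverse homoclinic intersections. Two clarifications are worth making. First, the paper does not obtain $\bQ$ by ``transitivity plus perturbation'': it builds $\bQ$ explicitly via the expanding-itinerary machinery of Section~\ref{ss.expanding} (Lemma~\ref{l.newfixedexpandingpoint} applied to $I_0$), and this same mechanism---iterated expanded successors that eventually cover a fixed fundamental domain (Lemma~\ref{l.successor})---is what drives Proposition~\ref{p.delta+deltaepsilon} and hence the equality $H(\bQ,F)=\La_F$; the genuine blender argument appears only under the extra hypothesis~(F${}_{\rm B}$) in Theorem~\ref{t.blenders}. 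Second, your explanation of $H_U(Q,F)=\{Q\}$ is not quite right: no orbit through $Q$ leaves $\CC$ (indeed $\La_F$ is invariant in $\CC$); rather, $W^\st_\loc(Q,F)$ sits at fiber level $0$ and simply fails to meet $W^\ut(Q,F)\setminus\{Q\}$, as stated in Lemma~\ref{l.hq}.
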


Let us now introduce some more notation and derive some simple
properties that can be obtained from the above definitions.

\begin{nota}\label{n.periodic}
{\rm Let us consider the sequence space $\Sigma_2=\{0,1\}^\ZZ$ and
adopt it with the usual metric
$d(\xi,\eta)=\sum_{i\in\ZZ}2^{-\lvert
i\rvert}\lvert\xi_i-\eta_i\rvert$ for
$\xi=(\ldots\xi_{-1}.\xi_0\xi_1\ldots)$,
$\eta=(\ldots\eta_{-1}.\eta_0\eta_1\ldots)\in\Sigma_2$. We denote
by $\xi=(\xi_0\ldots\xi_{m-1})^\ZZ$ the
 periodic sequence of period $m$ such that
$\xi_i=\xi_{i+m}$ for all $i$. We will always refer to the least
period of a sequence.
The zero sequence with $\xi_i=0$ for all $i$ we denote by
$0^\ZZ$. Further, we denote by $\xi=(0^\NN. 10^\NN)$ the sequence
that satisfies $\xi_0=1$, $\xi_{\pm i}=0$ for all $i\ne 0$. }
\end{nota}

Let $\theta=\varpi^{-1}(0^\ZZ)$ be the fixed point of $\Phi$ which
corresponds to the zero sequence $0^\ZZ$. Note that $\theta=0^{s+u}$.
Simplifying representation, we also assume that $[0,1]^s\times \{0^u\}=W^\st_\loc
(\theta,\Phi)$ and $\{0^s\}\times [0,1]^u=W^\ut_\loc
(\theta,\Phi)$.
Let us define
\begin{equation}\label{e.defPQ}
    P\eqdef(\theta,1)\quad\text{ and }\quad
    Q\eqdef(\theta,0).
\end{equation}
These saddles have indices $u$ and $u+1$, respectively. The
previous assumptions and the choice of $f_0$ imply immediately
that
\begin{equation}\label{e.manifolds}
\begin{split}
    [0,1]^s\times \{0^u\} \times (0,1]\subset W^\st (P,F)
    ,\\
    \{0^s\} \times [0,1]^u \times \{1\} \subset W^\ut  (P,F)
    ,\\
    [0,1]^s\times \{0^u\} \times \{0\} \subset W^\st (Q,F)
    ,\\
    \{0^s\}\times [0,1]^u\times  [0,1) \subset W^\ut (Q,F).
\end{split}
\end{equation}
In what follows we write
\[
	W^\st_\loc (Q,F)= [0,1]^s \times \{(0^s,0)\}
	\quad\mbox{and} \quad
	W^\ut_\loc (P,F)= \{0^s\} \times [0,1]^u\times \{1\} .
\]

\begin{defi}[Heterodimensional cycle]\label{def:hetcyc}
{\rm A diffeomorphism $F$ is said to have a
\emph{heterodimensional cycle} associated to saddle points $P$ and
$Q$ of different indices if their invariant manifolds
\emph{intersect cyclically}, that is, if $W^\st(P,F)\cap
W^\ut(Q,F)\ne\emptyset$ and $W^\ut(P,F)\cap
W^\st(Q,F)\ne\emptyset$. Here we denote by $W^\st(P,F)$
($W^\ut(P,F)$) the stable (unstable) manifold of the orbit of $P$
with respect to $F$. }
\end{defi}

The definition of $F$ immediately implies the following fact.

\begin{lemm}[Heterodimensional cycle]\label{r.periodicpoints}
    The points $P$ and $Q$ defined in~\eqref{e.defPQ} are saddle
    fixed points with indices $u$ and $u+1$, respectively,
    that are related by a heterodimensional cycle.
\end{lemm}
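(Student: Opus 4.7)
The plan is to verify in turn that $P$ and $Q$ are fixed by $F$, that their indices are $u$ and $u+1$, and that the two cyclic intersections of invariant manifolds required for a heterodimensional cycle can be realized.

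\textbf{Fixed points and indices.} Since $\theta=0^{s+u}$ is the $\Phi$-fixed point corresponding to the zero sequence $0^\ZZ$, it lies in $\widehat\CC_0$, so $F(\theta,x)=(\Phi(\theta),f_0(x))=(\theta,f_0(x))$. Combined with $f_0(0)=0$ and $f_0(1)=1$ from (F0.i), this gives $F(P)=P$ and $F(Q)=Q$. Using the $DF$-invariant splitting $E^\sst\oplus E^c\oplus E^\uut$ from~\eqref{e.splitt} and the skew-product form of $F$, the restrictions of $DF$ to $E^\sst$ and $E^\uut$ at these points agree with those of $D\Phi$ at $\theta$, contributing $s$ contracting and $u$ expanding directions. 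The central factor $DF|_{E^c}$ is multiplication by $f_0^\prime$ in the fiber variable, giving $\lambda<1$ at $P$ and $\beta>1$ at $Q$; hence $P$ has index $u$ and $Q$ has index $u+1$.

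\textbf{First branch of the cycle.} From~\eqref{e.manifolds} any point $(\theta,t)$ with $t\in(0,1)$ belongs to $[0,1]^s\times\{0^u\}\times(0,1]\subset W^\st(P,F)$ and simultaneously to $\{0^s\}\times[0,1]^u\times[0,1)\subset W^\ut(Q,F)$, so $W^\st(P,F)\cap W^\ut(Q,F)\neq\emptyset$.

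\textbf{Second branch of the cycle.} For $W^\ut(P,F)\cap W^\st(Q,F)$ the plan is to exploit the cycle condition $f_1(1)=0$ from (F01)(1) together with a homoclinic point of $\theta$ in the horseshoe. Take $\xi=(0^\NN.10^\NN)\in\Sigma_2$ and set $\hat z\eqdef\varpi^{-1}(\xi)$. Since the past symbols of $\xi$ are identically zero, $\hat z$ lies in $W^\uut_\loc(\theta,\Phi)$; since $\xi_0=1$, we have $\hat z\in\widehat\CC_1$; and $\Phi^n(\hat z)\to\theta$ as $n\to\pm\infty$. Consider the point $X_0\eqdef(\hat z,1)$. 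For each $n\ge 1$ the iterate $\Phi^{-n}(\hat z)$ has zero $0$-symbol (namely $\xi_{-n}=0$), hence lies in $\widehat\CC_0$, so $F$ acts locally via $f_0$, which fixes the fiber value $1$; therefore $F^{-n}(X_0)=(\Phi^{-n}(\hat z),1)\to(\theta,1)=P$ and $X_0\in W^\uut(P,F)$. Forwards, $F(X_0)=(\Phi(\hat z),f_1(1))=(\Phi(\hat z),0)$ by (F01)(1), and for $n\ge 1$ each $\Phi^n(\hat z)$ remains in $\widehat\CC_0$ while the fiber stays at the $f_0$-fixed point $0$; thus $F^n(X_0)\to(\theta,0)=Q$ and $X_0\in W^\st(Q,F)$.

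\textbf{Main obstacle.} The argument is essentially bookkeeping, and the only conceptual step is the second branch: the cycle-like hypothesis $f_1(1)=0$ is precisely what permits a local unstable-manifold point of $P$ (sitting at fiber $1$) to be thrown, in one $F$-step, onto a local stable-manifold point of $Q$ (sitting at fiber $0$), while a $\Phi$-homoclinic point of $\theta$ takes care of the base direction.
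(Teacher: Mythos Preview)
Your proof is correct and follows essentially the same approach as the paper. Both arguments use~\eqref{e.manifolds} for the branch $W^\st(P,F)\cap W^\ut(Q,F)$ and exploit the cycle condition $f_1(1)=0$ together with the homoclinic point $\varpi^{-1}(0^\NN.10^\NN)$ of $\theta$ for the other branch; the only cosmetic difference is that the paper pushes the whole disk $W^\ut_\loc(P,F)\cap\CC_1$ forward by $F$ and then singles out a point in $W^\st(Q,F)$, whereas you track a single heteroclinic point $X_0$ explicitly in both time directions (and you also spell out the index computation, which the paper leaves implicit).
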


\begin{figure}[h]
\begin{minipage}[c]{\linewidth}
\centering
\begin{overpic}[scale=.45]{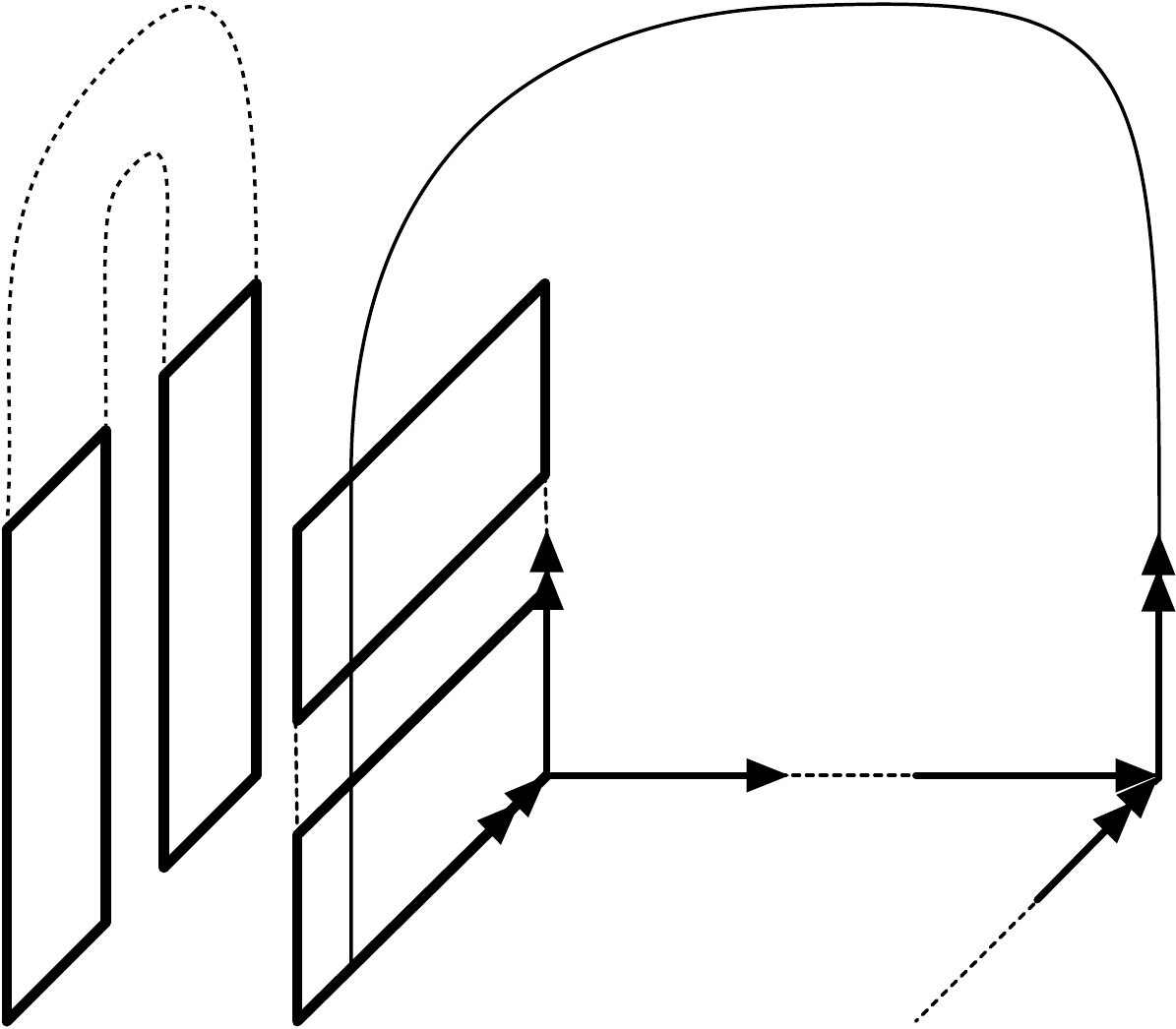}
    \put(37,22){$\widehat\CC_0$}	
    \put(37,47.5){$\widehat\CC_1$}
    \put(49,24.5){$Q$}
    \put(101,25){$P$}
    \end{overpic}
\end{minipage}
\caption{Heterodimensional cycle}
\label{fig.map_cyclic}
\end{figure}

\begin{proof}
    Note that by~\eqref{e.manifolds} we have
    \[
    \{(0^s,0^u)\}\times (0,1)\subset W^\st (P,F) \cap W^\ut (Q,F).
    \]
    On the other hand, as $f_1(1)=0$, we have
    \[\begin{split}
    	F\big(W^u_{\rm loc}(P,F)\big)
	&= F\big(  (\{0^s\} \times [0,1]^u \times \{1\} ) \cap \CC_1\big)\\
	&=\{\theta^s\} \times [0,1]^u  \times \{0\} \subset W^{\ut}(P,F),
    \end{split}\]
    where $\theta^s=\varpi^{-1}(0^\NN.10^\NN)$ and $(\theta^s,0^u,0)\in W^s(Q,F)$,
    and hence $W^s(Q,F)\cap W^u(P,F)\ne \emptyset$. This gives a heterodimensional cycle associated to $P$ and $Q$, proving the lemma.
\end{proof}

We will now derive some properties of the homoclinic class $H(P,F)$.

\begin{lemm}\label{l.hp}
    The homoclinic class $H(P,F)$ contains the saddle $Q$. Therefore, this class is non-trivial and non-hyperbolic.
    Moreover, there are points in $\{(0^s,0^u)\}\times (0,1)$ that are contained in $H(P,F)$.
\end{lemm}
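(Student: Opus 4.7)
The plan is to combine the heterodimensional cycle between $P$ and $Q$ with an inclination-type argument to produce transverse homoclinic points of $P$ accumulating on the central segment joining $Q$ to $P$. From~\eqref{e.manifolds} we read that
\[
\{(0^s,0^u)\}\times(0,1)\subset W^\st(P,F)\cap W^\ut(Q,F),
\]
and this is a \emph{transverse} intersection in $\CC$, since $\dim W^\st(P,F)+\dim W^\ut(Q,F)=(s+1)+(u+1)$ and the tangent spaces at any $y_c=(0^s,0^u,c)$ sum to $E^\st\oplus E^c\oplus E^\uut$. The other branch of the cycle is the $u$-dimensional disk
\[
D_0\eqdef F\big(W^\ut_\loc(P,F)\cap\CC_1\big)=\{\theta^s\}\times[0,1]^u\times\{0\}\subset W^\ut(P,F)
\]
produced in the proof of Lemma~\ref{r.periodicpoints}, which meets $W^\st_\loc(Q,F)=[0,1]^s\times\{0^u\}\times\{0\}$ quasi-transversely at $Y_1=(\theta^s,0^u,0)$, the two tangent spaces summing to $E^\st\oplus E^\uut$ and missing the central direction.

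Next I would invoke the inclination lemma in its form adapted to this quasi-transverse heterodimensional situation, as developed in~\cite{Dia:95}. Since $F^n(Y_1)\to Q$ along the $s$-dimensional stable manifold of $Q$, and $T_{Y_1}D_0=E^\uut(Y_1)$ is aligned with the strong unstable bundle, the forward iterates $F^n(D_0)$ accumulate in the $C^1$ topology on arbitrary $u$-disks inside $W^\ut(Q,F)$. For each $c\in(0,1)$ consider the $u$-disk $K_c\eqdef\{0^s\}\times[0,1]^u\times\{c\}$, which is contained in $W^\ut(Q,F)$ by~\eqref{e.manifolds} and meets $W^\st(P,F)$ transversely in $\CC$ at $y_c=(0^s,0^u,c)$. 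The inclination lemma then provides $n_k\to\infty$ and sub-disks $D_k\subset F^{n_k}(D_0)$ with $D_k\to K_c$ in $C^1$. By persistence of transverse intersections, for $k$ large $D_k$ meets $W^\st(P,F)$ transversely at a point $R_k\to y_c$, and each $R_k$ is a transverse homoclinic point of $P$, hence $R_k\in H(P,F)$.

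Letting $k\to\infty$ and varying $c\in(0,1)$ yields $\{(0^s,0^u)\}\times(0,1)\subset H(P,F)$, and by closedness the endpoint $Q=(\theta,0)$ also belongs to $H(P,F)$. Non-triviality and non-hyperbolicity then follow at once: $H(P,F)$ contains the saddles $P$ and $Q$ of different indices $u$ and $u+1$, which cannot both lie in a uniformly hyperbolic set with a dominated splitting.

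The main obstacle is to execute the inclination step rigorously: here $\dim W^\ut(P,F)+\dim W^\st(Q,F)=s+u$ falls one short of the ambient dimension, so the classical $\lambda$-lemma does not apply verbatim. The key point is that $T_{Y_1}D_0$ already coincides with the strong unstable bundle, so $DF^n$ keeps $TD_0$ aligned with $E^\uut$ along the orbit approaching $Q$; the center expansion at $Q$ then stretches $F^n(D_0)$ in such a way that suitable sub-disks $C^1$-approximate prescribed $u$-disks inside $W^\ut(Q,F)$. This is the standard mechanism underlying the heterodimensional $\lambda$-lemma (see~\cite{Dia:95,BonDia:08}).
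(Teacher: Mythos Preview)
Your inclination step has a genuine gap. The disk $D_0=\{\theta^s\}\times[0,1]^u\times\{0\}$ lies entirely at central height $0$, which is the repelling fixed point of $f_0$. When you iterate $D_0$ forward along the orbit of $Y_1$ (that is, through $\CC_0$ repeatedly, since $Y_1\in W^\st(Q,F)$ has forward itinerary $0^\NN$), the central coordinate stays exactly at $0$: there is no central extent in $D_0$ for the center expansion at $Q$ to act on. Hence the pieces $F^n(D_0\cap\CC_{[0^n]})$ can only $C^1$-approximate the strong unstable leaf $K_0=\{0^s\}\times[0,1]^u\times\{0\}$, never $K_c$ for $c>0$. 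And $K_0$ does not meet $W^\st(P,F)$ at all, since by~\eqref{e.manifolds} the stable manifold of $P$ requires central coordinate in $(0,1]$. So no transverse homoclinic point of $P$ is produced this way. The heterodimensional $\lambda$-lemma you invoke does not rescue this: for the center expansion to spread a $u$-disk across the $(u+1)$-dimensional $W^\ut(Q,F)$, the disk must either be off the invariant central level or carry some central thickness, and $D_0$ has neither.

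The paper avoids this obstacle by an explicit computation using the skew-product structure rather than an abstract inclination argument. It first iterates $D_0$ once more through $\CC_1$, landing at central height $f_1(0)\in(0,1)$; the resulting $u$-disk now meets $[0,1]^s\times\{0^u\}\times(0,1]\subset W^\st(P,F)$ transversely, producing a first homoclinic point of $P$. From this it writes down an explicit sequence of transverse homoclinic points $X_i$ with central coordinates $x_i=f_1(f_0^{N+i}(f_1(0)))\searrow 0$, which accumulate on a point $X\in W^\st(Q,F)$; closedness and invariance of $H(P,F)$ then give $Q\in H(P,F)$. Note finally that the lemma only asserts that \emph{some} points of $\{(0^s,0^u)\}\times(0,1)$ lie in $H(P,F)$; your stronger conclusion $\{(0^s,0^u)\}\times(0,1)\subset H(P,F)$ is neither proved nor needed here.
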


\begin{proof}
    Let $\widehat x=(x^s,0^u)=\varpi^{-1}(0^\NN . 10^\NN)$ and  $\Phi(\widehat x)=(y^s,0^u)$.
    Note that by Definition~\ref{d.defF} we have
    \begin{equation}\label{e.ys}
F \big( (\{x^s\} \times [0,1]^u\times \{0\}) \cap \CC_1 \big)=
    \{y^s\} \times [0,1]^u \times \{f_1(0)\} \subset W^u(P,F)
    \end{equation}
  and therefore, by \eqref{e.manifolds} and since $f_1(0)\in (0,1)$, $(y^s,0^u,f_1(0))$ is a transverse homoclinic point of $P$. This implies that $H(P,F)$ is non-trivial. Moreover, it implies that $W^u(P,F)$ accumulates at $W^u_{\rm loc}(P,F)$ from the left and thus the point $X=(x^s,0^u,0)$ is accumulated by a sequence $X_i\eqdef (x_i^s,0^u,x_i)$, $x_i=f_1(f_0^{N+i}(f_1(0)))>0$, of transverse homoclinic points of $P$ from the right.
  Finally, as $X\in W^s(Q,F)$ and $H(P,F)$ is invariant, we have
    $Q\in H(P,F)$.

    To prove that $\{(0^s,0^u)\}\times (0,1)$ contains points of $H(P,F)$, for each $\delta>0$
    and each (closed) fundamental domain $D$ of $f_0$ in $(0,1)$
consider the disk
\[
    D_\delta\eqdef [0,\delta]^s\times \{0^u\}\times D\subset W^s(P,F).
\]
    Note that for large $j$ the set $F^{-j}(D_\delta)$ contains some point $X_i$. Therefore, $D_\delta$ contains a
    transverse homoclinic point of $P$. As this holds for any $\delta>0$ and since $H(P,F)$ is a closed set, the
set $\{(0^s,0^u)\}\times D$ intersects $H(P,F)$ and the
    claimed property follows.
\end{proof}

Let us consider the attracting fixed point $\widehat p=f_1(\widehat p\,)\in (0,1)$ and denote by $({\widehat p\,}^s,{\widehat p\,}^u)=\varpi^{-1}(1^\ZZ)$ the corresponding fixed point for the horseshoe map $\Phi$. Note that $\widehat
P=({\widehat p\,}^s,{\widehat p\,}^u,\widehat p\,)$ is fixed with respect to $F$ and
has index $u$.

\begin{lemm}\label{l.hatP}
    The saddles $\widehat P$ and $P$ are homoclinically related.
\end{lemm}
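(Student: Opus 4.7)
The plan is to produce two transverse heteroclinic intersections of the invariant manifolds of $P$ and $\widehat P$, one in each direction; since both saddles have index $u$, this yields the homoclinic relation. The candidates are
\[
    X\eqdef(\widehat x_\eta,1),\qquad X'\eqdef(\widehat x_{\eta'},\widehat p),
\]
where $\widehat x_\eta\eqdef\varpi^{-1}(\eta)$, $\widehat x_{\eta'}\eqdef\varpi^{-1}(\eta')$, and $\eta,\eta'\in\Sigma_2$ are the ``switching'' sequences with $\eta_i=0,\,\eta'_i=1$ for $i\le-1$ and $\eta_i=1,\,\eta'_i=0$ for $i\ge 0$. Since $\eta_i=0$ for $i\le-1$, one has $\widehat x_\eta\in W^\ut_\loc(\theta,\Phi)=\{0^s\}\times[0,1]^u$, so by \eqref{e.manifolds} we have $X\in W^\ut_\loc(P,F)$; symmetrically $\widehat x_{\eta'}\in\{\widehat p^s\}\times[0,1]^u$, and since $\widehat p$ is a repelling fixed point of $f_1^{-1}$ one verifies that $W^\ut_\loc(\widehat P,F)=\{\widehat p^s\}\times[0,1]^u\times\{\widehat p\}$, whence $X'\in W^\ut_\loc(\widehat P,F)$.

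Next I would verify that the forward orbits converge to the opposite saddles. For $X$, the symbol $\eta_0=1$ combined with (F01) yields $f_1(1)=0$, and since $\eta_i=1$ for all $i\ge 1$ as well, one obtains $F^n(X)=(\Phi^n(\widehat x_\eta),f_1^{n-1}(0))$ for $n\ge1$; as $n\to\infty$ the base coordinate converges to $(\widehat p^s,\widehat p^u)$, while the fiber coordinate converges to the attracting fixed point $\widehat p$ of the contraction $f_1$. Hence $X\in W^\st(\widehat P,F)$. For $X'$, every forward symbol equals $0$, so $F^n(X')=(\Phi^n(\widehat x_{\eta'}),f_0^n(\widehat p))$; the base converges to $\theta$ and, since $\widehat p\in(0,1)$ and $1$ is the unique attracting fixed point of $f_0$ on $(0,1]$, the fiber converges to $1$. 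Hence $X'\in W^\st(P,F)$.

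Transversality at $X$ and $X'$ will follow from the skew-product form of $DF$, which respects the dominated decomposition $E^\sst\oplus E^c\oplus E^\uut$ of \eqref{e.splitt} globally on $\CC$. The local unstable manifolds of $P$ and $\widehat P$ are flat $u$-dimensional slices inside $E^\uut$, so $T_XW^\ut(P,F)=E^\uut=T_{X'}W^\ut(\widehat P,F)$. On the other hand, the $(s+1)$-dimensional stable manifolds contain both $E^\sst$ (any base-stable perturbation is contracted by $\Phi$) and $E^c$ (since $E^c$ is contracting at both $P$ and $\widehat P$, small fiber perturbations remain in the respective basin under forward iteration). Therefore $T_XW^\ut(P,F)+T_XW^\st(\widehat P,F)=E^\uut\oplus E^\sst\oplus E^c=T_X\CC$, and analogously at $X'$, giving the required transverse heteroclinic intersections and establishing Definition~\ref{def:hetcyc}. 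The main subtlety to check is the fiber-direction inclusion in $T_XW^\st(\widehat P,F)$: since $X$ sits on the boundary $x=1$, one needs the extension of $F$ to a neighborhood of $\CC$ (and the inequality $\lvert f_1'(1)\rvert<1$) in order to realise $W^\st(\widehat P,F)$ as a genuine smooth submanifold through $X$ with a two-sided fiber tangent direction.
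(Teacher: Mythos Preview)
Your argument is correct. Your point $X'$ is in fact the very same heteroclinic point the paper uses for the direction $W^\ut(\widehat P,F)\pitchfork W^\st(P,F)$: in the affine horseshoe model, $\varpi^{-1}(\eta')=(\widehat p^{\,s},0^u)$, so $X'=(\widehat p^{\,s},0^u,\widehat p)$, and the paper obtains this intersection directly from the inclusions $\{\widehat p^{\,s}\}\times[0,1]^u\times\{\widehat p\}\subset W^\ut(\widehat P,F)$ and $[0,1]^s\times\{0^u\}\times(0,1]\subset W^\st(P,F)$ in~\eqref{e.manifolds}.

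The difference lies in the other direction. The paper does not use your boundary point $X=(0^s,\widehat p^{\,u},1)$; instead it uses the interior disk $\{y^s\}\times[0,1]^u\times\{f_1(0)\}\subset W^\ut(P,F)$ already obtained in~\eqref{e.ys} (one forward iterate of $W^\ut_\loc(P,F)$ through $\CC_1$), and intersects it with the full $(s+1)$-plane $[0,1]^s\times\{\widehat p^{\,u}\}\times[0,1]\subset W^\st(\widehat P,F)$, which holds because $[0,1]\subset W^\st(\widehat p,f_1)$. This sidesteps the boundary issue you flag entirely. Your fix via the extension of $F$ is legitimate (the paper explicitly allows such an extension), but you could also avoid it internally: simply replace $X$ by $F^2(X)=(\Phi^2(\widehat x_\eta),f_1(0))$, which lies in the interior since $f_1(0)\in(0,1)$, and the transversality argument goes through without caveat. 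Either way your route and the paper's coincide up to this cosmetic choice of which iterate of $W^\ut_\loc(P,F)$ to sit on.
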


\begin{proof}
    Let $\widehat P=({\widehat p\,}^s,{\widehat p\,}^u,\widehat p\,)$ and note that
    ${\widehat p\,}^s\times [0,1]^u\times \widehat p\subset W^u(\widehat P,F)$.
    Thus, together with $\widehat p\in(0,1)$ and $[0,1]^s\times\{0^u\}\times(0,1]\subset W^s(P,F)$,
    we obtain that $W^u(\widehat P,F)$ and $W^s(P,F)$ meet transversally.

    Let us now show that $W^s(\widehat P,F)$ and $W^u(P,F)$ also meet transversally.
    First note that $[0,1]\subset W^s(\widehat p,f_1)$ and therefore
    $[0,1]^s\times \{{\widehat p\,}^u\}\times [0,1]\subset W^s(\widehat P,F)$.
    Also note that by \eqref{e.ys}
we have that $\{y^s\}\times [0,1]^u\times\{f_1(0)\}\subset
W^u(P,F)$, where $f_1(0)$ in $(0,1)$.
    Thus $W^s(\widehat P,F)\pitchfork W^u(P,F)\ne\emptyset$, proving the assertion.
\end{proof}

\begin{rema}\label{r.barP}{\rm
    The construction in the proof of Lemma~\ref{l.hatP} shows also that any saddle
    $\overline P\in\Lambda_F$ of $F$ of index $u$ satisfies
    $W^u(\overline P,F)\pitchfork W^s(P,F)\ne\emptyset$ and $W^u(\overline P,F)\pitchfork W^s(\widehat P,F)\ne\emptyset$
}\end{rema}

As, by construction, $W^s_{\rm loc}(Q,F)$ does not intersect
$W^u(Q,F)\setminus\{Q\}$, we immediately obtain the following
fact.

\begin{lemm}\label{l.hq}
    We have $H(Q,F)=\{Q\}$.
\end{lemm}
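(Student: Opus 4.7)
The plan is to exploit the observation made in the paragraph preceding the lemma, namely that
\[
	W^s_{\rm loc}(Q,F)\cap\bigl(W^u(Q,F)\setminus\{Q\}\bigr)=\emptyset,
\]
together with the fact that, inside $\Lambda_F$, the saddle $Q$ has no preimages other than itself. Combined, these two ingredients will force every transverse homoclinic point of $Q$ to coincide with $Q$.

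First I would verify the standing observation. Since
\[
	W^s_{\rm loc}(Q,F)=[0,1]^s\times\{0^u\}\times\{0\},
\]
any point in it that also lies in $W^u(Q,F)$ admits a backward orbit $(X_{-n},x_{-n})$ converging to $Q$, in particular $x_{-n}\to 0$. In $[0,1]$ the only preimages of $0$ under the two fiber maps are $0$ (via $f_0$) and $1$ (via $f_1$); moreover $1$ has only $1$ as a preimage under $f_0$ and no preimage at all under $f_1$, since $f_1(0)\le\gamma<1$ by (F1.i) together with $f_1(1)=0$. Hence the only admissible backward orbit with $x_{-n}\to 0$ is the constant orbit $x_{-n}\equiv 0$, which forces the corresponding symbolic coordinates $\xi_{-n}=0$ for every $n\ge 1$. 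Since $X$ also lies in $W^s_{\rm loc}(\theta,\Phi)=[0,1]^s\times\{0^u\}$, its forward symbols satisfy $\xi_i=0$ for $i\ge 0$; so the full symbolic sequence is $0^\ZZ$, $X=\theta$, and the point equals $Q$.

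Next I would observe that $F^{-m}(Q)\cap\Lambda_F=\{Q\}$ for every $m\ge 0$. Indeed, any preimage $(Y,y)\in\Lambda_F$ of $Q$ satisfies $Y\in\Gamma$ and $\Phi(Y)=\theta$, and the horseshoe conjugacy forces $Y=\theta$ since $0^\ZZ$ has a unique $\sigma$-preimage; the equation $f_0(y)=0$ then gives $y=0$, so $(Y,y)=Q$. The claim follows by induction.

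Finally I would close the argument. For any transverse homoclinic point $Z\in W^s(Q,F)\pitchfork W^u(Q,F)$, the forward orbit $F^m(Z)$ enters $W^s_{\rm loc}(Q,F)$ for all sufficiently large $m$, while remaining in $W^u(Q,F)$ by forward invariance of the unstable manifold. The first step gives $F^m(Z)=Q$, and the second step then gives $Z=Q$. Taking the closure and recalling that $Q$ itself always belongs to $H(Q,F)$ yields $H(Q,F)=\{Q\}$. The only point with any subtlety is the backward-orbit analysis in step one, but once one invokes $f_1(0)\le\gamma<1$ the reasoning is purely combinatorial, so no genuine obstacle arises.
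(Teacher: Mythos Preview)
Your proposal is correct and follows exactly the approach the paper indicates: the paper does not give a detailed proof but simply notes in the sentence preceding the lemma that $W^s_{\rm loc}(Q,F)$ does not meet $W^u(Q,F)\setminus\{Q\}$, and you have carefully unpacked why that observation, together with injectivity of $F$ on $\CC_0\cup\CC_1$, forces every transverse homoclinic point of $Q$ to equal $Q$. Your backward-orbit analysis in the fiber (ruling out the branch through $1$ because $f_1(0)\le\gamma<1$ prevents return to $0$) is precisely the verification the paper leaves implicit.
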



\section{One-dimensional central dynamics}\label{s.onedim}

In this section we are going to derive properties of the abstract
iterated function system generated by the interval maps $f_0$ and
$f_1$ introduced in Section~\ref{s.example}. These properties will
carry over immediately to corresponding properties of the spines.
We point out that, in contrast to other commonly studied IFSs, in
our case the system is genuinely non-contracting and, in
particular, in Section~\ref{ss.expanding} we will study \emph{expanding
itineraries} of theses IFSs.

\subsection{Iterated function system}\label{ss.onedim.ifs}

Let us start with some notations.

\begin{nota}\label{n.cylinders}
{\rm
    Slightly abusing notation, for a given \emph{finite} sequence
    $\xi=(\xi_0\ldots \xi_m)$, $\xi_i\in\{0,1\}$, let
    $$
    f_{[\xi]}=
    f_{[\xi_0\ldots\xi_m]}\eqdef f_{\xi_m} \circ
    \cdots \circ f_{\xi_1}\circ f_{\xi_0} \colon [0,1]\to [0,1].
    $$
    Moreover, let
    $$
    \CC_{[\xi]}=
    \CC_{[\xi_0\dots \xi_m]} \eqdef \left\{X\in \CC
    \colon F^i(X)\in \CC_{\xi_i}\text{ for all }
    i=0,\dots,m\right\}.
    $$
    Given any set $K\subset \CC$, let
    $$
    K_{[\xi]}=
    K_{[\xi_0\dots\xi_m]}
    \eqdef K\cap \CC_{[\xi_0\dots\xi_m]}.
    $$
    Given a finite sequence $\xi=(\xi_{-m}\ldots\xi_{-1})$,  $\xi_i\in\{0,1\}$, we denote
    $$
    f_{[\xi.]}=
    f_{[\xi_{-m}\ldots\xi_{-1}.]}
    \eqdef   (f_{\xi_{-1}}\circ \ldots\circ f_{\xi_{-m}})^{-1}.
    $$
    Given a finite sequence $\xi=(\xi_{-m}\ldots\xi_{-1}.\xi_0\ldots \xi_n)$, $\xi_i\in\{0,1\}$, let
    $$
    f_{[\xi]}=
    f_{[\xi_{-m}\ldots\xi_{-1}.\xi_0\ldots\xi_n]}
    \eqdef  f_{[\xi_0\ldots\xi_n]} \circ f_{[\xi_{-m}\ldots\xi_{-1}.]}.
    $$
    Note that these maps are only defined on a closed subinterval of $[0,1]$.
    A sequence $\xi=(\ldots\xi_{-1}.\xi_0\xi_1\ldots)\in\Sigma_2$ is said to be
    \emph{admissible} for a point $x\in[0,1]$ if the map $f_{[\xi_{-m}\ldots\xi_{-1}.]}$
    is well-defined at $x$ for all $m\ge 1$.
    Note that admissibility of a sequence $\xi$ does not depend on the symbols $(\xi_0\xi_1\ldots)$.
}
\end{nota}

\subsection{Expanding itineraries} \label{ss.expanding}

We now start investigating  expanding behavior of the
iterated function system.

Recall that $I_0=[a_0,b_0]=[f_0^{-1}(b_0),b_0]$. Given a closed
interval $J\subset [f_0^{-2}(b_0),b_0]$, we start by localizing an itinerary for which the iterated function system is expanding. In what follows we will always assume that the closed intervals are non-trivial. Recall the definition of $a_1$ in {(F0.ii)} and let us define
\[
    n(J)\eqdef \min\left\{ n\ge 1 \colon f_0^n(J)\subset [a_1,1)\right\}.
\]
Now let $J^\prime \eqdef f_{[0^{n(J)}1]}(J)$ and observe that by
(2) in {(F01)} this interval is contained in $(0,a_0)$.  Let
\[
    m(J)\eqdef \min\left\{ m\ge 0 \colon f_0^m(J^\prime)\cap I_0
        \ne \emptyset\right\}.
\]
Note that, by our choice of fundamental domains, we have $m(J)\ge
1$ and either $n(J)=N$ or $N+1$ with $N$ given in {(F0.ii)}.

\begin{lemm}[Expanding itineraries]\label{l.expanding}
     There is a constant $\kappa>1$ such that for every closed interval $J\subset [f_0^{-2}(b_0),b_0]$
     and every $x\in J$ we have
     $$
        \big\lvert\big(f_{[0^{n(J)}1\,0^{m(J)}]}\big)'(x) \big\rvert\ge \kappa.
    $$
\end{lemm}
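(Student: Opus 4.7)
The plan is to apply the chain rule to the composition
\[
f_{[0^{n(J)}1\,0^{m(J)}]} = f_0^{m(J)} \circ f_1 \circ f_0^{n(J)}
\]
and to bound each of the three resulting factors uniformly from below. For $x \in J$, writing $y \eqdef f_0^{n(J)}(x)$ and $z \eqdef f_1(y)$, the chain rule gives
\[
\big(f_{[0^{n(J)}1\,0^{m(J)}]}\big)'(x) = (f_0^{m(J)})'(z) \cdot f_1'(y) \cdot (f_0^{n(J)})'(x).
\]
As already noted in the excerpt, $n(J) \in \{N, N+1\}$ and $m(J) \ge 1$, and one checks that $y \in I_1$ while $z \in f_1(I_1) \subset (0, a_0)$.

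To bound the factor $(f_0^{n(J)})'(x)$, I would split into the two cases for $n(J)$. If $n(J) = N$, then $J \subset I_0$, and (F0.ii) gives $(f_0^N)'(x) > \alpha/\lambda > \alpha$. If $n(J) = N+1$, then either $f_0(x) \in I_0$ or $x \in I_0$; in the first sub-case $(f_0^{N+1})'(x) = (f_0^N)'(f_0(x)) \cdot f_0'(x) \ge (\alpha/\lambda) \cdot \lambda = \alpha$, and in the second sub-case $(f_0^{N+1})'(x) = f_0'(f_0^N(x)) \cdot (f_0^N)'(x) \ge \lambda \cdot (\alpha/\lambda) = \alpha$ (using $f_0^N(x) \in I_1 \subset [a_1, 1]$ and the trivial bound $f_0' \ge \lambda$). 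Hence in all cases $(f_0^{n(J)})'(x) \ge \alpha$ for every $x \in J$.

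For the factor $(f_0^{m(J)})'(z)$, the minimality of $m(J)$ forces $z, f_0(z), \ldots, f_0^{m(J)-1}(z)$ all to lie in $[0, a_0) \subset [0, b_0]$, where $f_0' > 1$ by (F0.ii). By compactness of $[0, a_0]$ and continuity of $f_0'$, there is a constant $\mu_0 > 1$ with $f_0' \ge \mu_0$ on $[0, a_0]$, giving $(f_0^{m(J)})'(z) \ge \mu_0^{m(J)} \ge \mu_0$. The middle factor is bounded below by $c_1 \eqdef \min_{y \in I_1} |f_1'(y)| > 0$, which exists by $C^1$ continuity of $f_1'$ on the compact $I_1$ and (F1.i). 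Combining,
\[
\big\lvert \big(f_{[0^{n(J)}1\,0^{m(J)}]}\big)'(x) \big\rvert \ge \alpha \cdot c_1 \cdot \mu_0^{m(J)} \ge \alpha c_1 \mu_0.
\]

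The main obstacle is to upgrade this to a bound strictly above $1$, uniformly in $J$. If $\alpha c_1 \mu_0 > 1$ already, one simply sets $\kappa \eqdef \alpha c_1 \mu_0$. Otherwise, the required refinement comes from the trade-off between $|f_1'(y)|$ being small and $m(J)$ being large: when $y$ approaches the portion of $I_1$ where $|f_1'|$ is smallest, $z = f_1(y)$ is driven close to the repelling fixed point $0$ of $f_0$, and by the defining property of $m(J)$ this forces $m(J)$ to grow large, so that the exponential factor $\mu_0^{m(J)}$ compensates. Quantifying this compensation using tempered distortion of $f_0'$ on $[0, b_0]$ and of $f_1'$ on $I_1$, together with the reinforcing inequality $\bar\alpha > 1/\alpha$ from (F1.ii) for $y$ near $a_1$, yields the uniform constant $\kappa > 1$ depending only on $f_0$ and $f_1$.
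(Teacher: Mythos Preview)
Your chain-rule decomposition and the case analysis giving $(f_0^{n(J)})'(x)\ge\alpha$ are correct and agree with the paper. The gap is in the treatment of the factor $|f_1'(y)|$. You bound it only by an unspecified constant $c_1>0$ and then, realizing that $\alpha\,c_1\,\mu_0$ need not exceed $1$, fall back in the final paragraph on a vague compensation argument --- asserting that small $|f_1'(y)|$ forces $z$ close to $0$ and hence $m(J)$ large, and that ``tempered distortion'' together with (F1.ii) then closes the gap. None of this is actually carried out, and the mechanism is not correct as stated: nothing in the hypotheses ties smallness of $|f_1'(y)|$ to smallness of $f_1(y)$, so the claimed trade-off between $c_1$ and $\mu_0^{m(J)}$ is not available in general.

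The paper's proof sidesteps all of this by invoking (F1.ii) directly for the middle factor: that hypothesis gives $|f_1'(y)|\ge\overline\alpha>1/\alpha$ at the point where $f_1$ is applied. Since the iterates $z,f_0(z),\dots,f_0^{m(J)-1}(z)$ lie in $[0,a_0]$ where $f_0'>1$, one has $(f_0^{m(J)})'(z)\ge 1$, and the product is bounded below by
\[
\overline\alpha\cdot\frac{\alpha}{\lambda}\cdot\lambda=\overline\alpha\,\alpha>1,
\]
so $\kappa\eqdef\overline\alpha\,\alpha$ works. The whole purpose of hypothesis (F1.ii) is to make this a one-line computation; you mention it only parenthetically at the end instead of using it where it is needed.
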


\begin{proof}
Recall that $n(J)= N$ or $=N+1$ and that $m(J)\ge 1$. Observe that the
hypotheses {(F0.i)}, {(F0.ii)}, and {(F1.ii)}  imply that for any
$x\in J$ we have
\[
\lvert\big(f_{[0^{n(J)}1\,0^{m(J)}]}\big)'(x) \big\rvert =\lvert
(f_0^{m(J)} \circ f_1 \circ f_0^{n(J)})'(x)\rvert \ge
\overline\alpha\, \frac{\alpha}{\lambda}\,\lambda >1,
\]
using the fact that $f_0^{m(J)}$ is applied to points in an
interval $[0,a_0]$, where $f_0'>1$. Taking $\kappa\eqdef
\overline\alpha \,\alpha$, this proves the lemma.
\end{proof}

\begin{defi}[Expanding successor]\label{d.expanding}
{\rm Given an interval $J\subset [f_0^{-2}(b_0),b_0]$,  we
associate to $J$ the finite sequence $\xi(J)$ given by
$$
\xi_0=\cdots =\xi_{n(J)-1}=0, \quad \xi_{n(J)}=1, \quad
\xi_{n(J)+1}=\cdots =\xi_{n(J)+m(J)}=0,
$$
where $n(J)$ and $m(J)$ are defined as above. In view of
Lemma~\ref{l.expanding}, we call $(\xi_0\ldots\xi_{n(J)+m(J)})$
the {\emph{expanding itinerary}} of $J$. We call the interval
\[
f_{[\xi_0\ldots\xi_{n(J)+m(J)}]}(J) \subset (0,1)
\]
the {\emph{expanded successor}} of $J$.
We say that an interval $J''$ is the {\emph{$i$-th expanded
successor of $J$}} if there is a sequence of intervals $J_0=J$,
$J_1$, $\ldots$, $J_{i-1}$, $J_i= J''$ such that for all $j=0$,
$\ldots$, $i-1$, we have
\[
    J_j\subset [f_0^{-2}(b_0),b_0] \quad\text{ and }\quad J_{j+1}
    \text{ is the expanded successor of }J_j.
\]
We denote the $i$-th expanded successor of $J$ by $J_{\langle
i\rangle}$.
Using this notation we have defined the expanded finite sequences $\xi_{\langle
i\rangle}=\xi(J_{\langle i\rangle})$ with
\begin{equation}\label{e.Ji}
    J_{\langle i+1\rangle}=f_{[\xi_{\langle i\rangle}]}(J_{\langle i\rangle}).
\end{equation}
We denote by $\lvert\xi_{\langle i\rangle}\rvert$ the length of
this sequence. }
\end{defi}

\begin{rema}\label{r.import}{\rm
    For future applications we remark that the numbers $n(J)$ and $m(J)$
    are both bounded from above by some number that is independent of the interval $J\subset [f_0^{-2}(b_0), b_0]$.
    Therefore, the definition of the expanded successor of an interval $J$ involves
    a concatenation of a number of maps $f_0$ and $f_1$ that is bounded by some constant that is independent on $J$. In particular, there are constants $\kappa_1$, $\kappa_2>0$ independent of $J$ such that for all $x\in J$ we have
    \[
	\kappa_1 \le |(f_{[0^{n(J)}1\,0^{m(J)}]})'(x)| \le \kappa_2.
    \]
}\end{rema}

In what follows we denote by $\lvert I\rvert$ the length of an interval $I$.

\begin{rema}\label{r.successor}
    {\rm
    Let $J$ be a closed subinterval in $[f_0^{-2}(b_0),b_0]$. By Lemma~\ref{l.expanding}, there is a constant $\kappa>1$ that is independent of the interval $J$ such that the expanded successor $J_{\langle 1\rangle}=f_{[\xi(J)]}(J)$ of $J$ satisfies
    $$
    |J_{\langle 1\rangle}|= |f_{[\xi(J)]}(J)|\ge \kappa\, |J|.
    $$
    Moreover, by definition of $m(J)$, the interval $J_{\langle 1\rangle}$ intersects $[f_0^{-1}(b_0),b_0]$.
}
\end{rema}

The following lemma is the main result of this subsection.

\begin{lemm}\label{l.successor}
	Given a closed interval $J\subset [f_0^{-2}(b_0),b_0]$, there is a number $i(J)\ge 1$ such that the $j$-th expanded successor $J_{\langle j\rangle}$ of $J$ is defined
    for all $j=1$, $\dots$, $i(J)-1$ and  that $J_{\langle i(J)\rangle}$ contains the
     fundamental domain $[f_0^{-2}(b_0),f_0^{-1}(b_0)]$.
\end{lemm}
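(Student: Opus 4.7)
The plan is to iterate the expanded-successor construction and to use the uniform expansion factor $\kappa>1$ of Lemma~\ref{l.expanding} to force the intervals to quickly enlarge until one of them covers the fundamental domain $[f_0^{-2}(b_0),f_0^{-1}(b_0)]$. The key geometric input is a single-step claim: for every closed interval $K\subset[f_0^{-2}(b_0),b_0]$ whose expanded successor is defined, the successor satisfies $K_{\langle 1\rangle}\subset[0,b_0)$ with right endpoint lying in $[a_0,b_0)=[f_0^{-1}(b_0),b_0)$. To verify this I would unpack Definition~\ref{d.expanding}: property (F01)(2) gives $K'=f_1(f_0^{n(K)}(K))\subset[0,a_0)$, and the minimality of $m(K)$ together with a short induction (using that $f_0$ is increasing with $f_0(a_0)=b_0$) shows $f_0^{m(K)-1}(K')\subset[0,a_0)$, so one further application of $f_0$ yields $K_{\langle 1\rangle}\subset[0,b_0)$, while the intersection condition $K_{\langle 1\rangle}\cap I_0\neq\emptyset$ places its right endpoint into $[a_0,b_0)$.

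The next step is to set up a dichotomy. At each $i\ge 1$ for which $J_{\langle i\rangle}$ has been produced, either $J_{\langle i\rangle}\subset[f_0^{-2}(b_0),b_0]$, so the construction continues to $J_{\langle i+1\rangle}$; or the left endpoint of $J_{\langle i\rangle}$ is strictly less than $f_0^{-2}(b_0)$. In the latter situation, the right-endpoint control from the single-step claim together with connectedness of $J_{\langle i\rangle}$ immediately give
\[
[f_0^{-2}(b_0),f_0^{-1}(b_0)]\subset J_{\langle i\rangle},
\]
and one sets $i(J)\eqdef i$ to finish.

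It remains to rule out that the first alternative can persist for all $i$. Setting $L\eqdef b_0-f_0^{-2}(b_0)>0$, any interval contained in $[f_0^{-2}(b_0),b_0]$ has length at most $L$, while iterating Lemma~\ref{l.expanding} gives $|J_{\langle j\rangle}|\ge\kappa^j\,|J|$. Hence the first branch can hold for at most $\lceil\log_\kappa(L/|J|)\rceil$ steps (recall that $|J|>0$ by the standing non-triviality assumption), after which the second branch of the dichotomy must apply, producing the desired $i(J)$. The main delicate point in the plan is the first-step claim: without the sharper bound $K_{\langle 1\rangle}\subset[0,b_0)$ (rather than merely $K_{\langle 1\rangle}\subset[0,1]$), one could not guarantee that the right endpoint reaches $f_0^{-1}(b_0)$, and it is precisely this containment of the right endpoint that makes the dichotomy close.
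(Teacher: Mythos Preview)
Your proposal is correct and follows essentially the same route as the paper's proof: iterate the expanded successor, use the uniform expansion $\kappa>1$ to force a first index at which the successor escapes $[f_0^{-2}(b_0),b_0]$, and then use control on the right endpoint to conclude that the fundamental domain is covered. The only difference is that you make explicit the containment $K_{\langle 1\rangle}\subset[0,b_0)$ (proving it from the minimality of $m(K)$ and (F01)(2)), whereas the paper invokes Remark~\ref{r.successor}, which literally asserts only $K_{\langle 1\rangle}\cap I_0\ne\emptyset$; your extra sentence is exactly what is needed to rule out escape on the right and to make the final inclusion airtight.
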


\begin{proof}
    Note that the expanded successor is defined for any interval $J\subset (f^{-2}(b_0),b_0]$. Assume, inductively, that for all $j=0$, $\dots$, $i$ the $j$-th expanded successor
    $J_{\langle j\rangle}$ of $J=J_{\langle 0\rangle}$ is defined and that
    $J_{\langle j\rangle}\subset [f_0^{-2}(b_0),b_0]$.
    Then the $(i+1)$-th expanded successor $J_{\langle i+1\rangle}$ of $J$ is also defined.

    Since for every $j=0$, $\ldots$, $i$ the interval $J_{\langle j+1\rangle}$ is the
    successor of $J_{\langle j\rangle}$,
    by Lemma~\ref{l.expanding} we have
    $$
        \lvert J_{\langle i+1\rangle}\rvert
            \ge \kappa^{i+1} \lvert J_{\langle 0\rangle}\rvert.
    $$
    Since the size of $[f_0^{-2}(b_0),b_0]$ is bounded there is a first $i(J)$ such that
    $J_{\langle 0\rangle},J_{\langle 1\rangle}$, $\dots$, $J_{\langle i(J)\rangle}$ are
    defined and $J_{\langle i(J)\rangle}$ is not contained in $[f_0^{-2}(b_0),b_0]$.
    Since by Remark~\ref{r.successor} the interval $J_{\langle i(J)\rangle}$
    intersects $[f_0^{-1}(b_0),b_0]$, this implies that
    \[
    	[f_0^{-2}(b_0),f_0^{-1}(b_0)]\subset J_{\langle i(J)\rangle},
    \]
    from which the claimed property follows.
\end{proof}

We yield the following result that can be of independent interest.

\begin{prop}[Sweeping property]\label{l.expiti}
  Given a closed interval $H\subset (0,1)$, there is a finite sequence $(\xi_0\ldots\xi_n)$ so that $f_{[\xi_0\ldots\xi_n]}(H)$ contains the fundamental domain $[f_0^{-2}(b_0),f_0^{-1}(b_0)]$.
\end{prop}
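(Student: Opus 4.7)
The plan is to reduce the claim to Lemma~\ref{l.successor}, which already grows any closed interval $J\subset [f_0^{-2}(b_0),b_0]$ into one containing the fundamental domain $[f_0^{-2}(b_0),f_0^{-1}(b_0)]$ via a finite concatenation of expanded successor blocks. It therefore suffices to exhibit a preliminary finite composition of $f_0$'s and $f_1$ that maps some non-trivial portion of $H$ into the strip $[f_0^{-2}(b_0),b_0]$; concatenating the two sequences then yields the required sequence $(\xi_0\ldots\xi_n)$.

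To construct the preliminary composition I would first exploit the fact that $1$ is an attracting fixed point of $f_0$ whose basin on $[0,1]$ contains $(0,1)$: choose $n_1\ge 0$ such that $f_0^{n_1}(H)\subset [a_1,1)$, then apply $f_1$, which by {(F01)(2)} sends this set inside $[0,a_0)$. Writing $K=f_1\circ f_0^{n_1}(H)=[c,d]$, the interval $K$ is non-trivial with $c>0$, since $1\notin H$ and $f_1$ is a decreasing diffeomorphism with $f_1(1)=0$. If $d>f_0^{-2}(b_0)$, then $K\cap[f_0^{-2}(b_0),a_0]$ is already a non-trivial closed subinterval of $[f_0^{-2}(b_0),b_0]$; otherwise I would take the smallest $k^*\ge 1$ with $f_0^{k^*}(d)>f_0^{-2}(b_0)$, which exists because $f_0^k(d)\to 1$. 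Minimality gives $f_0^{k^*-1}(d)\le f_0^{-2}(b_0)$, hence $f_0^{k^*}(d)\le f_0(f_0^{-2}(b_0))=a_0$, so that $f_0^{k^*}(K)\subset[0,a_0]$ with its right endpoint lying in $(f_0^{-2}(b_0),a_0]$. In either case
\[
J\eqdef f_0^{k^*}(K)\cap[f_0^{-2}(b_0),a_0]
\]
(with $k^*=0$ in the first case) is a non-trivial closed subinterval of $[f_0^{-2}(b_0),b_0]$.

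Lemma~\ref{l.successor} then supplies a finite composition $f_{[\xi'']}$ of expanded successor blocks with $f_{[\xi'']}(J)\supset[f_0^{-2}(b_0),f_0^{-1}(b_0)]$, and the concatenated sequence $(0^{n_1}\,1\,0^{k^*}\,\xi'')$ realizes the proposition, since $J\subset f_0^{k^*}\circ f_1\circ f_0^{n_1}(H)$. The only mildly delicate point is the calibration of $k^*$: one must not overshoot the strip $[f_0^{-2}(b_0),b_0]$ before extracting $J$. The minimality of $k^*$ together with the identity $f_0(f_0^{-2}(b_0))=a_0$ guarantees that $f_0^{k^*}(K)$ enters the strip from the left while remaining in $[0,a_0]$, so that $J$ is automatically a non-degenerate closed subinterval of $[f_0^{-2}(b_0),b_0]$ to which Lemma~\ref{l.successor} directly applies.
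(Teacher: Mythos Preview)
Your proof is correct and follows essentially the same approach as the paper: push $H$ toward $1$ with iterates of $f_0$, apply $f_1$ once to land near $0$, iterate $f_0$ again until a non-trivial piece sits inside $[f_0^{-2}(b_0),b_0]$, and then invoke Lemma~\ref{l.successor}. The paper's version is terser---it simply chooses $m$ large enough that $f_{[0^m1]}(H)\subset(0,f_0^{-2}(b_0))$, thereby avoiding your case split---but the argument is the same, and your extra care with the calibration of $k^*$ is a welcome addition rather than a deviation.
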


\begin{proof}
	Just note that there exists a number $m\ge1$ so that $f_{[0^m1]}(H)$ is contained in $(0,f_0^{-2}(b_0))$. Therefore, there is $k\ge1$ so that $f_{[0^m10^k]}(H)$ contains an interval $J\subset (f_0^{-2}(b_0),b_0)$ to which we can apply Lemma~\ref{l.successor}.
\end{proof}

\begin{defi}[Expanding sequence]{\rm
	In view of Lemma~\ref{l.successor}, given a closed interval $J\subset [f_0^{-2} (b_0),b_0]$ we consider its (finite) {\emph{expanding sequence}} $\xi(J)$ obtained by concatenating the finite sequences $\xi_{\langle i\rangle}=\xi(J_{\langle
i\rangle})$ corresponding to  the expanding successors $\xi_{\langle 1\rangle}$, $\ldots$, $\xi_{\langle i(J)\rangle}$ of $J$.

Note that, by definition of $i(J)$, we have $[f_0^{-2}(b_0),b_0]\subset f_{[\xi(J)]} (J)$ and $\lvert(f_{[\xi(J)]})'(x)\rvert>1$ for all $x\in J$.
}\end{defi}

An immediate consequence of Lemma~\ref{l.successor} and the
previous comments is the following lemma.

\begin{lemm}\label{l.newfixedexpandingpoint}
Given a closed interval $J\subset [f_0^{-2}
(b_0),b_0]$ and its expanding sequence $\xi(J)$,
there is a unique expanding fixed point $ q^\ast_J\in J$ of
$f_{[\xi(J)]}$. Moreover,  $W^u( q^\ast_J,f_{[\xi(J)]})$
contains $[f_0^{-2} (b_0),f_0^{-1}(b_0)]$.
\end{lemm}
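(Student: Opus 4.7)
The plan is to combine the uniform expansion of $f_{[\xi(J)]}$ on $J$ furnished by Lemma~\ref{l.expanding} with the covering property of Lemma~\ref{l.successor}, and then apply the Banach fixed point theorem to the inverse map $f_{[\xi(J)]}^{-1}$.

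First, I would verify that $f_{[\xi(J)]}\colon J\to J_{\langle i(J)\rangle}$ is a uniformly expanding $C^1$ diffeomorphism onto its image. By construction $\xi(J)$ is the concatenation of the expanding itineraries $\xi_{\langle 0\rangle},\ldots,\xi_{\langle i(J)-1\rangle}$ of the successive expanded successors $J=J_{\langle 0\rangle}, J_{\langle 1\rangle},\ldots,J_{\langle i(J)-1\rangle}$, each of which lies in $[f_0^{-2}(b_0),b_0]$. Applying Lemma~\ref{l.expanding} to each block and the chain rule to the composition gives
\[
	\bigl\lvert(f_{[\xi(J)]})'(x)\bigr\rvert\ge \kappa^{i(J)}>1
	\quad\text{for every } x\in J,
\]
which in particular forces $f_{[\xi(J)]}|_J$ to be strictly monotone.

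Second, I would argue that $J\subset J_{\langle i(J)\rangle}$. By Lemma~\ref{l.successor} the image contains the fundamental domain $[f_0^{-2}(b_0), f_0^{-1}(b_0)]$, so the left endpoint of $J_{\langle i(J)\rangle}$ is at most $f_0^{-2}(b_0)$, hence at most the left endpoint of $J$. For the right endpoint, the stopping rule defining $i(J)$ (namely, $J_{\langle i(J)\rangle}$ is not contained in $[f_0^{-2}(b_0),b_0]$) combined with the intersection property in Remark~\ref{r.successor} forces the right endpoint of $J_{\langle i(J)\rangle}$ to lie at least as far right as the right endpoint of $J$; this requires a careful bookkeeping of how the last application of the orientation-reversing $f_1$ together with the ensuing $f_0$-iterations drag the image across $[f_0^{-2}(b_0),b_0]$.

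Third, the inverse $g=f_{[\xi(J)]}^{-1}\colon J_{\langle i(J)\rangle}\to J$ is a contraction with rate at most $\kappa^{-i(J)}<1$ by the expansion estimate of step one. Since by step two $J\subset J_{\langle i(J)\rangle}$, the restriction $g|_J\colon J\to J$ is a contraction of the closed interval $J$ into itself, so the Banach fixed point theorem produces a unique fixed point $q^\ast_J\in J$ of $g$, which is the unique fixed point of $f_{[\xi(J)]}$ in $J$. The inequality $\lvert(f_{[\xi(J)]})'(q^\ast_J)\rvert\ge\kappa^{i(J)}>1$ shows that $q^\ast_J$ is expanding. Finally, the unstable manifold of $q^\ast_J$ under the map $f_{[\xi(J)]}$ contains the forward image $f_{[\xi(J)]}(J)=J_{\langle i(J)\rangle}$ of the local unstable set $J$ itself, which by Lemma~\ref{l.successor} contains $[f_0^{-2}(b_0), f_0^{-1}(b_0)]$, as claimed.

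The main obstacle lies in step two: because $f_1$ is orientation-reversing and the right endpoints of the expanded successors stay strictly below $b_0$ at every finite stage, the bracketing of $J$ by $J_{\langle i(J)\rangle}$ on the right must be extracted from an endpoint analysis that exploits how the minimal choice of the stopping index $i(J)$ forces the image to exit $[f_0^{-2}(b_0),b_0]$ only after having already covered the right part of $J$.
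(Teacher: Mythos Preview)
Your approach is essentially the paper's: the paper observes that $J\subset[f_0^{-2}(b_0),b_0]\subset f_{[\xi(J)]}(J)$ and that $f_{[\xi(J)]}$ is uniformly expanding on $J$, which immediately yields the unique expanding fixed point and the unstable manifold claim; your Banach fixed point argument for the inverse is just the standard way to formalize this. The ``main obstacle'' you flag in step two is not argued separately in the paper---the full covering $[f_0^{-2}(b_0),b_0]\subset f_{[\xi(J)]}(J)$ is recorded in the paragraph defining the expanding sequence (immediately before the lemma) as a direct consequence of the choice of $i(J)$, so once that is accepted your endpoint bookkeeping is unnecessary.
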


\begin{proof}
Observe that $J\subset [f_0^{-2} (b_0),b_0] \subset f_{[\xi(J)]}
(J)$ and that the map $f_{[\xi(J)]}$ is uniformly expanding in
$J$.
\end{proof}


\subsection{Lyapunov exponents close to $0$}\label{ss.lyap}

In this section we are going to construct fixed points (of
contracting and expanding type) with respect to certain maps
$f_{[\xi_0\ldots\xi_{m-1}]}$ whose Lyapunov
exponents are arbitrarily close to $0$. Here, given $p\in[0,1]$
and an admissible sequence
$\xi=(\ldots\xi_{-1}.\xi_0\xi_1\ldots)\in\Sigma_2$ of $p$, the
\emph{(forward) Lyapunov exponent} of $p$ with respect to the sequence $\xi$ is
defined by
\[
    \chi(p,\xi)\eqdef
    \lim_{n\to\infty}\frac{1}{n}\log \, \big\lvert (f_{[\xi_0\ldots\xi_{n-1}]})'(p)\big\rvert
\]
whenever this limit exists.
Otherwise we denote by $\underline\chi(p,\xi)$ and $\overline\chi(p,\xi)$ the \emph{lower} and the \emph{upper Lyapunov exponent} defined by taking the lower and the upper limit, respectively.

Given a periodic sequence $(\xi_0\ldots\xi_{m-1})^\ZZ\in\Sigma_2$ and a point $p_{(\xi_0\ldots\xi_{m-1})^\ZZ}=f_{[\xi_0\ldots\xi_{m-1}]}(p_{(\xi_0\ldots\xi_{m-1})^\ZZ})$,
we have
\begin{equation}\label{e.lyapexpo}
    \chi(p,(\xi_0\ldots\xi_{m-1})^\ZZ)=
    \frac 1 m \log\, \big\lvert (f_{[\xi_0\ldots\xi_{m-1}]})'(p_{(\xi_0\ldots\xi_{m-1})^\ZZ})\big\rvert.
\end{equation}

We are going to prove the existence of periodic points of contracting and expanding
type with Lyapunov exponents arbitrarily close to $0$.

\begin{prop}\label{p.lajk}
    For every $\varepsilon>0$ there exists a finite sequence $(1^\ell0^m10^j)$
    such that the  map $f_{[1^\ell0^m10^j]}$ is uniformly contracting in $[0,1]$
    and its fixed point $p$ is attracting, has a Lyapunov exponent in $(-\varepsilon,0)$,
    and has a stable manifold $W^s(p,f_{[1^\ell0^m10^j]})$ that
    contains the interval $[0,1]$.
\end{prop}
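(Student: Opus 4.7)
The plan is to build $g := f_{[1^\ell 0^m 1 0^j]} = f_0^j \circ f_1 \circ f_0^m \circ f_1^\ell$ by balancing three dynamical regimes from (F0)--(F01): the repelling fixed point $0$ of $f_0$ (multiplier $\beta>1$), the attracting fixed point $1$ of $f_0$ (multiplier $\lambda<1$), and the attracting fixed point $\widehat p\in(0,1)$ of $f_1$ (multiplier $\mu=|f_1'(\widehat p)|<1$). The prefix $f_1^\ell$ compresses $[0,1]$ into a tiny neighbourhood of $\widehat p$; the block $f_0^m$ pulls that image to within $O(\lambda^m)$ of the attractor $1$; the cycle condition $f_1(1)=0$ then swings it into an $O(\lambda^m)$-neighbourhood of the repeller $0$; and the final block $f_0^j$ expands it back into a prescribed compact subinterval of $(0,1)$.

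Given $\varepsilon>0$, I would choose $m$ large, $j:=\lfloor m\log(1/\lambda)/\log\beta\rfloor$, and then $\ell$ large (but sub-linear in $L:=\ell+m+1+j$). Writing $y_3=f_1^\ell(x),\ y_2=f_0^m(y_3),\ y_1=f_1(y_2)$, tempered $C^1$-distortion near $\widehat p$, near $1$, and near $0$ respectively yields $|(f_1^\ell)'(x)|\asymp\mu^\ell$, $y_2=1-O(\lambda^m)$ with $|(f_0^m)'(y_3)|\asymp\lambda^m$, then $y_1\asymp\lambda^m$ and $|f_1'(y_2)|\asymp|f_1'(1)|$, and finally (with $j$ tuned as above) the fixed point $p:=f_0^j(y_1)$ lies in a fixed compact subinterval of $(0,1)$ together with $|(f_0^j)'(y_1)|\asymp p/y_1\asymp\lambda^{-m}$. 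Multiplying the four factors, the $\lambda^{\pm m}$'s cancel and I obtain
\[
|g'(x)|\asymp\mu^\ell\qquad\text{uniformly in }x\in[0,1],
\]
with implicit constants depending only on the IFS data. Thus for $\ell$ large enough, $g$ is a uniform contraction of $[0,1]$, and the Banach fixed point theorem gives a unique attractor $p$ with $W^s(p,g)=[0,1]$.

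The Lyapunov exponent of the periodic orbit is then
\[
\chi=\frac{\log|g'(p)|}{L}\approx\frac{\ell\log\mu+O(1)}{L}.
\]
Since tempered distortion allows one to take $\ell$ with $\ell/L\to 0$ as $L\to\infty$ while still ensuring uniform contraction, the numerator is $o(L)$, so $\chi\to 0^-$, and any prescribed $\varepsilon>0$ is achievable by choosing $m$ sufficiently large.

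The main technical obstacle is the $C^1$ (rather than $C^{1+\alpha}$) setting: bounded distortion is replaced by the tempered variant, and one must verify that the derivative ratio $|g'(x)|/|g'(p)|$ stays uniformly bounded in $x\in[0,1]$ as the word length grows -- otherwise the ``fixed-point'' estimate above would not yield uniform contraction on all of $[0,1]$. This is precisely the type of distortion control the authors emphasise in the Introduction and develop in Section~\ref{s.onedim}; with it in hand the construction above produces the required sequence $(1^\ell 0^m 1 0^j)$ and the negative-but-near-zero Lyapunov exponent.
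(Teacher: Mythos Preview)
Your approach is essentially the paper's: the paper packages your ``$\lambda^{\pm m}$ cancel'' step as a separate lemma (Lemma~\ref{l.looping}, looping orbits), showing that for a fundamental domain $J$ of $f_0$ containing $\widehat p$ and for $j=j(m)$ the first return time of $f_{[0^m10^j]}(J)$ to $J$, one has $e^{-j\rho_j-m\rho_m}\le\lvert(f_{[0^m10^j]})'(x)\rvert\le e^{j\rho_j+m\rho_m}$ on $J$ with $\rho_n\to 0$; it then prefixes by $f_1^\ell$ with $\ell$ minimal so that $\gamma^\ell e^{j\rho_j+m\rho_m}<1$, which forces $\ell=o(m+j)$ and gives the exponent bound exactly as you outline.

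One point to tighten: your display $\lvert g'(x)\rvert\asymp\mu^\ell$ ``with implicit constants depending only on the IFS data'' and the subsequent ``$O(1)$'' in the Lyapunov estimate are precisely the $C^{1+\alpha}$ statements that are \emph{not} available here --- the correct error is $e^{\pm(j\rho_j+m\rho_m)}=e^{o(m+j)}$, not $O(1)$. You flag this yourself in the final paragraph, but the earlier displays should be written with the tempered error from the start; otherwise the claim that a \emph{fixed} $\ell$ suffices for uniform contraction would be false. With that correction (which you clearly anticipate) the argument is the paper's.
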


\begin{prop}\label{p.posspec}
    For every $\varepsilon>0$ there exists a finite sequence $(\xi_0\ldots\xi_{n-1})$
    such that the  map $f_{[\xi_0\ldots\xi_{n-1}]}$ has an expanding fixed point whose
    Lyapunov exponent is in $(0,\varepsilon)$.
\end{prop}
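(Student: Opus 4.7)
The plan is to construct very long periodic words whose fixed points are expanding with derivative only slightly greater than $1$. The template is to concatenate many copies of a fixed expanding word with a carefully chosen number of trailing $f_0$-iterations, so that the contraction accumulated near the attracting fixed point $1$ of $f_0$ almost cancels the expansion.

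Concretely, let $\omega_E \eqdef \xi(J_0)$ be the expanding sequence from Lemma~\ref{l.newfixedexpandingpoint} associated with the fundamental domain $J_0 \eqdef [f_0^{-2}(b_0), f_0^{-1}(b_0)]$. Denote its length by $L_E$ and write $\Lambda_E \eqdef |(f_{[\omega_E]})'(q^*_{J_0})|>1$ for the derivative at its expanding fixed point $q^*_{J_0}$. For integers $n\ge 1$, $t\ge 0$ set $\omega(n,t)\eqdef \omega_E^n\cdot 0^t$, a word of length $nL_E+t$, and let
\[
	\phi_{n,t} \eqdef f_{[\omega(n,t)]} \;=\; f_0^t \circ (f_{[\omega_E]})^n.
\]
Since $q^*_{J_0}$ is an expanding fixed point of $\phi_{n,0}$ with derivative $\Lambda_E^n\ne 1$, the implicit function theorem yields a continuous branch of fixed points $t\mapsto p_{n,t}$ with $p_{n,0}=q^*_{J_0}$, well-defined so long as $D(n,t)\eqdef |\phi_{n,t}'(p_{n,t})|$ stays away from $1$.

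I then analyze the sequence $D(n,\cdot)$. We have $D(n,0)=\Lambda_E^n>1$, while as $t\to\infty$ the iterate $f_0^t$ drives every point in $(0,1)$ to $1$, forcing $p_{n,t}\to 1$ and, by a direct chain-rule estimate, $D(n,t)\sim C_n\lambda^t\to 0$. Let $t^*(n)$ be the first integer with $D(n,t^*(n))>1$ and $D(n,t^*(n)+1)\le 1$; by construction the associated point $p_{n,t^*(n)}$ is an expanding fixed point of $\phi_{n,t^*(n)}$. Two quantitative inputs now complete the argument. \emph{(a) Uniform upper bound $D(n,t^*(n))\le\lambda^{-1}+o(1)$.} For large $t$ both $p_{n,t}$ and $y_{n,t}\eqdef (f_{[\omega_E]})^n(p_{n,t})$ lie in small neighborhoods of $1$ and of $(f_{[\omega_E]})^n(1)$ respectively, so by the chain rule and continuity of $f_0'$ at $1$ the multiplicative step $D(n,t+1)/D(n,t)$ tends to $f_0'(1)=\lambda<1$; consequently $D(n,t^*(n))\le\lambda^{-1}D(n,t^*(n)+1)+o(1)\le\lambda^{-1}+o(1)$. \emph{(b) Linear lower bound $t^*(n)\ge cn$.} The same chain rule gives $D(n,t+1)\ge\lambda\,D(n,t)(1-o(1))$ for all $t$, so starting from $D(n,0)=\Lambda_E^n$ one finds $t^*(n)\ge n\log\Lambda_E/|\log\lambda|-O(1)$.

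Combining (a) and (b), the Lyapunov exponent of the resulting periodic orbit is
\[
	\chi\big(p_{n,t^*(n)},\,\omega(n,t^*(n))^{\ZZ}\big) \;=\; \frac{\log D(n,t^*(n))}{nL_E+t^*(n)} \;\in\; \Big(0,\,\frac{|\log\lambda|+o(1)}{nL_E+t^*(n)}\Big),
\]
which is positive and of order $O(1/n)$. Given $\varepsilon>0$, choosing $n$ large enough that the right-hand side falls below $\varepsilon$ produces a word $\omega(n,t^*(n))$ whose expanding fixed point has Lyapunov exponent in $(0,\varepsilon)$.

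The main technical obstacle is the uniform version of (a): one needs quantitative control on the rate at which $p_{n,t}\to 1$ and $y_{n,t}\to (f_{[\omega_E]})^n(1)$, uniform in $n$, in order to conclude that $D(n,t+1)/D(n,t)\to \lambda$ uniformly. This is amenable to the tempered-distortion techniques mentioned in Section~\ref{s.example}, exploiting that the last several forward iterates of $y_{n,t}$ under $f_0$ approach $1$ geometrically at a rate depending only on $\lambda$ and on the location of $(f_{[\omega_E]})^n(1)$, not on the length of the $\omega_E^n$-prefix.
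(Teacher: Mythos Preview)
Your strategy is intuitively appealing and genuinely different from the paper's, but the argument as written has a structural gap that is not merely cosmetic. The heart of your proof is the comparison $D(n,t+1)/D(n,t)\approx\lambda$, which you invoke both for the upper bound (a) and the lower bound (b). However, $D(n,t)=\lvert\phi_{n,t}'(p_{n,t})\rvert$ and $D(n,t+1)=\lvert\phi_{n,t+1}'(p_{n,t+1})\rvert$ are derivatives evaluated at \emph{different} fixed points. The clean identity $\phi_{n,t+1}'(x)=f_0'(\phi_{n,t}(x))\cdot\phi_{n,t}'(x)$ holds only at a common point $x$; to transfer it to the fixed points you need $p_{n,t}\approx p_{n,t+1}$, which you have not established. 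Your appeal to the implicit function theorem does not help here: the parameter $t$ is an integer, so there is no continuous branch to speak of, and the existence, uniqueness, and proximity of the $p_{n,t}$ as $t$ varies all require separate arguments. In particular, for $t\ge 1$ the interval $J_0$ is no longer mapped over itself by $\phi_{n,t}$, so the fixed point immediately leaves the region where you have derivative control. You explicitly flag the uniformity in (a) as the ``main technical obstacle'' and leave it unresolved; the same issue undermines (b).

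For comparison, the paper's proof avoids fixed-point tracking altogether. It starts from a single fundamental domain $I$, uses the looping-orbit Lemma~\ref{l.looping} to produce a long word $0^m10^j$ for which $f_{[0^m10^j]}(I)$ returns to $I$ with derivative in $[e^{-(j\rho_j+m\rho_m)},e^{j\rho_j+m\rho_m}]$ (a direct tempered-distortion estimate on a fixed interval, not a moving fixed point), and then appends a short expanding itinerary (bounded number $\overline N\lesssim (j\rho_j+m\rho_m)/\log\kappa_2$ of expanded successors) to force a covering of $I$. The expanding fixed point then comes from a single covering inclusion, and its exponent is bounded by $(\overline N\log\kappa_1+j\rho_j+m\rho_m)/(j+m)$, which tends to $0$ because $\rho_k\to 0$. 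The point is that all derivative estimates are made on the fixed interval $I$ before any fixed point is produced, so no branch-tracking or uniformity-in-$n$ argument is needed.
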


Before proving the above two propositions, we formulate some
preliminary results.

\subsubsection{Tempered distortion}

First, we verify a distortion property. Note
that we establish the tempered distortion property that holds true
if $f_0$ is only a $C^1$ map, instead of focusing on a bounded
distortion property that would require the standard
$C^{1+\varepsilon}$ assumption to be satisfied.

We will say that an interval $J\subset (0,1)$ \emph{contains at
most $K$ consecutive fundamental domains} of $f_0$ if any orbit of
$f_0$ hits at most $K+1$ times this interval.

\begin{lemm}[Tempered distortion]\label{l.distoe}
    Given a point $\,\widehat p\in(0,1)$ and a number $K\ge 1$, there exists a positive
    sequence $(\rho_k)_{k\ge0}$ decreasing to $0$ such that for every interval $J$ containing
    $\widehat p$ and containing at most $K$ consecutive fundamental domains of $f_0$  we have
    \[
        e^{-k\rho_{\lvert k\rvert}} \frac{\lvert f_0^{\pm k}(J)\rvert}{\lvert J\rvert}
        \le (f_0^{\pm k})'(x)
        \le e^{k\rho_{\lvert k\rvert}} \frac{\lvert f_0^{\pm k}(J)\rvert}{\lvert J\rvert}
    \]
    for all $k\in\ZZ$ and for every $x\in J$.
\end{lemm}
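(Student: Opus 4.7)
The plan is to reduce the inequality to a telescoping sum of modulus-of-continuity increments and then exploit a summability bound on the iterates of $J$ coming from the fundamental-domain structure of $f_0$. Since $f_0\in C^1([0,1])$ with $\lambda\le f_0'\le\beta$ by (F0.i), the function $\log f_0'$ is uniformly continuous on $[0,1]$, so it admits a modulus of continuity $\omega\colon[0,\infty)\to[0,\infty)$ with $\omega(\delta)\to 0$ as $\delta\to 0^+$ and $\omega\le\log(\beta/\lambda)$. For $k\ge 0$ the mean value theorem supplies $y\in J$ with $\lvert f_0^k(J)\rvert/\lvert J\rvert=(f_0^k)'(y)$, so the claim reduces to bounding $\lvert\log((f_0^k)'(x)/(f_0^k)'(y))\rvert$. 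The chain rule together with the definition of $\omega$ yields the telescoping estimate
\[
    \Bigl\lvert\log\tfrac{(f_0^k)'(x)}{(f_0^k)'(y)}\Bigr\rvert
    \le\sum_{i=0}^{k-1}\omega\bigl(\lvert f_0^i(J)\rvert\bigr),
\]
and the analogous estimate with $f_0^{-1}$ replacing $f_0$ handles negative $k$ (noting that $f_0^{-1}\in C^1$ with derivative in $[1/\beta,1/\lambda]$).

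The key input is the summability estimate $\sum_{i\ge 0}\lvert f_0^i(J)\rvert\le K+1$, together with the corresponding bound for backward iterates. Indeed, for any single fundamental domain $D$ of $f_0$ the iterates $\{f_0^i(D)\}_{i\in\ZZ}$ have pairwise disjoint interiors and tile $(0,1)$, so $\sum_{i\in\ZZ}\lvert f_0^i(D)\rvert\le 1$; since $J$ meets at most $K+1$ fundamental domains by hypothesis, additivity over these pieces yields the stated bound. Combined with the trivial estimate $\omega\le\log(\beta/\lambda)$, a level-set split --- at most $(K+1)/\varepsilon$ indices $i$ can satisfy $\lvert f_0^i(J)\rvert\ge\varepsilon$, since otherwise the total sum would exceed $K+1$ --- gives
\[
    \frac1k\sum_{i=0}^{k-1}\omega\bigl(\lvert f_0^i(J)\rvert\bigr)
    \le\omega(\varepsilon)+\frac{\log(\beta/\lambda)\,(K+1)}{\varepsilon\,k}.
\]
Sending $k\to\infty$ and then $\varepsilon\to 0$ forces the left-hand side to $0$ uniformly in $J$ across the admissible family. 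Taking the supremum over admissible $J$ of both the forward and the backward averages, and then passing to the decreasing envelope, produces the desired positive decreasing sequence $\rho_k\to 0$; exponentiating the telescoping estimate delivers the two-sided bound in the statement.

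The only substantive obstacle is uniformity of $\rho_k$ across the family of admissible intervals $J$; this is precisely what the fundamental-domain summability $K+1$ provides, since that bound depends on $K$ alone. Note that the merely $C^1$ (rather than $C^{1+\varepsilon}$) assumption on $f_0$ is exactly why the factor $e^{k\rho_k}$ appears in the conclusion rather than a uniform constant: absent H\"older control of $\log f_0'$, one cannot sum a geometric series, and the tempered rate $\rho_k\to 0$ is the optimal output of the argument.
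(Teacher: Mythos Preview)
Your proof is correct and follows the same telescoping/modulus-of-continuity architecture as the paper's argument: both bound $\bigl\lvert\log\bigl((f_0^k)'(x)/(f_0^k)'(y)\bigr)\bigr\rvert$ by a sum of increments $\omega(\lvert f_0^i(J)\rvert)$ and then show the Ces\`aro average tends to zero uniformly over the admissible family.

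The difference lies in how uniformity in $J$ is obtained. The paper argues that $\lvert f_0^n(J)\rvert\to 0$ as $n\to\infty$ (implicitly uniformly, since all admissible $J$ contain the fixed anchor point $\widehat p$ and hence sit inside a fixed finite union of fundamental domains), then sets $\widetilde\rho_n$ to dominate the $n$th increment and averages. You instead exploit the summability bound $\sum_{i\ge 0}\lvert f_0^i(J)\rvert\le K+1$, which depends only on $K$, and extract uniformity via a level-set split. Your route is a bit more explicit about where the uniformity comes from, and in fact shows that the hypothesis ``$J$ contains $\widehat p$'' is not needed for the conclusion---the fundamental-domain count alone suffices. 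The paper's version, by contrast, uses the anchor $\widehat p$ to pin down which fundamental domains are involved so that the pointwise convergence $\lvert f_0^n(J)\rvert\to 0$ becomes uniform.
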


\begin{proof}
    Let $x$, $y\in J$.
    As $f_0'$ is bounded away from $0$ and the map $y\mapsto\log y$ is Lipschitz if $y$ is bounded away from $0$, there exists some positive constant $c$ and a positive sequence $(\widetilde\rho_k)_k$
    decreasing to $0$ so that for every $k\ge 1$
    \[
        \left\lvert\log\frac{ (f_0^k)'(x)}{ (f_0^k)'(y)}\right\rvert
        \le c\sum_{n=0}^{k-1}\lvert f_0'(f_0^n(x)) - f_0'(f_0^n(y))\rvert
        \le c\sum_{n=0}^{k-1}\widetilde\rho_n.
    \]
    Here the latter estimate follows from continuity of $f_0'$ and the fact that
    $\lvert f_0'(x)-f_0'(y)\rvert\to0$ as $\lvert x-y\rvert\to0$ and the observation
    that $\lvert f^n_0(J)\rvert\to0$ as $n\to\infty$. Now take
    \[
        \rho_{k}\eqdef
        \frac{c\,(\widetilde\rho_0+\cdots+\widetilde\rho_{k-1})}{k}
    \]
    and note that $\rho_k\to0$ as $k\to\infty$. Taking $y$ such that
    $\lvert (f_0^k)'(y)\rvert = \lvert f^k(J)\rvert / \lvert J \rvert$
    we get the claimed property.

Analogously, we have $\lvert f^{-n}_0(J)\rvert\to 0$ as
$n\to\infty$, from which we can conclude the case $k\le0$.
\end{proof}

\subsubsection{Looping orbits}

We now show that the derivative along a looping orbit starting and
returning to a fixed fundamental domain growths only
sub-exponentially with respect to its length.

\begin{lemm}\label{l.boufun}
    Given a fundamental domain $J\subset (0,1)$, there exists a number $K\ge 1$
    such that for all $m\ge1$ sufficiently large the interval $f_{[0^m1]}(J)$
    contains at most $K$ consecutive fundamental domains of $f_0$.
\end{lemm}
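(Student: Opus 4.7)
The plan is to analyze $f_{[0^m1]}(J)=f_1(f_0^m(J))$ in two stages: first control the geometry of $f_0^m(J)$ as it is pushed toward the attracting fixed point $1$ of $f_0$, then apply $f_1$ and count fundamental domains of $f_0$ near the repelling fixed point $0$. The key observation will be that the ratio of the distances of the two endpoints of $f_0^m(J)$ to $1$ stays uniformly bounded in $m$, so that after applying $f_1$ the image interval sits near $0$ with uniformly bounded length-to-distance ratio, forcing a uniform bound on the number of fundamental domains of $f_0$ that can fit inside.

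First I analyze $f_0^m(J)$. Write $J=[a,f_0(a)]$, so that $f_0^m(J)=[f_0^m(a),f_0^{m+1}(a)]$, and set $u_m\eqdef 1-f_0^m(a)$ and $v_m\eqdef 1-f_0^{m+1}(a)$. The mean value theorem applied to $1-f_0(x)=f_0'(\xi)(1-x)$ gives $v_m=f_0'(\xi_m)\,u_m$ for some $\xi_m\in(f_0^m(a),1)$. Since $f_0^m(a)\to 1$ (as $1$ is attracting) and $f_0'(1)=\lambda$, continuity of $f_0'$ at $1$ yields $v_m/u_m\to\lambda$ as $m\to\infty$, so in particular $u_m/v_m$ is bounded uniformly in $m$ large. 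Iterating this identity (using $u_{m+1}=v_m$) also gives $u_m,v_m\asymp\lambda^m$.

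Next I apply $f_1$. Since $f_1(1)=0$, $f_1$ is decreasing, and $\gamma'\le|f_1'|\le\gamma<1$, the mean value theorem implies $f_1(1-t)\in[\gamma' t,\gamma t]$ for every $t\in[0,1]$. Writing $f_1(f_0^m(J))=[y_1^{(m)},y_2^{(m)}]$ with $y_1^{(m)}=f_1(1-v_m)$ and $y_2^{(m)}=f_1(1-u_m)$, one obtains $y_2^{(m)}/y_1^{(m)}\le(\gamma/\gamma')(u_m/v_m)\le C$ for some constant $C$ independent of $m$, and both endpoints are of order $\lambda^m$. To bound the number of consecutive fundamental domains of $f_0$ inside $[y_1^{(m)},y_2^{(m)}]$, I use that $f_0'(0)=\beta>1$ and the continuity of $f_0'$ at $0$ to pick $\beta'\in(1,\beta]$ and a neighborhood of $0$ on which $f_0(x)\ge\beta' x$. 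For $m$ large, $[y_1^{(m)},y_2^{(m)}]$ lies in this neighborhood; any $f_0$-orbit that enters it at a point $x_0\ge y_1^{(m)}$ satisfies $f_0^k(x_0)\ge(\beta')^k y_1^{(m)}$ until it exits, which occurs as soon as $(\beta')^k y_1^{(m)}>y_2^{(m)}$. Hence each orbit visits the interval for at most $K+1$ consecutive iterations, where $K\eqdef\lceil\log C/\log\beta'\rceil$, which is the desired uniform bound.

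The main subtlety lies in the first step: one must exploit the fact that $J$ is a \emph{fundamental domain}, which links its two endpoints by a single $f_0$-iterate, so that the ratio of their distances to $1$ is controlled by the single factor $f_0'(\xi_m)\to\lambda$ -- a bounded quantity. Without this structural feature the tempered distortion (Lemma~\ref{l.distoe}) would yield only sub-exponential control, which would not suffice to produce a uniform $K$.
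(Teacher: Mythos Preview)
Your proof is correct and follows essentially the same approach as the paper's: both linearize $f_0$ near the attracting fixed point $1$ to control $f_0^m(J)$, then use $f_1(1)=0$ together with the bounds $\gamma'\le|f_1'|\le\gamma$ to locate $f_{[0^m1]}(J)$ near $0$, and finally exploit the uniform expansion $f_0(x)\ge\beta' x$ near $0$ to bound the number of fundamental domains. The paper estimates the length $|f_0^m(J)|\asymp a_m(1-\lambda)$ and the position of the left endpoint $\asymp a_m$ separately (with $a_m=1-f_0^m(a)$), whereas you track directly the ratio $y_2^{(m)}/y_1^{(m)}$ of the two endpoints; these are equivalent, and your ratio formulation is somewhat cleaner since that ratio is precisely what enters the final count.
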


\begin{proof}
    For each  $m\ge1$ let us define numbers $a_m\in(0,1)$ by
    \[
        f_0^m(J)=[1-a_m,f_0(1-a_m)].
    \]
    Note that, because $f_0(1)=1$, $f_0'(1)=\lambda$, the derivative $f_0'$ is
continuous, and $f_0^m(J)$ converges to $1$ (and thus $a_m\to 1$)
as $m\to\infty$, if $m$ is large enough, we have
\[
    \lvert f_0^m(J)\rvert
    = f_0(1-a_m)-(1-a_m) \approx f_0(1)-a_m f_0'(1)-(1-a_m)
    = a_m(1- \lambda)
\]
and hence
\begin{equation}\label{e.eqnumber}
    \frac 1 2  \, a_m \, (1-\lambda)
    \le
    \lvert f_0^m(J)\rvert
    \le 2 \, a_m \, (1-\lambda).
\end{equation}
Recalling the definitions of $\ga$ and $\ga'$ in (F1.i), one has that
\begin{equation}\label{e.etapa1}
\ga'\, \frac 1 2  \, a_m \, (1-\lambda)
    \le
    \lvert f_1( f_0^m(J))\rvert
  \le   \ga\,    2 \, a_m \, (1-\lambda).
\end{equation}
Similarly one obtains that there is a constant $C>1$ independent
of large $m$ such that $C^{-1}\,a_m \le f_1 (f_0(1-a_m))\le C\,
a_m$. Hence
\begin{equation}\label{e.etapa2}
	f_{[0^m1]}(J)
	\subset \big[C^{-1}\, a_m, C\,a_m(1+2\ga\,(1-\lambda))\big].
\end{equation}
Noting that the derivative of $f_0$ in $f_{[0^m1]}(J)$ is close to
$\be$ and bigger than some $\be'$ close to $\be$, we get that for
large  $m$ the interval $f_{[0^m1]}(J)$ contains at most $\ell+2$
fundamental domains where $\ell$ is the largest natural number with
$$
	(\be')^\ell \, C^{-1}\,a_m
	\le C\, a_m(1+2\ga\,(1-\lambda)).
$$
Notice that $a_m$ cancels and hence the number $\ell$ does not depend on $m$ if $m$ is large enough. This finishes the proof of the lemma.
\end{proof}

We will now use the above lemma to prove the following.

\begin{lemm}\label{l.looping}
Given a fundamental domain $J\subset (0,1)$ of $f_0$, there exists
$m_0\ge1$ and a positive sequence $(\rho_n)_n$ decreasing to $0$ so that for every $m\ge m_0$ there exists a number $j>0$
      such that the interval $f_{[0^m10^j]}(J)$
    intersects $J$. If $j=j(m)$ is the smallest positive number with this property then
    \[
        e^{-j\rho_j-m\rho_m}\le \lvert (f_{[0^m10^j]})'(x)\rvert \le e^{j\rho_j+m\rho_m}
    \]
     for every $x\in J$ and every $m\ge m_0$ and $j=j(m)$.
\end{lemm}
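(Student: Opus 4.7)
The plan is to analyze the interval $H_m \eqdef f_{[0^m 1]}(J)$ and its first return time to $J$ under iterates of $f_0$. Since $f_0^m(J)$ converges to $\{1\}$ as $m \to \infty$ and $f_1(1) = 0$, the interval $H_m$ converges to $\{0\}$, so for $m_0$ large enough we have $H_m \subset (0, a)$ for every $m \geq m_0$, where $a$ denotes the left endpoint of $J$. Because $0$ is a repelling fixed point of $f_0$ with no other fixed point in $(0, a)$, every point of $H_m$ eventually leaves $(0, a)$ under forward iteration of $f_0$; define $j(m)$ to be the least $j \geq 1$ with $f_0^j(H_m) \cap J \neq \emptyset$.

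By the chain rule,
\[
|(f_{[0^m 1 0^j]})'(x)| = |(f_0^j)'(y)| \cdot |f_1'(z)| \cdot |(f_0^m)'(x)|,
\]
where $z = f_0^m(x)$ and $y = f_1(z) \in H_m$. The middle factor lies in $[\gamma', \gamma]$ by (F1.i). For the two $f_0$-factors I apply Lemma~\ref{l.distoe} twice: on the fundamental domain $J$ this gives $|(f_0^m)'(x)| = e^{\pm m\rho_m} |f_0^m(J)|/|J|$, and on $H_m$ this gives $|(f_0^j)'(y)| = e^{\pm j\rho_j} |f_0^j(H_m)|/|H_m|$, where Lemma~\ref{l.boufun} ensures $H_m$ spans at most $K$ consecutive fundamental domains so that Lemma~\ref{l.distoe} is applicable. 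Using the mean value equality $|H_m| = |f_1'(z')| \cdot |f_0^m(J)|$ to cancel the factor $|f_0^m(J)|$, the expression collapses to
\[
|(f_{[0^m 1 0^j]})'(x)| = e^{\pm (m\rho_m + j\rho_j)} \cdot \frac{|f_1'(z)|}{|f_1'(z')|} \cdot \frac{|f_0^j(H_m)|}{|J|},
\]
and the ratio of $f_1'$-factors lies in the bounded interval $[\gamma'/\gamma, \gamma/\gamma']$.

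The remaining task, and the main obstacle, is to show that $|f_0^j(H_m)|/|J|$ is bounded above and below by positive constants independent of $m$. The upper bound is immediate: minimality of $j$ forces $f_0^{j-1}(H_m) \subset (0, a)$, hence $f_0^j(H_m) \subset [0, b]$. For the lower bound, partition $(0, a]$ into the fundamental domains $J_k \eqdef f_0^{-k}(J)$, $k \geq 1$, and set $k_m = j(m)$, noting that by minimality of $j(m)$ the right endpoint of $H_m$ lies in $J_{k_m}$. Applying the tempered distortion estimate to $f_0^{k_m}\colon J_{k_m} \to J$ gives $(f_0^{k_m})'(y)$ comparable to $|J|/|J_{k_m}|$ up to a factor $e^{\pm k_m \rho_{k_m}}$ for $y \in J_{k_m}$, and combining with the distortion estimate on $H_m$ yields $|f_0^{k_m}(H_m)|$ comparable to $|H_m| \cdot |J|/|J_{k_m}|$. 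It therefore suffices to bound $|H_m|/|J_{k_m}|$ below by a positive constant. This follows from a size comparison near $0$: $|H_m|$ is of order $\lambda^m |J|$ and $|J_{k_m}|$ is of order $\beta^{-k_m}|J|$, both up to tempered factors, while the location of the right endpoint of $H_m$ inside $J_{k_m}$ forces the product $\lambda^m \beta^{k_m}$ to lie in a bounded interval independent of $m$. Consequently $|H_m|/|J_{k_m}|$ is trapped between two positive constants, and the remaining multiplicative constants are absorbed into the tempered rate by replacing $\rho_n$ with $\rho_n + C/n$, which still decreases to $0$, giving the stated double inequality for every $m \geq m_0$.
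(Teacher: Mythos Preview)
Your proof follows essentially the same route as the paper's: apply tempered distortion (Lemma~\ref{l.distoe}) to $J$ for $f_0^m$ and to $H_m=f_{[0^m1]}(J)$ for $f_0^j$ (the $K$-fundamental-domain bound from Lemma~\ref{l.boufun} is exactly what justifies the latter), bound the $f_1'$ factor by $[\gamma',\gamma]$, and then arrange a cancellation so that only constants and tempered factors remain.

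There is one genuine slip in your justification of the lower bound. The claim that ``the product $\lambda^m\beta^{k_m}$ lies in a bounded interval independent of $m$'' is false for merely $C^1$ maps: one only has $a_m=\lambda^m e^{o(m)}$ and $f_0^{-k_m}(a)=\beta^{-k_m}e^{o(k_m)}$, so $\lambda^m\beta^{k_m}$ is controlled only up to tempered factors, not by fixed constants. Your conclusion that $|H_m|/|J_{k_m}|$ is trapped between positive constants is nevertheless correct, but for a different reason, and this is exactly where the paper's argument is cleaner. Writing $f_0^m(J)=[1-a_m,f_0(1-a_m)]$, the paper observes that near $1$ one has $|f_0^m(J)|\asymp a_m(1-\lambda)$ and the right endpoint of $H_m$ is $f_1(1-a_m)\asymp a_m$ (constant comparability, since $f_1(1)=0$ and $|f_1'|\in[\gamma',\gamma]$); near $0$ the fundamental domain with that right endpoint has length $\asymp a_m(1-\beta^{-1})$. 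Thus both $|H_m|$ and $|J_{k_m}|$ scale like $a_m$, with constants independent of $m$, and the ratio is genuinely bounded. The $a_m$ then cancels against the factor $|f_0^m(J)|/|J|\asymp a_m/|J|$ coming from the first distortion estimate. Replacing your $\lambda^m,\beta^{-k_m}$ detour with this direct $a_m$-parametrization fixes the argument.
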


\begin{proof}
    Let $J=[a,f_0(a)]\subset (0,1)$ be a fundamental domain with respect to $f_0$.
    There exists a number $m_0\ge1$ so that for every $m\ge m_0$ and every $x\in J$ we have $f^{m}_0(x)>f_1^{-1}(a)$ and that $f_1 (f^{m}_0(x))$ is in the expanding region of $f_0$. Moreover, the interval $f_{[0^m10^j]}(J)$ intersects $J$ for some
$j>0$.

    As in the above proof, for each  $m\ge m_0$ let us denote \[f_0^m(J)=[1-a_m,f_0(1-a_m)].\]
    As for~\eqref{e.eqnumber} we obtain
\[
    \frac 1 2 \, a_m \, (1-\lambda)
    \le \lvert f_0^m(J)\rvert
    \le 2\, a_m \, (1-\lambda).
\]
The tempered distortion result in Lemma~\ref{l.distoe} now implies
that there is a positive sequence $(\widehat\rho_k)_k$ decreasing
to $0$ such that for all $x\in J$
\begin{equation}\label{e.star}
    \frac{1}{2} \, \frac{a_m \, (1-\lambda)}{\lvert J\rvert} \, e^{-m\widehat\rho_m}
    \le (f_0^m)'(x)
    \le 2\, \frac{a_m \, (1-\lambda)}{\lvert J\rvert}\, e^{m\widehat\rho_m}.
\end{equation}
We consider now the fundamental domain of $f_0$
\[
    L\eqdef
    \big[f_0^{-1}\big( f_1(1-a_m)\big),f_1(1-a_m)\big].
\]
Note that by Lemma~\ref{l.boufun} the interval $f_{[0^m1]}(J)$
contains at most $K$ fundamental domains. By our choice the right extreme of $L$ is the right extreme of $f_{[0^m1]}(J)$ and therefore
\[
f_{[0^m1]}(J)\subset \widetilde L \eqdef L \cup f_0^{-1}(L) \cup
\ldots \cup f_0^{-K}(L).
\]

In the next step we compare the lengths of $\widetilde L$ and $f_0^m(J)$.
Arguing exactly as above, using the fact that $f_0'(0)=\beta$,
$f_0'(x)\le \beta$, $\ga'\le \lvert f_1'\rvert \le \gamma$, and that
$f_1(1-a_m)$ is close to $0=f_1(1)$ for $m$ large enough, we
obtain
\begin{equation}\label{e.am}
    \frac 1 2 \,\ga \,a_m \, (1-\beta^{-1})
    \le \lvert L\rvert \le
    \ga'\, \, a_m \,  (1-\beta^{-1}).
\end{equation}
Since for $i\ge 0$ we have that $\be^{-i}\, \lvert L\rvert \le \lvert f_0^{-i}(L)\rvert
\le (\beta')^{-i} \,\lvert L\rvert$ for some number $1<\be'<\be$, from \eqref{e.am}
we immediately obtain constants $k_1,k_2>0$ such that
\begin{equation}\label{e.Ltilde}
k_1 \,a_m \le \lvert\widetilde L\rvert\le k_2 \, a_m.
\end{equation}
Let us fix constants $M^-$ and $M^+$ such that if $J'$, $J''$ are any two
non-disjoint fundamental domains of $f_0$ then
\begin{equation}\label{e.choice}
    M^-\lvert J'\rvert \le \lvert J''\rvert \le M^+\lvert J'\rvert.
\end{equation}
For large $m$ the  sets $f_{[0^m1]}(J)$ and $\widetilde L$ both are
to the left of $J$. Thus there exists a smallest positive integer
$j=j(m)$ such that $f_0^j(f_{[0^m1]}(J))$ (and thus $f_0^j(\widetilde L\,)$)
intersects $J$ for the first time (the same number for both
intervals). We now apply the tempered distortion property in
Lemma~\ref{l.distoe} to the interval $\widetilde L$. Hence, there
exists a sequence $(\widetilde \rho_k)_k$ decreasing to $0$ so that for
all $x\in \widetilde L$ we have
\[
    e^{-j\widetilde\rho_j}\, \frac{\lvert f_0^j( \widetilde L)\rvert}{\lvert \widetilde L\rvert}
    \le \lvert (f_0^j)'(x)\rvert
    \le e^{j\widetilde\rho_j} \, \frac{\lvert f_0^j(\widetilde L)\rvert}{\lvert
    \widetilde L\rvert}.
\]
The definition of $\widetilde L$ and \eqref{e.choice} imply that
$$
    M^- \, \lvert J\rvert
    \le \lvert f_0^j(\widetilde L)\rvert
    \le \widetilde M^+  \, \lvert J\rvert,
    \quad \mbox{where} \quad \widetilde M^+= \sum_{j=0}^K (M^+)^j.
$$
Thus, by the two previous equations, for $x\in \widetilde L$ we yield
\[
    e^{-j\widetilde\rho_j}\, \frac{ M^-\, \lvert J\rvert}{\lvert \widetilde L\rvert}
    \le \lvert (f_0^j)'(x)\rvert
    \le e^{j\widetilde\rho_j}\,\frac{\widetilde M^+ \, \lvert J\rvert}{\lvert \widetilde L\rvert}.
\]
This inequality together with~\eqref{e.Ltilde} imply that  for all $x\in
\widetilde L$ (and thus for all $x\in f_{[0^m1]}(J)$) we have
\begin{equation}\label{e.lahgs}
    \frac{e^{-j\widetilde\rho_j}\, M^- \, \lvert J\rvert}{k_2\,a_m}
    \le \lvert (f_0^j)'(x)\rvert
    \le \frac{e^{j\widetilde\rho_j}\, \widetilde M^+ \, \lvert J\rvert}{k_1\,a_m} .
\end{equation}
Now putting together~\eqref{e.star} and~\eqref{e.lahgs} and
recalling that $\ga'\le| f_1'|\le \gamma$, we see that the factors
$\lvert J\rvert$ and $a_m$ cancel. Hence we obtain for every
$x\in J$
\[
e^{-j\widehat \rho_j-m\widetilde\rho_m}\,
    \frac{(1-\lambda)\, \ga' \, M^-}{2\,k_2}
    \le \lvert (f_0^j\circ f_1\circ f_0^m)'(x)\rvert
     \le e^{j\widehat \rho_j+m\widetilde\rho_m}\,
        \frac{2\, (1-\lambda)\, \gamma \, \widetilde M^+}{k_1}.
\]
Thus there is some $\widetilde C>1$ independent of $m$ and
$j=j(m)$ such that
\[
    \widetilde C^{-1} \, \big( {e^{-j\widehat
    \rho_j-m\widetilde\rho_m}} \big)
    \le  \lvert (f_0^j\circ f_1\circ f_0^m)'(x)\rvert
    \le \widetilde C \,  \big( e^{j\widehat \rho_j+m\widetilde\rho_m}\big).
\]
The claimed property hence follows with  $\rho_n\eqdef
\max\{\widehat \rho_n,\widetilde\rho_n+\frac{1}{2\, n} \log
\widetilde C\}$.
\end{proof}

\subsubsection{Weak contracting and expanding looping orbits}

We are now ready to prove the above propositions.

\begin{proof}[Proof of Proposition~\ref{p.lajk}]
Recall that we denoted by $\widehat p\in(0,1)$ the attracting
fixed point of $f_1$. Consider a fundamental domain $J$ of $f_0$
containing $\widehat p$ in its interior and some $\ell_0\ge 1$
such that for all $\ell\ge\ell_0$ the interval $f_1^\ell([0,1])$
is contained in $J$. This is possible because $f_1^\ell([0,1])$
converges to $\widehat p$.

By Lemma~\ref{l.looping}, there exist a number $m_0\ge 1$ and a
positive sequence $(\rho_n)_n$ decreasing to zero so that for
every $m\ge m_0$ and $j=j(m)$ the intersection of the intervals
$f_{[0^m10^j]}(J)$ and $J$ is nonempty and that for every $x\in J$
we have
\[
    e^{-j\rho_j-m\rho_m} \le
    \lvert (f_{[0^m10^j]})'(x)\rvert
    \le e^{j\rho_j+m\rho_m}
   .
\]
Recall now the choice of the constants $\gamma$ and $\ga'$ in (F1.i).
Therefore, for every $m\ge m_0$ and $j=j(m)$, every $\ell\ge
\ell_0$, and every $x\in[0,1]$ we obtain
\begin{equation}\label{e.chat}
    {(\ga')}^\ell \,e^{-j\rho_j-m\rho_m}
    \le
    \lvert (f_{[1^\ell 0^m10^j]})'(x)\rvert
    \le e^{j\rho_j+m\rho_m}\gamma^\ell.
\end{equation}

Let us now choose $\ell=\ell(m,j)\ge \ell_0$ that is the smallest
number such that the right-hand side in~\eqref{e.chat} is $<1$,
this means that we have
\begin{equation}\label{e.musik}
    \frac{j\rho_j+m\rho_m}{-\log\gamma}
    < \ell
    \le \frac{j\rho_j+m\rho_m}{-\log\gamma}+1.
\end{equation}
Since $f^\ell_1([0,1])\subset J$  we can apply the above estimates
to any point $x\in f^\ell_1([0,1])$. Thus, the map
$f_{[1^\ell0^m10^j]}$ is a contraction in $[0,1]$ and hence has a
unique fixed point $p_{[1^\ell 0^m10^j]}$ whose basin of
contraction contains $[0,1]$. Moreover, its Lyapunov exponent
$\chi(p_{[1^\ell0^m10^j]},\xi)$ with $\xi=(1^\ell0^m10^j)^\ZZ$ satisfies
\[
 M(\ell,m,j)\eqdef    \frac{\ell\log \ga' -j\rho_j-m\rho_m}{j+m+\ell+1}
     \le \chi(p_{[1^\ell0^m10^j]},\xi)
     <0.
\]
Using~\eqref{e.musik}, we obtain that  $M(\ell,m,j)$ is bounded
from below by
\[
    \frac{1}{j+m+1}\log \ga'
        \left(\frac{j\rho_j+m\rho_m}{-\log\gamma}+1\right)
    -\frac{1}{j+m+1}\left(j\rho_j+m\rho_m\right)\le M(\ell,m,j).
\]
When we now take $m$ arbitrarily large the index $j=j(m)$ is also
large. Hence the  lower bound $M(\ell,m,j)$ is arbitrarily close
to $0$. As the exponent is negative, this finishes the proof of
the proposition.
\end{proof}

\begin{proof}[Proof of Proposition~\ref{p.posspec}]
    Take the fundamental domain $I=[f_0^{-1}(b_0),b_0]$.
    By Lemma~\ref{l.looping}, there exists a number $m_0\ge 1$ and a positive sequence $(\rho_k)_k$ decreasing to zero so that for every $m\ge m_0$ and $j=j(m)$ the intersection of the intervals $f_{[0^m10^j]}(I)$ and $I$ is nonempty and that for every $x\in I$ we have
\begin{equation}\label{e.almocofab}
    e^{-j\rho_j-m\rho_m}
   \le  \lvert (f_{[0^m10^j]})'(x)\rvert
   \le e^{j\rho_j+m\rho_m}.
\end{equation}
This implies, possibly after slightly decreasing $(\rho_k)_k$, that
\begin{equation}\label{e.feira}
    \big\lvert f_{[0^m10^j]}(I) \cap \big( f_0^{-1}(I)\cup I\big) \big\rvert
    \ge e^{-j\rho_j-m\rho_m}.
\end{equation}
We now consider the sub-interval
\[
    J_{m,j}\eqdef f_{[0^m10^j]}(I) \cap \big( f_0^{-1}(I)\cup I\big).
\]
and consider its expanded successors as defined in Definition~\ref{d.expanding}.

First recall that by Remark~\ref{r.import} the number of
applications of the maps $f_0$ and $f_1$ involved in the
definition of the expanded successor of an interval is uniformly
bounded from above and below by numbers $N_1>N_2\ge 1$ that do not
depend on the interval. Moreover, recall that by
Remark~\ref{r.import} each expanded itinerary has a uniform
expansion  bounded from below and above by numbers
$\kappa_1>\kappa_2>1$ that are independent of the itinerary.

Therefore, as the length of the interval in~\eqref{e.feira} is
bounded from below and each expanded successor involves a uniform
expansion bounded from below by $\kappa_2$, we need to repeat a
finite number $\overline N=\overline N(m,j)$  of times the
expanded successors to obtain the covering of the fundamental
domain $[f_0^{-2}(b_0),f_0^{-1}(b_0)]$ as stated in
Lemma~\ref{l.successor}. Now we denote by $F_{m,j}$ the resulting
concatenated map. Moreover, by construction the interval
\[
    f_{[\xi_0\ldots\xi_n]}(J_{m,j})\eqdef (f_0\circ F_{m,j})(J_{m,j}) ,
\]
covers the original interval $I$ and hence there exists an
expanding fixed point $p_{m,j}\in I$ with respect to the map
$f_{[\eta]}$ with $\eta\eqdef (0^m10^j\xi_0\ldots\xi_n)$. Moreover, by the comments above, $n$ is some number satisfying $\overline N N_2+1\le n<\overline
NN_1+1.$

We finally estimate the Lyapunov exponent of $p_{m,j}$. Using the
length estimate of $J_{m,j}$ in \eqref{e.feira} and the uniform
expansion of each expanded successor by a factor of at least
$\kappa_2$, we can estimate $\overline N$ from above by
\begin{equation}\label{e.N}
    \overline N\le  \frac{C+j\rho_j+m\rho_m}{\log\,\kappa_2},
\end{equation}
where $C>0$ only depends on the length $[f_0^{-2}(b_0),
f_0^{-1}(b_0)]$. Hence, by~\eqref{e.almocofab} and since each
expanding iterate expands at most by $\kappa_1$, the Lyapunov exponent
at $p_{m,j}$ satisfies
\[
    0<\chi(p_{m,j},\eta)
    \le \frac{\overline N \log\,\kappa_1+j\rho_j+m\rho_m}{\overline N \, N_2+j+m}\le  \frac{\overline
N \log\,\kappa_1+j\rho_j+m\rho_m}{j+m},
\]
Now~\eqref{e.N} implies that the upper bound can be estimated from
above by
\[
    \chi(p_{m,j},\eta)\le
    \frac{C+j\rho_j+m\rho_m}{\log\,\kappa_2}\frac{\log\,\kappa_1}{j+m}
    +\frac{j\rho_j+m\rho_m}{j+m}.
\]
Recall that the index $j=j(m)$ is large when $m$ is large.
Thus, $\rho_j$, $\rho_m\to 0$ and hence  this exponent is arbitrarily
close to $0$.
\end{proof}

\subsection{Admissible domains}\label{ss.onedim.fix}

In this subsection we explore the rich structure of admissible domains.

\begin{nota}{\rm
	Given $\xi=(\ldots\xi_{-1}.\xi_0\xi_1\ldots)\in\Sigma_2$ and $m\ge 1$, let us denote
\begin{equation}\label{def:I}
	I_{[\xi_{-m}\ldots\xi_{-1}.]}
    	\eqdef f_{\xi_{-1}}\circ\ldots\circ f_{\xi_{-m}}([0,1]).
\end{equation}
This set is always a non-trivial sub-interval of $[0,1]$. Note that $f_{\xi_i}([0,1])\subset[0,1]$ for every $\xi_i\in\{0,1\}$. Therefore, for each one-sided infinite sequence $\xi=(\ldots \xi_{-2}\xi_{-1}.)$ the sets $I_{[\xi_{-m}\ldots\xi_{-1}.]}$ form a nested sequence of non-empty compact intervals. Thus, the set $I_{[\xi]}$ defined by
\[
    	I_{[\xi]}\eqdef
    	\bigcap_{m\ge1} I_{[\xi_{-m}\ldots\xi_{-1}.]}
\]
is either a singleton or a non-trivial interval. For completeness, for each $n\ge0$
we write
\[
    	I_{[\xi_{-m}\ldots\xi_{-1}.\xi_0\ldots\xi_n]}
    	\eqdef I_{[\xi_{-m}\ldots\xi_{-1}.]}.
\]
}\end{nota}

\begin{rema}\label{r.Lor}{\rm
    Note that given a sequence $\xi\in\Sigma_2$, any point $x\in I_{[\xi]}$ is admissible for $\xi$. Observe that for all $m\ge 1$ the interval $I_{[\xi_{-m}\ldots\xi_{-1}.]}$ is the maximal domain of the map $f_{[\xi_{-m}\ldots \xi_{-1}.]}$ which justifies our notation.
}\end{rema}

Note that any sequence $\xi\in\Sigma_2$ is given by $\xi=\xi^-.\xi^+$, where $\xi^+\in\Sigma_2^+\eqdef\{0,1\}^\NN$ and $\xi^-\in\Sigma_2^-\eqdef\{0,1\}^{-\NN}$.

\begin{prop}\label{p.residualtrivfib}
We have the following properties:
\begin{enumerate}
\item [1)] The set $\{\xi\in\Sigma_2\colon I_{[\xi]} \text{ is a single point}\,\}$ is
residual in $\Sigma_2$.
\item [2)] Given $\xi^+\in\Sigma_2^+$, the set $\{\xi^-\in \Si_2^-\colon I_{[\xi^-.\xi^+]} \text{ is non-trivial}\,\}$ is uncountable and dense in $\Sigma_2^-$.
\end{enumerate}
Moreover, for every closed interval $J\subset[0,1)$ and every $\xi^+\in\Sigma_2^+$ the set of sequences $\xi^-\in\Sigma_2^-$ with $I_{[\xi^-.\xi^+]}\supset J$ is uncountable.
\end{prop}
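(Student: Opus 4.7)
For the residuality in (1) I plan a Baire category argument. For each $n\ge 1$ define
\[
U_n\eqdef\{\xi\in\Sigma_2 : \lvert I_{[\xi_{-m}\ldots\xi_{-1}.]}\rvert<1/n \text{ for some } m\ge 1\},
\]
which is open since the condition depends on only finitely many backward coordinates. To show $U_n$ is dense, I take any cylinder in $\Sigma_2$ and extend its backward part by repeatedly concatenating the finite word $w=(1^\ell 0^m 1 0^j)$ from Proposition~\ref{p.lajk}. Since $f_{[w]}$ is a uniform contraction of $[0,1]$ with ratio $\kappa<1$, after $k$ copies the diameter of $I_{[\xi_{-M}\ldots\xi_{-1}.]}$ is bounded by a constant coming from the fixed prefix times $\kappa^k$, eventually less than $1/n$. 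The Baire category theorem yields that $\bigcap_n U_n$ is residual; by nestedness of the intervals $I_{[\xi_{-m}\ldots\xi_{-1}.]}$, every $\xi$ in this intersection has $I_{[\xi]}$ equal to a singleton.

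For the density in (2), given a cylinder in $\Sigma_2^-$ prescribing $\xi_{-1},\ldots,\xi_{-M}$, I would complete it by setting $\xi_{-j}=0$ for all $j>M$. Because $f_0$ is a bijection of $[0,1]$, for every $m\ge M$ we get $I_{[\xi_{-m}\ldots\xi_{-1}.]}=(f_{\xi_{-1}}\circ\cdots\circ f_{\xi_{-M}})([0,1])$, a non-trivial closed interval because the composition is injective. The uncountability part of (2) then follows from the moreover applied to any non-degenerate $J\subset[0,1)$.

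For the moreover, fix a closed interval $J\subset[0,1)$ and set $K=[f_0^{-2}(b_0),f_0^{-1}(b_0)]$. My plan is first to reduce to showing the set $\{\xi^- : I_{[\xi^-]}\supset K\}$ is uncountable: using the sweeping property (Proposition~\ref{l.expiti}) and Lemma~\ref{l.successor}, I select a fixed finite prefix word $\tau$ of length $L$ near coordinate $-1$ so that $(f_{\tau_0}\circ\cdots\circ f_{\tau_{L-1}})(K)\supset J$, and then any tail $\zeta^-$ with $I_{[\zeta^-]}\supset K$, once prepended with $\tau$, yields $I\supset J$. To build uncountably many such tails, I appeal to Lemma~\ref{l.newfixedexpandingpoint}: for each sub-interval $J'\subset K$ the expanding word $\xi(J')$ has an expanding fixed point with $W^u(q^*_{J'},f_{[\xi(J')]})\supset K$. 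For each binary sequence $a\in\{0,1\}^\NN$ I encode the symbols by choosing between two disjoint candidate sub-intervals $J^{(0)},J^{(1)}\subset K$ and form $\zeta^-(a)$ by concatenating the blocks $\xi(J^{(a_k)})$ from the past to the present. Distinct $a$'s yield distinct $\zeta^-(a)$, so $\{\xi^-(a)\}_{a\in\{0,1\}^\NN}$ is a continuum of pairwise distinct sequences.

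The main obstacle is verifying that this concatenation preserves the containment $I\supset K$ at every finite stage. For a single block, the unstable-manifold property of Lemma~\ref{l.newfixedexpandingpoint} immediately gives that the image of $[0,1]$ contains $K$. For the inductive step, if $k$ blocks have been composed, chaining with the next block requires not merely that the new block's image contains $K$, but that the composition of the previous blocks maps $K$ back onto a set containing $K$, which is not automatic. To handle this I plan to arrange the candidate sub-intervals $J^{(0)},J^{(1)}$ via Lemma~\ref{l.successor} so that the expanded successor of each lies inside the previous $K$-superset, and to invoke the uniform per-block expansion from Remark~\ref{r.import} together with the tempered distortion of Lemma~\ref{l.distoe} to propagate the inclusion $I_{[\zeta^-(a)_{-m}\ldots\zeta^-(a)_{-1}.]}\supset K$ for every $m$, uniformly in $a$.
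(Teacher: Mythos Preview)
Your arguments for (1) and for the density in (2) are correct and essentially the paper's: for (1) the paper likewise intersects open dense sets, obtaining density by appending a tail of $1$'s (an asymptotically contracting sequence) rather than your repeated block from Proposition~\ref{p.lajk}, and openness via upper semi-continuity of $\xi\mapsto\lvert I_{[\xi]}\rvert$; for (2) the density argument is identical (Lemma~\ref{l.leftint}).

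The gap is in the ``moreover''. Your reduction from an arbitrary $J$ to $K=[f_0^{-2}(b_0),f_0^{-1}(b_0)]$ requires a forward composition $g=f_{\tau_0}\circ\cdots\circ f_{\tau_{L-1}}$ with $g(K)\supset J$, but the tool you invoke points the wrong way: Proposition~\ref{l.expiti} and Lemma~\ref{l.successor} produce words sending a given interval \emph{onto} something containing $K$, not words sending $K$ onto something containing a prescribed $J$. For $J$ close to $[0,1)$ this is a genuine obstruction, since the expanding itineraries only operate while the running interval stays inside $[f_0^{-2}(b_0),b_0]$ and hence yield bounded total expansion of $K$. Ironically, the step you flag as the ``main obstacle'' is harmless: since $J^{(i)}\subset K$ and, by the definition of the expanding sequence, $f_{[\xi(J^{(i)})]}(J^{(i)})\supset[f_0^{-2}(b_0),b_0]\supset K$, monotonicity gives $f_{[\xi(J^{(i)})]}(K)\supset K$; the inclusion $I_{[\zeta_{-m}\ldots\zeta_{-1}.]}\supset K$ then propagates through all block boundaries, and nestedness passes it to the intersection, with no distortion argument needed. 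So your scheme does manufacture uncountably many $\zeta^-$ with $I_{[\zeta^-]}\supset K$; it just does not transfer to an arbitrary $J$.

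The paper's route (Lemma~\ref{l.uncountb}) sidesteps any reduction by working backward: given $J\subset(0,1)$, pick $K_1$ with $f_0^{-K_1}(J)\subset(0,f_1(0))$, so $J\subset I_{[10^{K_1}.]}$; set $J_1=(f_1^{-1}\circ f_0^{-K_1})(J)$ and iterate. At every stage any $\overline K_\ell> K_\ell$ works just as well, and a Cantor diagonal argument on these free choices yields uncountably many $\xi^-$ with $I_{[\xi^-]}\supset J$ directly.
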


We postpone the proof of this proposition to the end of this section. Let us first collect some basic properties of the admissible domains $I_{[\xi]}$.

\begin{lemm}\label{l.in01}
    Given a finite sequence $(\xi_{-m}\ldots\xi_{-1})$, if $\xi_{-k}=1$ for at least two indices $k\in\{1,\ldots,m\}$ then $I_{[\xi_{-m}\ldots\xi_{-1}.]}\subset(0,1)$.
\end{lemm}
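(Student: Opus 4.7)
The plan is to expand the composition step by step, tracking the image interval $J_k$ after applying the first $k$ maps in the composition $f_{\xi_{-1}}\circ\cdots\circ f_{\xi_{-m}}$, and to show that after the second $f_1$ has been applied the image has been pushed strictly inside $(0,1)$ and stays there. The only ingredients needed are the following elementary facts extracted from (F0.i), (F1.i), and (F01): the map $f_0$ is an increasing homeomorphism of $[0,1]$ with fixed points $0$ and $1$; the map $f_1$ is a strictly decreasing contraction with $f_1(1)=0$; and since $f_1$ is a contraction, $\lvert f_1(0)-f_1(1)\rvert<1$, so $f_1(0)<1$.

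First I would let $g_i\eqdef f_{\xi_{-(m-i+1)}}$ and $J_0\eqdef[0,1]$, $J_i\eqdef g_i(J_{i-1})$, and denote by $i_1<i_2$ the two smallest indices with $g_{i_1}=g_{i_2}=f_1$ (these exist by hypothesis). For $i<i_1$ all the maps are $f_0$, and since $f_0([0,1])=[0,1]$, we get $J_{i_1-1}=[0,1]$. Applying the first $f_1$ yields $J_{i_1}=f_1([0,1])=[0,f_1(0)]\subset[0,1)$. Then I would observe that $f_0$ preserves intervals of the form $[0,y]$ with $y<1$ (because $f_0(0)=0$ and $f_0(y)<f_0(1)=1$ by strict monotonicity), so by induction all $J_i$ for $i_1\le i\le i_2-1$ have the form $[0,z_i]$ with $z_i<1$.

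Next, applying the second $f_1$ to $J_{i_2-1}=[0,z_{i_2-1}]$ gives $J_{i_2}=f_1([0,z_{i_2-1}])=[f_1(z_{i_2-1}),f_1(0)]$. Since $z_{i_2-1}<1=f_1^{-1}(0)$ and $f_1$ is strictly decreasing, $f_1(z_{i_2-1})>0$; and $f_1(0)<1$ as noted above. Hence $J_{i_2}\subset(0,1)$. Finally I would close the argument by induction: if $J_i=[\alpha,\beta]\subset(0,1)$, then $f_0(J_i)=[f_0(\alpha),f_0(\beta)]$ with $f_0(\alpha)>0$ and $f_0(\beta)<1$, while $f_1(J_i)=[f_1(\beta),f_1(\alpha)]$ with $f_1(\beta)>f_1(1)=0$ and $f_1(\alpha)\le f_1(0)<1$. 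Either way $J_{i+1}\subset(0,1)$, so $I_{[\xi_{-m}\ldots\xi_{-1}.]}=J_m\subset(0,1)$.

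There is no real obstacle here; the only thing that requires a bit of attention is keeping the composition order straight (the innermost map $f_{\xi_{-m}}$ is applied first), so that "the first $f_1$" corresponds to the largest index $k$ with $\xi_{-k}=1$ rather than the smallest. Once this bookkeeping is set up, the proof reduces to the three monotonicity/fixed-point observations above.
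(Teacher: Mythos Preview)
Your proof is correct and follows essentially the same strategy as the paper's: locate the first two occurrences of $f_1$ in the composition, show that after the second one the image lies in $(0,1)$, and then argue that both $f_0$ and $f_1$ preserve the property of lying strictly inside $(0,1)$. The only cosmetic difference is the direction in which the composition is unrolled: you track the partial images $J_i$ from the innermost map outward and finish with a direct induction, whereas the paper works with the two \emph{smallest} indices $k$ with $\xi_{-k}=1$ (the outermost $f_1$'s) and then invokes the nesting $I_{[\xi_{-\ell}\ldots\xi_{-1}.]}\subset I_{[\xi_{-k}\ldots\xi_{-1}.]}$ for $\ell\ge k$ to conclude. Both routes rest on exactly the three elementary facts you isolate.
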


\begin{proof}
    Let $m\ge1$ be the smallest index so that $\xi_{-m}=1$. By property {(F01)} we have $I_{[\xi_{-m}\ldots\xi_{-1}.]}([0,1])=[0,c_1]\subset[0,1)$.
    Recall that $f_0([0,1])=[0,1]$ and $f_0(0)=0$. If $k\ge m$ is the smallest index such that $\xi_{-k}=1$ then we have
    \[
        I_{[\xi_{-k}\ldots\xi_{-m}\ldots\xi_{-1}.]}
        =\big[f_1\circ f_0^{k-m-1}(c_1),f_1(0)\big],
    \]
    where $f_1\circ f_0^{k-m-1}(c_1)>0$ and $f_1(0)<1$.
    This implies that for all $\ell\ge k$ we have $I_{[\xi_{-\ell}\ldots\xi_{-1}.]}\subset(0,1)$. This proves the lemma.
\end{proof}

Note that $f_0^k([0,1])=[0,1]$ implies that
\[
	I_{[0^k\xi_{-m}\ldots\xi_{-1}.]}
	= ( f_{\xi_{-1}}\circ \ldots \circ f_{\xi_{-m}})
		\big(f_0^k([0,1])\big)
	= I_{[\xi_{-m}\ldots\xi_{-1}.]}.
\]
This implies the following result.

\begin{lemm}\label{l.leftint}
    For all $k\ge 1$ we have $I_{[0^k\xi_{-m}\ldots\xi_{-1}.]} = I_{[\xi_{-m}\ldots\xi_{-1}.]}$.
\end{lemm}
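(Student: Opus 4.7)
The plan is to observe that this lemma is essentially a bookkeeping statement that falls out immediately from the definition \eqref{def:I} of $I_{[\xi_{-m}\ldots\xi_{-1}.]}$ together with the invariance property $f_0([0,1])=[0,1]$. Indeed, by (F0.i) the map $f_0$ is continuous and increasing on $[0,1]$ with $f_0(0)=0$ and $f_0(1)=1$, so it maps $[0,1]$ onto itself, and consequently $f_0^k([0,1])=[0,1]$ for every $k\ge1$.

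With this observation in hand, I would simply apply the definition: writing the concatenated sequence $(0^k\xi_{-m}\ldots\xi_{-1})$ out, one has
\[
	I_{[0^k\xi_{-m}\ldots\xi_{-1}.]}
	= f_{\xi_{-1}}\circ\cdots\circ f_{\xi_{-m}}\circ f_0^k\big([0,1]\big)
	= f_{\xi_{-1}}\circ\cdots\circ f_{\xi_{-m}}\big(f_0^k([0,1])\big),
\]
and then replace $f_0^k([0,1])$ by $[0,1]$ to recover $I_{[\xi_{-m}\ldots\xi_{-1}.]}$. This is exactly the computation already performed in the paragraph immediately preceding the statement of the lemma, so the proof is really just a one-line citation of that display.

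There is no genuine obstacle here; the whole point of stating this as a separate lemma is presumably that it will be invoked repeatedly in the subsequent proof of Proposition~\ref{p.residualtrivfib}, where one needs to freely prepend arbitrarily long blocks of $0$'s to a negative itinerary without changing the resulting admissible domain. The only thing to be careful about is the indexing convention — the block $0^k$ occupies the positions $-(m+k),\ldots,-(m+1)$, so when the composition is written in the order dictated by \eqref{def:I} (innermost map being $f_{\xi_{-(m+k)}}=f_0$, outermost being $f_{\xi_{-1}}$), the $k$ applications of $f_0$ are applied first to $[0,1]$, which is precisely why they can be absorbed.
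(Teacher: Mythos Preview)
Your proposal is correct and is exactly the paper's approach: the displayed computation immediately preceding the lemma statement already carries out the one-line argument you describe, using $f_0^k([0,1])=[0,1]$ to absorb the prepended block of $0$'s. There is nothing to add.
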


\begin{lemm}\label{l.nontri}
    Given $\xi=(\ldots\xi_{-1}.\xi_0\xi_1\ldots)\in\Sigma_2$ satisfying
    $\xi_{-m}=0$ for all $m\ge m_0$ for some index $m_0\ge 1$, the domain
    $I_{[\xi]}=I_{[\xi_{-m_0}\ldots\xi_{-1}.]}$ is a non-trivial interval.
\end{lemm}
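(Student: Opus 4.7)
The plan is to reduce the problem to a finite intersection via Lemma~\ref{l.leftint}. Given the hypothesis $\xi_{-m}=0$ for all $m\ge m_0$, for every $k\ge m_0$ the truncated sequence $(\xi_{-k}\ldots\xi_{-1})$ has the form $(0^{k-m_0}\,\xi_{-m_0}\ldots\xi_{-1})$. First I would apply Lemma~\ref{l.leftint} to strip off the leading block of zeros, obtaining
\[
	I_{[\xi_{-k}\ldots\xi_{-1}.]}
	= I_{[0^{k-m_0}\,\xi_{-m_0}\ldots\xi_{-1}.]}
	= I_{[\xi_{-m_0}\ldots\xi_{-1}.]},
\]
so the nested sequence of compact intervals defining $I_{[\xi]}$ is eventually constant. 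By the definition of $I_{[\xi]}$ as the intersection over $m\ge1$, this immediately gives $I_{[\xi]} = I_{[\xi_{-m_0}\ldots\xi_{-1}.]}$.

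It then remains to observe that this stabilized set is non-trivial. Recall from \eqref{def:I} that
\[
	I_{[\xi_{-m_0}\ldots\xi_{-1}.]}
	= f_{\xi_{-1}}\circ\cdots\circ f_{\xi_{-m_0}}([0,1]),
\]
which is the image of the non-degenerate interval $[0,1]$ under a finite composition of continuous injective interval maps $f_0$, $f_1$ (items {(F0)}, {(F1)} of Definition~\ref{d.defF}); such a composition is itself continuous and injective, hence sends $[0,1]$ to a non-trivial closed sub-interval of $[0,1]$. This yields both claimed conclusions.

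There is no real obstacle in this argument; it is a direct book-keeping consequence of Lemma~\ref{l.leftint} together with the injectivity of the fiber maps, and it will serve later as the key input in the proof of Proposition~\ref{p.residualtrivfib}(2), where one exhibits uncountably many sequences $\xi^-$ giving non-trivial admissible domains by choosing tails that are eventually zero and controlling the finite prefix $(\xi_{-m_0}\ldots\xi_{-1})$.
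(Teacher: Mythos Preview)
Your proof is correct and follows essentially the same approach as the paper: apply Lemma~\ref{l.leftint} to stabilize the nested sequence at $I_{[\xi_{-m_0}\ldots\xi_{-1}.]}$, then use that finite truncations are always non-trivial intervals. The paper's version is terser (it simply recalls the non-triviality of finite truncations stated after~\eqref{def:I}), while you spell out the injectivity argument explicitly; otherwise the two are identical.
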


\begin{proof}
    Recall that $I_{[\xi_{-m}\ldots\xi_{-1}.]}$ for any $m\ge 1$ is a non-trivial interval. Hence, the claim follows from Lemma~\ref{l.leftint}.
\end{proof}

We now start investigating  the structure of admissible domains. Recall the choice of constants $\gamma$ and $\beta$ in  {(F1.i)} and {(F0.i)}.

\begin{defi}\label{d.defcontr}
{\rm
    We call a sequence $\xi=(\ldots\xi_{-1}.\xi_0\xi_1\ldots)\in \Sigma_2$ \emph{asymptotically contracting} if for every $m\ge 1$ we have
    that
    \begin{equation}\label{e.defcontr}
    \limsup_{m\to\infty}\gamma^{k_m}\, \be^{m-k_m}=0,\quad
    \text{ where }k_m=\sum_{i=1}^{m} \xi_{-i} .
    \end{equation}
}\end{defi}

With~\eqref{def:I} and properties {(F0.i)} and {(F1.i)}, if $\xi$ is asymptotically contracting we get that the length of the interval $I_{[\xi]}$ satisfies
\[
    \lvert I_{[\xi]}\rvert \le \lim_{m\to\infty}\gamma^{k_m}\beta^{m-k_m} =0.
\]
The following lemma is hence an immediate consequence.

\begin{lemm}\label{l.lp}
    For every asymptotically contracting sequence $\xi\in \Sigma_2$  the set $I_{[\xi]}$ consists of a single point.
    Moreover, if $\xi=(\xi_0\ldots\xi_{m-1})^\ZZ$ is periodic, then this point is an  attracting fixed point of the map $f_{[\xi_0\ldots\xi_{m-1}]}$.
\end{lemm}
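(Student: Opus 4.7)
Both assertions should follow cleanly from the Lipschitz bounds recorded in (F0.i) and (F1.i), combined with the usual contraction mapping argument. The first assertion is essentially a direct corollary of the length estimate displayed immediately before the lemma statement: the admissible domain $I_{[\xi]}$ is a nested intersection of non-empty compact intervals $I_{[\xi_{-m}\ldots\xi_{-1}.]}$, so it is non-empty and compact; and since
\[
    \lvert I_{[\xi_{-m}\ldots\xi_{-1}.]}\rvert
    \le \prod_{i=1}^m \mathrm{Lip}(f_{\xi_{-i}})\cdot \lvert [0,1]\rvert
    \le \gamma^{k_m}\beta^{m-k_m}\xrightarrow[m\to\infty]{}0
\]
by \eqref{e.defcontr}, the intersection collapses to a single point. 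So the first part needs no more than quoting the length bound.

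\textbf{Plan for the periodic case.} Set $g\eqdef f_{[\xi_0\ldots\xi_{m-1}]}$ and let $k=k_m=\sum_{i=0}^{m-1}\xi_{m-i}=\sum_{i=0}^{m-1}\xi_i$ be the number of $1$'s in one period (which equals $k_m$ by periodicity). Periodicity reduces \eqref{e.defcontr} to
\[
    (\gamma^{k}\beta^{m-k})^j=\gamma^{k_{jm}}\beta^{jm-k_{jm}}\xrightarrow[j\to\infty]{}0,
\]
which is equivalent to $\gamma^k\beta^{m-k}<1$. By (F0.i) and (F1.i) the Lipschitz constant of $g$ on $[0,1]$ is bounded by $\gamma^k\beta^{m-k}<1$, so $g\colon[0,1]\to[0,1]$ is a contraction. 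The Banach fixed point theorem then supplies a unique fixed point $p\in[0,1]$ of $g$, which is attracting with basin $[0,1]$.

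\textbf{Identifying $p$ with $I_{[\xi]}$.} This is the one step that needs a bit of care with the indexing conventions of Notation~\ref{n.cylinders}. Using $\xi_{-i}=\xi_{m-i}$ for $i=1,\ldots,m$, one checks that
\[
    f_{\xi_{-1}}\circ\cdots\circ f_{\xi_{-m}}
    =f_{\xi_{m-1}}\circ\cdots\circ f_{\xi_0}
    =f_{[\xi_0\ldots\xi_{m-1}]}=g,
\]
and, iterating $j$ times along a $jm$-block of period,
\[
    I_{[\xi_{-jm}\ldots\xi_{-1}.]}
    = g^{j}\big([0,1]\big).
\]
Therefore $I_{[\xi]}=\bigcap_{j\ge1}g^j([0,1])=\{p\}$, which gives both the singleton property and the identification of that singleton with the attracting fixed point of $f_{[\xi_0\ldots\xi_{m-1}]}$.

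\textbf{Main obstacle.} There is no deep difficulty; the only thing to watch is bookkeeping, namely matching the reverse-order composition defining $I_{[\xi_{-m}\ldots\xi_{-1}.]}$ with the forward-order composition defining $f_{[\xi_0\ldots\xi_{m-1}]}$, so as to recognize the nested sequence $I_{[\xi_{-jm}\ldots\xi_{-1}.]}$ as forward iterates $g^j([0,1])$. Once this identification is made, everything else reduces to the Banach contraction principle together with the length bound already noted in the preceding paragraph of the paper.
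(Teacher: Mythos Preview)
Your proof is correct and follows exactly the approach the paper intends: the paper in fact gives no detailed argument and simply records the lemma as ``an immediate consequence'' of the displayed length estimate $\lvert I_{[\xi]}\rvert\le\lim_{m\to\infty}\gamma^{k_m}\beta^{m-k_m}=0$. Your write-up fills in the details the paper omits---in particular the bookkeeping that identifies $I_{[\xi_{-jm}\ldots\xi_{-1}.]}$ with $g^j([0,1])$ for $g=f_{[\xi_0\ldots\xi_{m-1}]}$ and the observation that periodicity reduces the asymptotic contraction condition to $\gamma^k\beta^{m-k}<1$---and these are precisely the natural steps.
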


\begin{rema}\label{r.ascon}
{\rm Given any asymptotically contracting sequence
$\eta=\eta^-.\eta^+\in\Sigma_2$, the set $\{\eta^-.\xi^+\colon
\xi^+\in\Sigma_2^+\}$ consists of only asymptotically contracting
sequences. Observe that this set clearly is uncountable.
}\end{rema}

\begin{lemm}\label{l.upsemi}
    The function $\xi\mapsto \lvert I_{[\xi]}\rvert$ is upper semi-continuous but not continuous. However, it is continuous at every $\eta\in\Sigma_2$ for that $I_{[\eta]}$ is a single point.
\end{lemm}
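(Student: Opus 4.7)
The plan is to establish the three assertions separately and to exploit the fact that $I_{[\xi]}$ only depends on the negative coordinates of $\xi$, so that two sequences agreeing on $(\xi_{-m},\ldots,\xi_{-1})$ give the same finite-level interval $I_{[\xi_{-m}\ldots\xi_{-1}.]}$.

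First I would prove upper semi-continuity. Fix $\xi\in\Sigma_2$ and $\varepsilon>0$. Since $I_{[\xi]}=\bigcap_{m\ge1}I_{[\xi_{-m}\ldots\xi_{-1}.]}$ is a decreasing intersection of compact intervals, the lengths satisfy $\lvert I_{[\xi_{-m}\ldots\xi_{-1}.]}\rvert\searrow\lvert I_{[\xi]}\rvert$, so I may choose $m$ with $\lvert I_{[\xi_{-m}\ldots\xi_{-1}.]}\rvert\le\lvert I_{[\xi]}\rvert+\varepsilon$. For the metric $d$ on $\Sigma_2$, any $\eta$ with $d(\eta,\xi)<2^{-m}$ satisfies $\eta_{-k}=\xi_{-k}$ for $1\le k\le m$. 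Therefore
\[
I_{[\eta]}\subseteq I_{[\eta_{-m}\ldots\eta_{-1}.]}=I_{[\xi_{-m}\ldots\xi_{-1}.]},
\]
and hence $\lvert I_{[\eta]}\rvert\le\lvert I_{[\xi]}\rvert+\varepsilon$, which gives upper semi-continuity.

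Next I would derive continuity at every $\eta$ with $I_{[\eta]}$ a single point. At such $\eta$ one has $\lvert I_{[\eta]}\rvert=0$. Upper semi-continuity forces $\limsup_{\xi\to\eta}\lvert I_{[\xi]}\rvert\le 0$, while $\lvert I_{[\xi]}\rvert\ge 0$ trivially. Combining, $\lim_{\xi\to\eta}\lvert I_{[\xi]}\rvert=0=\lvert I_{[\eta]}\rvert$, proving continuity at $\eta$.

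Finally, to exhibit a point of discontinuity, I would take $\eta=0^\ZZ$. Since $f_0\colon[0,1]\to[0,1]$ is onto, $I_{[0^m.]}=f_0^m([0,1])=[0,1]$ for every $m\ge1$, so $\lvert I_{[\eta]}\rvert=1$. For each $n\ge1$ define $\xi^{(n)}\in\Sigma_2$ by $(\xi^{(n)})_i=0$ for $-n\le i\le n$ and $(\xi^{(n)})_{-k}=1$ for every $k>n$ (and, say, $0$ for every $i>n$). Then $d(\xi^{(n)},\eta)\to 0$ as $n\to\infty$. With $k_m=\sum_{i=1}^m(\xi^{(n)})_{-i}$, one has $k_m=m-n$ for $m>n$, so $\gamma^{k_m}\beta^{m-k_m}=\gamma^{m-n}\beta^n\to 0$ as $m\to\infty$ since $\gamma<1$; thus $\xi^{(n)}$ is asymptotically contracting in the sense of Definition~\ref{d.defcontr}. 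Lemma~\ref{l.lp} then gives $\lvert I_{[\xi^{(n)}]}\rvert=0$ for every $n$, yet $\lvert I_{[\eta]}\rvert=1$, so the function is discontinuous at $\eta$.

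The argument is essentially bookkeeping, so I do not expect a serious obstacle; the one delicate point is to make sure the approximating sequences used for discontinuity genuinely lie in a class for which $I_{[\,\cdot\,]}$ collapses to a point, which is why I would invoke Lemma~\ref{l.lp} via the asymptotically contracting tail construction rather than appealing only to the residual set from Proposition~\ref{p.residualtrivfib}.
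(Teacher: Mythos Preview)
Your proof is correct and follows essentially the same approach as the paper: the upper semi-continuity argument via the nested finite-level intervals and agreement of nearby sequences on their first $m$ negative coordinates, and the discontinuity example at $\eta=0^\ZZ$ using asymptotically contracting approximants and Lemma~\ref{l.lp}, are exactly what the paper does. Your derivation of continuity at trivial points (upper semi-continuity plus non-negativity at a point where the value is zero) is slightly slicker than the paper's explicit $2\varepsilon$-estimate, but it is the same idea.
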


\begin{proof}
    Let $\eta\in \Sigma_2$.
    Since $I_{[\eta_{-n}\ldots\eta_{-1}.]}$ form a nested sequence of compact intervals, their length is non-increasing and for any $\varepsilon>0$ there exists $N\ge 1$ such that $\lvert I_{[\eta]}\rvert  \le \lvert I_{[\eta_{-n}\ldots\eta_{-1}.]}\rvert  < \lvert I_{[\eta]}\rvert + \varepsilon$ for every $n\ge N$. For every $\xi\in\Sigma_2$ with $d(\xi,\eta)\le\delta\eqdef\sum_{\lvert i\rvert>N}2^{-\lvert i\rvert}$ we have $\xi_i=\eta_i$ for every $\lvert i\rvert\le n$ and thus $\lvert I_{[\xi_{-n}\ldots\xi_{-1}.]}\rvert=\lvert I_{[\eta_{-n}\ldots\eta_{-1}.]}\rvert$.
    Since $I_{[\xi_{-n}\ldots\xi_{-1}.]}$ is also nested, we obtain
    \[
        \lvert I_{[\xi]}\rvert \le \lvert I_{[\xi_{-n}\ldots\xi_{-1}.]}\rvert
    = \lvert I_{[\eta_{-n}\ldots\eta_{-1}.]}\rvert
    < \lvert I_{[\eta]}\rvert + \varepsilon,
    \]
which implies upper semi-continuity.

If $I_{[\eta]}$ is a single point only then $\lvert
I_{[\eta_{-n}\ldots\eta_{-1}.]}\rvert= \lvert
I_{[\xi_{-n}\ldots\xi_{-1}.]}\rvert<\varepsilon$ and
    \[
    \big\lvert\lvert I_{[\eta]}\rvert - \lvert I_{[\xi]}\rvert\big\rvert
    \le
    \big\lvert \lvert I_{[\eta]}\rvert - \lvert I_{[\eta_{-n}\ldots\eta_{-1}.]}\rvert \big\rvert
    +  \lvert I_{[\xi_{-n}\ldots\xi_{-1}.]}\rvert - \lvert I_{[\xi]}\rvert
        \\
    \le 2\varepsilon.
    \]
    This    implies continuity at $\eta$.

    Finally, observe that the function $\xi\mapsto \lvert I_{[\xi]}\rvert$ is not continuous in general. Indeed, taking $\eta=(0^\ZZ)$ recall that $I_{[\eta]}=[0,1]$. However, as every sequence $\xi\in\Sigma_2$ for that $\xi_{-i}=1$ for all $i$ large enough is asymptotically contracting and hence, by Lemma~\ref{l.lp},   the domain $I_{[\xi]}$ contains only a point. Clearly, such $\xi$ can be chosen arbitrarily close to $\eta$.
\end{proof}

To complete our analysis of admissible domains  let us consider the sets $I_{[\xi]}$ that contain a repelling point.

\begin{lemm}\label{l.cont}
    For every periodic sequence $\xi=(\xi_0\xi_1\ldots\xi_{m-1})^\ZZ\in \Sigma_2\setminus \{0^\ZZ\}$ for that the map $f_{[\xi_0\ldots \xi_{m-1}]}$ possesses a repelling fixed point $q$, the set $I_{[\xi]}$ is a non-trivial interval. Moreover, this interval contains points $p_\infty$ and $\widetilde p_\infty$ with $p_\infty<q<\widetilde p_\infty$ that are fixed with respect to $f^2_{[\xi_0\ldots \xi_{m-1}]}$.
\end{lemm}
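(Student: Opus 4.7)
The plan is to reduce the question to studying the single map $g\eqdef f_{[\xi_0\ldots\xi_{m-1}]}$. Since $\xi=(\xi_0\xi_1\ldots\xi_{m-1})^\ZZ$ is periodic of period $m$, for every $k\geq 1$ the composition $f_{\xi_{-1}}\circ\cdots\circ f_{\xi_{-km}}$ equals $g^k$, so by the definition of $I_{[\xi]}$
\[
	I_{[\xi]}=\bigcap_{k\geq 1}g^k([0,1]).
\]
By (F0.i) and (F1.i) the maps $f_0$ and $f_1$ are monotone and send $[0,1]$ into $[0,1]$, hence so does $g$, and each $g^k([0,1])$ is a compact interval. Therefore $I_{[\xi]}$ is a nested intersection of compact intervals, hence itself compact and connected, i.e.\ a (possibly degenerate) interval.

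Next I would construct the two required $g^2$-fixed points by a basin argument near $q$. Observe that $g^2$ is always orientation preserving, and $q$ is still a repelling fixed point of $g^2$ since $|(g^2)'(q)|=(g'(q))^2>1$. Pick $x$ slightly below $q$. If $g$ is orientation preserving, then $g(x)<x<q$ forces $g^2(x)<g(x)<x$; if $g$ is orientation reversing, then $g(x)>q$ with $|g(x)-q|>|x-q|$, so $g^2(x)<q$ with $|g^2(x)-q|>|x-q|$ and again $g^2(x)<x<q$. In either case the sequence $(g^{2n}(x))_{n\geq 0}$ is strictly decreasing and bounded below by $0$, hence converges to a point $p_\infty<q$ satisfying $g^2(p_\infty)=p_\infty$. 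A mirror argument starting from $y$ slightly above $q$ produces $\widetilde p_\infty>q$ with $g^2(\widetilde p_\infty)=\widetilde p_\infty$.

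Finally I would verify that $p_\infty$, $q$, and $\widetilde p_\infty$ all belong to $I_{[\xi]}$ and assemble the conclusion. Clearly $q\in g^k([0,1])$ for every $k$ since $g(q)=q$. For $p_\infty$: we have $p_\infty=g^{2n}(p_\infty)\in g^{2n}([0,1])$ for every $n\geq 1$, and also $p_\infty=g^{2n-1}(g(p_\infty))\in g^{2n-1}([0,1])$ for every $n\geq 1$ since $g(p_\infty)\in[0,1]$; hence $p_\infty\in I_{[\xi]}$, and symmetrically $\widetilde p_\infty\in I_{[\xi]}$. As $I_{[\xi]}$ is connected and contains the three distinct points $p_\infty<q<\widetilde p_\infty$, it contains the non-trivial interval $[p_\infty,\widetilde p_\infty]$, proving the claim.

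The only delicate point is making the monotone-convergence step rigorous in both orientation cases for $g$; once the elementary linearization near $q$ is carried out and one checks that $x$ can be chosen close enough to $q$ so that both $g(x)$ and $g^2(x)$ remain in a small neighbourhood where the repelling behaviour is effective, the remaining membership and connectedness arguments are immediate.
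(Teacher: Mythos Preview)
Your proof is correct and follows essentially the same strategy as the paper: both arguments pick a point in the local unstable manifold of $q$, iterate the orientation-preserving map $g^2=f^2_{[\xi_0\ldots\xi_{m-1}]}$ to obtain a monotone sequence converging to a $g^2$-fixed point $p_\infty<q$ (and symmetrically $\widetilde p_\infty>q$), and conclude that $I_{[\xi]}$ contains the interval $[p_\infty,\widetilde p_\infty]$. The only cosmetic difference is that the paper appeals directly to orientation-preservation of $g^2$ to place $[p_\infty,q]$ inside $I_{[\xi]}$, whereas you first identify $I_{[\xi]}=\bigcap_k g^k([0,1])$, show the three points lie in every $g^k([0,1])$, and then invoke connectedness; your concern in the final paragraph is unnecessary, since once $g^2(x)<x$ for a single $x$ near $q$ the monotonicity of $(g^{2n}(x))_n$ follows from $g^2$ being increasing, with no further local control needed.
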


\begin{proof}
    Assume that $q$ is a fixed point with respect to $f_{[\xi_0\ldots\xi_{m-1}]}$ that is repelling.
    Consider a point $p\in W^\ut_\loc( q,f_{[\xi_0\ldots\xi_{m-1}]})$ and
    note that the sequence of points $p_k\eqdef f_{[\xi_0\ldots\xi_{m-1}]}^{2k}(p)$, $k\ge 1$, is well-defined. Let us assume that $p<q$.
    Note that $f_{[\xi_0\ldots\xi_{m-1}]}^{2k}$ preserves orientation and hence
    we can consider the limit $p_\infty\eqdef\lim_{k\to\infty}p_k\in [0,1]$.
    Observe that $f_{[\xi_0\ldots\xi_{m-1}]}^2(p_\infty)=p_\infty$.
    Since $p\in W^u_\loc(q,f_{[\xi_0\ldots\xi_{m-1}]})$ and $p\ne q$, we conclude $p_\infty\ne q$.
    Then, since $f^2_{[\xi_0\ldots \xi_{m-1}]}$ preserves orientation, we can conclude that the interval $[p_\infty,q]$ is contained in $I_{[\xi]}$.

    Completely analogous, we can show that $I_{[\xi]}$ contains an interval $[q,\widetilde p_\infty]$ where $\widetilde p_\infty= f^2_{[\xi_0\ldots \xi_{m-1}]}(\widetilde p_\infty)\ne q$.
    This completes the proof of the lemma.
\end{proof}

Observe that, as a consequence of Proposition~\ref{p.posspec}, there are infinitely many periodic sequences $\xi$ such that $I_{[\xi]}$ contains a repelling periodic point and therefore is non-trivial, by Lemma~\ref{l.cont}.

We now start by analyzing further properties of the admissible
domains. Continuing Remark~\ref{r.ascon}, we show that the set of
asymptotically contracting sequences is in fact much richer.

\begin{lemm}\label{l.uncounta}
    Given any $\xi^+\in\Sigma_2^+$, the set
    \[
    \cA_{\xi^+}\eqdef
    \big\{\eta=\eta^-.\xi^+\colon \eta^-\in\Sigma_2^-\text{ and }
        \eta\text{ is asymptotically contracting} \big\}
    \] is uncountable.
\end{lemm}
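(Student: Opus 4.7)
Plan of proof. The first observation is that whether a sequence $\eta=\eta^-.\xi^+$ is asymptotically contracting depends \emph{only} on the left-infinite part $\eta^-$, since the quantity $k_m$ in~\eqref{e.defcontr} involves only the coordinates $\xi_{-i}$ with $i\ge 1$. Hence $\xi^+$ plays no role, and it suffices to exhibit uncountably many $\eta^-\in\Sigma_2^-$ for which~\eqref{e.defcontr} holds.

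The construction I would use makes $\eta^-$ agree with the all-ones sequence except at a very sparse set of positions, where a free binary choice is allowed. Concretely, fix a strictly super-linearly growing sequence of indices $n_1<n_2<\cdots$ in $\NN$, for instance $n_k=2^k$, and for each $\omega\in\{0,1\}^{\NN}$ define $\eta^-(\omega)\in\Sigma_2^-$ by $\eta_{-n_k}(\omega)\eqdef\omega_k$ and $\eta_{-i}(\omega)\eqdef 1$ for $i\notin\{n_k\}_{k\ge 1}$. The assignment $\omega\mapsto\eta^-(\omega)$ is visibly injective, producing an uncountable family of candidates.

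To conclude I would verify~\eqref{e.defcontr} for each $\eta(\omega)\eqdef\eta^-(\omega).\xi^+$. By construction one has $m-k_m(\omega)\le \#\{k\ge 1\colon n_k\le m\}\le\log_2 m$ for $n_k=2^k$, hence
\[
    k_m(\omega)\log\gamma+(m-k_m(\omega))\log\beta
    \le m\log\gamma+(\log_2 m)(\log\beta-\log\gamma).
\]
Since $\log\gamma<0$ while $\log\beta-\log\gamma>0$, the linear term $m\log\gamma$ dominates the logarithmic correction and the right-hand side tends to $-\infty$. Exponentiating gives $\gamma^{k_m(\omega)}\beta^{m-k_m(\omega)}\to 0$, so every $\eta(\omega)$ is asymptotically contracting and lies in $\cA_{\xi^+}$.

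I do not anticipate any genuine obstacle: the entire argument reduces to choosing the density of ``free'' coordinates low enough that the $m\log\gamma$ contribution wins. The only thing to watch is to pick $(n_k)$ strictly super-linear so that the fraction of $0$'s in the negative tail vanishes; any sequence with $n_k/k\to\infty$ works.
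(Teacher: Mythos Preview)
Your proof is correct. The key observation that asymptotic contraction depends only on $\eta^-$ is right, and your construction of an explicit injection from $\{0,1\}^{\NN}$ into $\cA_{\xi^+}$ via sparse free coordinates is clean; the estimate $m-k_m(\omega)\le\log_2 m$ and the subsequent bound are both correct.

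Your route is genuinely different from the paper's. The authors argue by contradiction via a Cantor diagonal argument: they assume $\cA_{\xi^+}$ is countable, enumerate the sequences in (a suitable subset of) $\cA_{\xi^+}$ as $\xi^k=(\ldots 1^{L_2^k}0^{K_2^k}1^{L_1^k}0^{K_1^k}.\xi^+)$, and then build a new asymptotically contracting sequence $(\ldots 01^{\overline L_2}01^{\overline L_1}0.\xi^+)$ by choosing each $\overline L_{k+1}$ strictly larger than $L_{k+1}^{k+1}$ and large enough to force $\gamma^{\sum\overline L_i}\beta^k\to 0$. Your approach is more direct and arguably more transparent: rather than diagonalising against a hypothetical enumeration, you produce the uncountable family in one stroke, and the verification of~\eqref{e.defcontr} is a single uniform estimate rather than an inductive choice. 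The paper's argument, on the other hand, is closer in spirit to the companion Lemma~\ref{l.uncountb}, where a diagonal argument is used again; so their method has the virtue of parallelism with the next proof.
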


\begin{proof}
    Fix any $\xi^+\in\Sigma_2^+$. To prove that the set $\cA_{\xi^+}\subset\Sigma_2$   is uncountable we use the standard Cantor diagonal argument.

    Arguing by contradiction, we assume that $\cA_{\xi^+}$ is countable.
    Notice that for every asymptotically contracting sequence $\xi=(\ldots\xi_{-1}.\xi_0\xi_1\ldots)\in\Sigma_2$,
    we cannot have $\xi_{-i}=0$ eventually for all large enough $i$.
    Let us consider only the subset $\widetilde\cA_{\xi^+}\subset\cA_{\xi^+}$ of
    sequences $\xi=(\ldots\xi_{-1}.\xi^+)$ for that $\xi_{-i}=0$ for infinitely many $i$.  Clearly, this set is also countable. Consider some numeration of it
    \[
    \widetilde\cA_{\xi^+}
        =\left\{\xi^1=(\ldots1^{L_2^1}0^{K_2^1}1^{L_1^1}0^{K_1^1}.\xi^+),
        \xi^2=(\ldots1^{L_2^2}0^{K_2^2}1^{L_1^2}0^{K_1^2}.\xi^+),
        \ldots\right\}.
    \]
    Here we allow also $K^k_1=0$ in which case the symbol $0$ is neglected.
    We now construct a ``new" sequence of $\widetilde\cA_{\xi^+}$ that is not in that numeration.
    Let $\overline L_1=L_1^1+1$ and for $k\ge 2$ choose some number
    \[
    \overline L_{k+1}> \max\big\{\overline L_k, L_{k+1}^{k+1}\big\}
    \quad\text{ such that }\quad
    \beta\cdot\gamma^{\sum_{i=1}^{k+1}\overline L_i}\cdot\beta^k<\frac{1}{2^k}.
    \]
    Observe that the sequence $(\ldots01^{\overline L_2}01^{\overline L_1}0.\xi^+)$ is asymptotically contracting.
    By construction this sequence in not in the enumeration of $\widetilde\cA_{\xi^+}$ above, that is a contradiction. Hence, $\widetilde \cA_{\xi^+}$ (and thus
   $\cA_{\xi^+}$)
   is uncountable.
\end{proof}

The following lemma shows that the set of sequences $\xi$ having a non-trivial admissible domain $I_{[\xi]}$ is also very rich.

\begin{lemm}\label{l.uncountb}
    For every closed interval $J\subset(0,1)$ and every $\xi^+\in\Sigma_2^+$ the set
   \[
    \cA_{\xi^+,J}\eqdef
    \big\{\xi=(\xi^-.\xi^+)\colon
    \xi^-\in\Sigma_2^-\text{ and }
    I_{[\xi]}\supset J
    \big\}
    \]
    is uncountable.
\end{lemm}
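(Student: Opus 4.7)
The key simplification is that the domain $I_{[\xi]}$ depends only on the negative part $\xi^-$, so once $\xi^+$ is fixed we only need to construct uncountably many admissible $\xi^-$. The condition $J\subset I_{[\xi^-.]}$ is, by taking preimages, equivalent to requiring that the backward iterate
\[
J^{(m)}\eqdef f_{\xi_{-m}}^{-1}\circ\cdots\circ f_{\xi_{-1}}^{-1}(J)
\]
be well defined and contained in $[0,1]$ for every $m\ge1$. Here $f_0^{-1}$ is an increasing homeomorphism of $[0,1]$ with $0$ attracting and $1$ repelling (since $f_0'(0)=\beta>1$ and $f_0'(1)=\lambda<1$), while $f_1^{-1}$ is defined only on $[0,f_1(0)]$, is decreasing, and sends this interval onto $[0,1]$ with $f_1^{-1}(0)=1$ (by the cycle condition $f_1(1)=0$).

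The plan is to construct $\xi^-$ of the form $\xi^-=\ldots 1\,0^{k_3}\,1\,0^{k_2}\,1\,0^{k_1}$, i.e., a concatenation of blocks $(1,0^{k_i})$ for positive integers $k_i$, chosen inductively. Starting from $J^{(0)}=J\subset(0,1)$, which has positive lower endpoint and upper endpoint strictly below $1$, assume that after $i-1$ blocks the current interval $J^{(m_{i-1})}\subset(0,1)$ with $m_{i-1}=\sum_{j<i}(k_j+1)$ still has these two properties. Since $0$ is attracting for $f_0^{-1}$, iterating $f_0^{-1}$ shrinks this interval toward $0$, so there is a least integer $K_i\ge1$ such that $f_0^{-k}(J^{(m_{i-1})})\subset[0,f_1(0))$ for every $k\ge K_i$; pick any $k_i\ge K_i$. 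The resulting interval $f_0^{-k_i}(J^{(m_{i-1})})$ has positive lower endpoint and upper endpoint strictly less than $f_1(0)$, so $f_1^{-1}$ is well defined on it, and $J^{(m_i)}\eqdef f_1^{-1}\circ f_0^{-k_i}(J^{(m_{i-1})})$ still has positive lower endpoint (because $f_0^{-k_i}$ of the upper endpoint is strictly less than $f_1(0)$, whence $f_1^{-1}$ of it is strictly positive) and upper endpoint strictly below $1$ (because the lower endpoint before inversion is strictly positive, whence $f_1^{-1}$ of it is strictly less than $1$). The inductive hypothesis is preserved, and the intermediate iterates $J^{(m_{i-1}+j)}$ for $0<j<k_i$ stay in $[0,1]$ automatically since $f_0^{-1}([0,1])=[0,1]$.

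This construction leaves infinitely many admissible values for each $k_i$, so the set of admissible sequences $(k_1,k_2,\ldots)\in\prod_{i\ge1}\{K_i,K_i+1,\ldots\}$ is uncountable. Distinct sequences $(k_i)$ yield distinct $\xi^-$, since the set of negative indices where $\xi_{-}=1$ equals $\{-m_1,-m_2,\ldots\}$ and thus determines $(k_i)$. Each such $\xi^-$ satisfies $J\subset I_{[\xi^-.\xi^+]}$ by construction, proving $\cA_{\xi^+,J}$ is uncountable. The only delicate point is the inductive propagation that the current interval stays strictly inside $(0,1)$; this hinges on the cycle relation $f_1(1)=0$ together with monotonicity of $f_0$ and $f_1$, and no further distortion estimates are needed.
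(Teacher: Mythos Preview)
Your proof is correct and follows essentially the same construction as the paper: both build $\xi^-$ as an infinite concatenation of blocks $(1\,0^{k_i})$ by iterating $f_0^{-1}$ until the current interval lands in $(0,f_1(0))$ and then applying $f_1^{-1}$, observing that any $k_i$ at least as large as the threshold $K_i$ works. The only notable difference is in deducing uncountability: the paper invokes a Cantor diagonal argument on a restricted enumeration, whereas you argue directly that infinitely many admissible choices at each stage yield an uncountable tree of sequences. Your route is cleaner, though note that your product notation $\prod_{i\ge1}\{K_i,K_i+1,\ldots\}$ is slightly imprecise since $K_i$ depends on the earlier choices $k_1,\ldots,k_{i-1}$; the intended tree argument is nonetheless valid.
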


\begin{proof}
    First, let us show that $\cA_{\xi^+,J}$ is nonempty.
    Note that given any $J\subset(0,1)$, there exists a number $k=k(J)\ge 1$
    such that $f_0^{-k}(J)\subset (0,c_1)$, where $c_1= f_1(0)$. Let now  $J_0\eqdef J$ and $K_1\eqdef k(J_0)=k(J)$ and note that by definition in equation~\eqref{def:I} we have
    \[
    J_0\subset  f_0^{K_1}([0,c_1]) =(f_0^{K_1}\circ f_1)([0,1]) =I_{[10^{K_1}.]} .
    \]
    For $\ell\ge 0$ let us now define recursively
    \[
    	K_{\ell+1}\ge
	k(J_\ell)\quad\text{ such that }\quad
	f_0^{-K_{\ell+1}}(J_\ell)\subset (0,c_1)
    \]
    and
    \[
    J_{\ell+1} \eqdef (f_1^{-1}\circ f_0^{-K_{\ell+1}})(J_\ell).
    \]
   With such a choice, we have
    \[
        J_{\ell+1}
    = (f_1^{-1}\circ f_0^{-K_{\ell+1}}\circ\ldots \circ f_1^{-1}\circ f_0^{-K_1} )(J)
    = f_{[10^{K_{\ell+1}}\ldots 10^{K_1}.]}(J)
    \subset [0,1]
    \]
    and hence $J\subset I_{[10^{K_{\ell+1}} \ldots 10^{K_1}.]}$ for every $\ell\ge 1$  (recall Remark~\ref{r.Lor}).
    This implies $J\subset I_{[\xi]}$ for the sequence $\xi=(\ldots 10^{K_\ell}\ldots10^{K_1}.\xi^+)$. This proves that $\cA_{\xi^+,J}$ is nonempty.

	We point out that, since we have $f_0^{-m}(J_\ell)\subset (0,c_1)$ for every $m> K_{\ell+1}$, we can repeat the construction above replacing in each step $K_\ell$ by any number $\overline{K}_\ell> K_\ell$. In this way, we get a new sequence $\overline \xi$ such that $J\subset I_{[\overline \xi]}$. In particular, this implies that the set of sequences $\xi$ such that $J\subset I_{[\xi]}$ is infinite.

 The above remark guarantees that the set $\cA_{\xi^+,J}$ is infinite. To prove that $\cA_{\xi^+,J}$ is uncountable we use again the Cantor diagonal argument.  Arguing by contradiction, we assume that $\cA_{\xi^+,J}$ is countable. Let us consider the subset $\widetilde\cA_{\xi^+,J}\subset\cA_{\xi^+,J}$ defined by
    \[\begin{split}
        \widetilde\cA_{\xi^+,J}\eqdef
    \Big\{\xi&=(\ldots 0^{K_3}10^{K_2}10^{K_1}.\xi^+)
    \in\Sigma_2\colon\\
    &\text{ there are infinitely many blocks of $0$s of length }K_\ell\\
    &\text{ satisfying }
    K_{\ell+1}>K_\ell\text{ for all }\ell\ge 1 \\
    &\text{ and }
    I_{[\xi_{-m}\ldots\xi_{-1}.]}\supset J\text{ for all }m\ge 1\Big\}
    \end{split}\]
    and consider its numeration
    \[
    \widetilde\cA_{\xi^+,J}=\left\{
            \xi^1=(\ldots10^{K_\ell^1}\ldots10^{K_1^1}.\xi^+),
            \xi^2=(\ldots10^{K_\ell^2}\ldots10^{K_1^2}.\xi^+),
        \ldots\right\}.
    \]
Let now $J_0'=J$ and  $\overline K_1= \max\{K_1^1+1, k(J_0')+1\}$ with
    $J_0'\subset f_{[0^{\overline K_1}]} ([0,c_1])$ and write
    $J_1'\eqdef (f_1^{-1} \circ f_0^{-\overline K_1})([0,1])$. Note that
    $J_1'\subset I_{[10^{\overline K_1}]}$.
   Arguing inductively,
    for $\ell\ge 2$ let us choose a number $\overline K_{\ell+1}$ such that
    \[
    \overline K_{\ell+1}> \max\big\{\overline K_\ell, K_{\ell+1}^{\ell+1}, k(J_\ell')+1\big\}
    \quad\text{and}\quad
J_{\ell+1}' \eqdef (f_1^{-1}\circ f_0^{- \overline{K}_{\ell+1}})(J_\ell').
   \]
Bearing in mind the above remark and arguing as
above, these choices give
$$
    I_{[10^{\overline K_{\ell+1}}10^{\overline K_\ell}1\ldots10^{\overline K_1}.]}\supset J_{\ell+1}'.
$$
    Clearly, none of the  sequence $(\ldots10^{\overline K_\ell}\ldots10^{\overline K_1}.\xi^+)$ is in $\widetilde\cA_{\xi^+,J}$,
    contradicting that $\widetilde\cA_{\xi^+,J}$ is countable. Hence $\cA_{\xi^+,J}$ is uncountable.
\end{proof}

We finally provide the proof of our proposition.

\begin{proof}[Proof of Proposition~\ref{p.residualtrivfib}]
    We first prove that the set of sequences with trivial spines is residual. As an immediate consequence of Definition~\ref{d.defcontr}, given any sequence $\xi=(\ldots\xi_{-1}.\xi_0\xi_1\ldots)\in\Sigma_2$, for any $m\ge 1$  the sequence  $\xi'=(\ldots11\xi_{-m}\ldots\xi_{-1}.\xi_0\xi_1\xi_2\ldots)$ is asymptotically contracting. Moreover, by Lemma~\ref{l.lp} the domain $I_{[\xi']}$ is a single point only. Clearly, the distance between $\xi$ and $\xi'$ can be made arbitrarily small when increasing $m$. This proves that the sequences $\xi$ such that $I_{[\xi]}$ is trivial are dense in $\Sigma_2$.
    Given $\varepsilon>0$, consider the set
    \[
    \cA_\varepsilon\eqdef
    \big\{\xi\in\Sigma_2\colon \lvert I_{[\xi]}\rvert\le\varepsilon\big\}.
    \]
    The second statement in Lemma~\ref{l.upsemi} in particular implies that $\cA_\varepsilon$ contains an open and dense subset of $\Sigma_2$. Thus, the set $\bigcap_{n\ge1}\cA_{1/n}$ contains a residual subset that consists of sequences for that $I_{[\xi]}$ is a single point. This proves the first part of the proposition.

	We now look at the set of sequences with non-trivial spines.
    Given any sequence $\xi=(\ldots\xi_{-1}.\xi_0\xi_1\ldots)\in\Sigma_2$, recall that $I_{[\xi_{-m}\ldots\xi_{-1}.]}$ for any $m\ge 1$ is
    a non-trivial interval. By Lemma~\ref{l.leftint} we have $I_{[0^k\xi_{-m}\ldots\xi_{-1}.]} = I_{[\xi_{-m}\ldots\xi_{-1}.]}$ for any $k\ge 1$.
    Further, recall that $I_{[\xi_{-m}\ldots\xi_{-1}.]}=I_{[\xi_{-m}\ldots\xi_{-1}.\xi_0\ldots\xi_m]}$. Thus, the sequence  $\xi'=(\ldots00\xi_{-m}\ldots\xi_{-1}.\xi_0\xi_1\ldots)$ satisfies $I_{[\xi']}= I_{[\xi_{-m}\ldots\xi_{-1}.]}$ and hence contains an interval. Clearly, the distance between $\xi$ and $\xi'$ can be made arbitrarily small when increasing $m$. Together with Lemma~\ref {l.uncountb}, this proves the second part of the proposition.
\end{proof}

\subsection{Gap in the Lyapunov spectrum}

We finally establish some gap in the spectrum of Lyapunov exponents.

\begin{prop}[Spectral gap]\label{p.beta}
    Let
    \[
                \widetilde\chi\eqdef
                \sup\big\{\chi(p,\xi)\colon
                p\in[0,1],\, \xi\in\Sigma_2\setminus E\big\},
    \]
    where
	\[
    		E\eqdef
		\{\xi=(\xi^-.\xi^+)\colon \xi^+=(\xi_0\ldots\xi_k 0^\NN),k\ge 0\}.
	\]	
    Then $\widetilde\beta\eqdef\exp\widetilde\chi<\beta$.
\end{prop}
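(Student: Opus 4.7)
\begin{demo}
My approach is to show that for any orbit $(p,\xi)$ with $\xi\notin E$ the derivative grows at a rate strictly below $\be^n$. The mechanism is the following: $f_0'\le\be$ on $[0,1]$ with equality only at the repelling fixed point $0$, so the trivial bound $\be$ is ``wasted'' whenever the orbit moves away from $0$, while $f_1$ always contributes a contraction $|f_1'|\le\gamma<1$. Since $\xi\notin E$ the orbit uses $f_1$ infinitely often, so both loss mechanisms accumulate.

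First I would write $\xi^+=0^{a_1}10^{a_2}1\ldots$, put $x_0^{(1)}\eqdef p$, $y_j\eqdef f_0^{a_j}(x_{j-1}^{(1)})$, $x_j^{(1)}\eqdef f_1(y_j)$ and $N_k\eqdef a_1+\cdots+a_k+k$, so that $\chi(p,\xi)\le \limsup_k \frac{1}{N_k}\log D_k$ with $D_k\eqdef \prod_{j=1}^k (f_0^{a_j})'(x_{j-1}^{(1)})\,|f_1'(y_j)|$. Fix $\delta,\delta'>0$ small and split the $a_j$ iterates of block $j$ into consecutive segments of lengths $T_0^{(j)}$, $T_{\mathrm{m}}^{(j)}$, $T_1^{(j)}$ living in $[0,\delta]$, $(\delta,1-\delta')$, $[1-\delta',1]$ respectively; the middle transit time $T_{\mathrm{m}}^{(j)}\le M$ is uniformly bounded since $f_0(x)>x$ on the compact interval $[\delta,1-\delta']$. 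Applying Lemma~\ref{l.distoe} on fundamental domains contained in $[0,\delta]$ and $[1-\delta',1]$, together with the $C^1$-linearizations of $f_0$ at $0$ (where $f_0'(0)=\be$) and at $1$ (where $f_0'(1)=\la$), yields
\[
	\log (f_0^{a_j})'(x_{j-1}^{(1)})
	\le T_0^{(j)}\log\be+ T_1^{(j)}\log(\la+\varepsilon_{\delta'})+C,
\]
with $\varepsilon_{\delta'}\to 0$ as $\delta'\to 0$. Together with $\log|f_1'(y_j)|\le\log\gamma$ this bounds block $j$'s contribution to $\log D_k$.

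The crucial step is to link consecutive blocks. From $x_j^{(1)}=f_1(y_j)\in[\gamma'(1-y_j),\gamma(1-y_j)]$ (property (F1.i)) and the linearizations $x_{j-1}^{(1)}\approx\be^{-T_0^{(j)}}\delta$ and $1-y_{j-1}\approx\la^{T_1^{(j-1)}}\delta'$, for $j\ge 2$ with $x_{j-1}^{(1)}\le\delta$ one derives the amortization
\[
	T_0^{(j)}\le \frac{\log(1/\la)}{\log\be}\,T_1^{(j-1)}+C'.
\]
Substituting this into $\sum_j T_0^{(j)}\log\be$ replaces it by $\sum_j T_1^{(j-1)}\log(1/\la)$, which combined with $\sum_j T_1^{(j)}\log(\la+\varepsilon_{\delta'})$ amounts to $\sum_j T_1^{(j)}\log(1+\varepsilon_{\delta'}/\la)$, an arbitrarily small multiple of $\sum T_1^{(j)}$. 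Using $N_k\ge \sum_j(T_0^{(j)}+T_1^{(j)})$ one obtains
\[
	\frac{\log D_k}{N_k} \le \max\Big\{\log\mu_\delta,\ \frac{\log\be}{2+\log\be/\log(1/\la)}\Big\}+o(1),
\]
where $\mu_\delta\eqdef\sup_{[\delta,1]}f_0'<\be$ controls blocks whose orbit never enters $[0,\delta]$, and the second term governs the ``near-$0$/near-$1$'' alternating regime via the amortization above. Both bounds are strictly below $\log\be$ and independent of $(p,\xi)$, so $\widetilde\chi<\log\be$, i.e.\ $\widetilde\be<\be$.

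The principal obstacle is making the amortization uniform in $(p,\xi)$: the tempered-distortion errors $\rho_k\to 0$ from Lemma~\ref{l.distoe} must be absorbed into the $o(1)$ term, and the boundary cases---the first and last block, and blocks whose starting or ending point lies in the transit region $(\delta,1-\delta')$ rather than in the near-$0$ or near-$1$ zones---do not fit the dichotomy and require a separate case-by-case analysis before the telescoping step is carried out.
\end{demo}
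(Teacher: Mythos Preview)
Your proposal is correct and follows essentially the same strategy as the paper: both argue that any visit to a neighborhood $H_0=[0,\delta]$ of $0$ must be preceded, via $f_1$, by a stay in $H_0'=[1-\delta',1]$, and derive the same amortization inequality $T_0^{(j)}\lesssim \frac{|\log\lambda|}{\log\beta}\,T_1^{(j-1)}$ (in the paper's notation, $M_k\lesssim \frac{|\log\lambda|}{\log\beta_0'}\,N_k$, obtained from the lower bound $p_{r_k}\gtrsim\lambda^{N_k}\delta$) to show that the expansion accrued near $0$ is compensated by the preceding contraction near $1$. The paper organizes the decomposition by indexing consecutive visits to $H_0$ through the sequences $(i_k,r_k,e_k)$ rather than by blocks of $0$s in $\xi$, and handles the short-loop boundary cases you flag as the principal obstacle via the direct estimate $\max_{1\le\ell\le L}\frac{\ell\log\beta+\log\gamma}{\ell+1}<\log\beta$.
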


\begin{proof}
The idea of the proof is quite simple although the proof itself is a bit technical. Note that the exponent $\chi(p,\xi)$ could be close to $\beta$ only if the orbit $\{f_{[\xi_0\ldots\,\xi_m]}(p)\}_{m\ge1}$ stays arbitrarily close to $0$ infinitely often. Note also that each visit close to $0$ was preceded by a visit close to $1$. This implies that the effect of expansion (iterates of $f_0$ close to $0$) will be compensated by a (previous) contraction (iterates of $f_0$ close to $1$ and some iterates of $f_1$) that will force the exponent to decrease. Now we will provide the details.

For simplicity of the exposition we assume that $f_0$ is non-linear in a neighborhood of $0$. A similar proof can be done in the general case.
Consider a number $\de>0$ close to $0$ and define the sets
$$
H_0\eqdef [0,\delta],\quad
H'_0\eqdef f_1^{-1} (H_0) = [h_0',1].
$$
Recalling condition (F01), we have that if $\delta$ is small enough then
\begin{equation}\label{e.H0}
	f_1(H_0)\cap H_0 =\emptyset,
	\quad f_1(H'_0)\cap H'_0 =\emptyset,
	\quad f_1([0,1])\cap H_0' =\emptyset.
\end{equation}

We first introduce some constants that will be used throughout the proof:
\begin{equation}\label{e.ines}
	\widetilde\alpha\eqdef \min_{H_0'}\,\lvert f_1'\rvert,
	\quad\widehat\alpha\eqdef \max_{H_0'}\,\lvert f_1'\rvert.
\end{equation}
Note that $\widetilde\alpha/\widehat\alpha\sim 1$ if $\delta$ is small enough.
Let us further define
\[
	\beta_0\eqdef \sup\{f_0'(x)\colon x\notin H_0\}
		= \sup\{f_0'(x),f_1'(x)\colon x\notin H_0\}<\beta.
\]
Observe that $\beta_0<\beta$ follows from our simplifying assumption that $f_0$ is non-linear close to $0$. Let
$$
	\be_0' \eqdef \inf\{f_0'(x) \, \colon \, x\in H_0\}<\beta,\quad
	\la'_0 \eqdef \sup\{f_0'(x) \, \colon \, x\in H'_0\}.
$$
Note that  if $\de$ is small then $\la_0'$ and $\be_0'$ are close to $\la$ and $\be$ and thus
\begin{equation}\label{e.derivatives}
	 \lvert\log \la\rvert\, \frac{\log \beta}{\log\be_0'}
	- \lvert\log \la_0'\rvert  +\frac{\log \be}{2}< \log \be_0'.
\end{equation}

 To prove the proposition, note that it is enough to consider the case that $\xi_i=0$ for infinitely many $i\ge 1$. Indeed, otherwise, because $f_1$ is a contraction, we have $\chi(p,\xi)<0$. Moreover, by replacing $p$ by some iterate, we can assume that $p\ne 0$. Further, we can assume that the orbit $\{f_{[\xi_0\ldots\,\xi_m]}(p)\}_{m\ge 0}$ hits the interval $H_0$ infinitely many times. Indeed, otherwise this orbit is contained in the interval $(\de,1]$ in which the derivatives $f_0'$ and $f_1'$ are upper bounded by $\be_0'$ and thus the Lyapunov exponent of $\chi(p,\xi)$ is upper bounded by $\log \be_0'$. Hence, without loss of generality, possibly replacing $p$ by some positive iterate, we can assume that $p \in H_0$ and $f_{\xi_0}(p)\notin H_0$.

For every $m\ge 0$ let us write
\[
	p_{m+1}\eqdef f_{[\xi_0\ldots\,\xi_m]}(p).
\]	
We define three increasing sequences $(r_k)_k$, $(e_k)_k$, $(i_k)_k$ of positive integers as follows (compare Fig.~\ref{f.seiiq}): $i_k< r_k \le e_k<i_{k+1}$,
\begin{equation}\label{e.rkek}
	p_j\in H_0
	\quad\text{ if and only if }\quad
	r_k\le j \le e_k \text{ for some }k,
\end{equation}
and
\begin{equation}\label{e.ik}
	p_j\in H_0'\quad\text{ if and only if }\quad
	i_k\le j\le r_k-1 \text{ for some }k.
\end{equation}
\vspace{0.5cm}
\begin{figure}[h]
\begin{minipage}[c]{\linewidth}
\centering
\begin{overpic}[scale=.5]{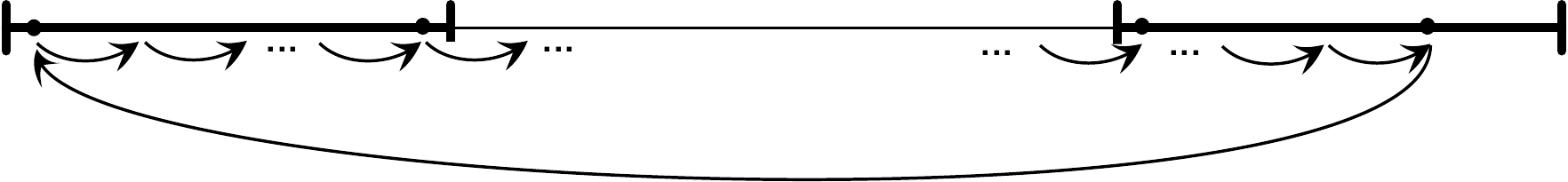}
    \put(-0.5,4){$0$}	
    \put(99,4){$1$}
    \put(1.4,14){$p_{r_k}$}
    \put(26,14){$p_{e_k}$}
    \put(71,14){$p_{i_k}$}
    \put(90,14){$p_{r_k-1}$}
    \put(13,17){$H_0$}
    \put(82,17){$H_0'$}
  \end{overpic}
\end{minipage}
\caption{Definition of the sequences $(r_k)_k$, $(e_k)_k$, $(i_k)_k$}
\label{f.seiiq}
\end{figure}

Note that indeed by our choices the only way of entering $H_0$ is by coming from $H_0'$ after applying $f_1$ and the only way from entering in $H_0'$ is after applying $f_0$. More precisely:
	Since $f_1(H_0)\cap H_0=\emptyset$, recall \eqref{e.H0}, we have that
$\xi_j=0$ for every index $j\in \{r_k,\ldots,e_k-1\}$ whenever $r_k<e_k$.
By definition of $r_k$ we have $p_{r_k}\in H_0$ and $p_{r_k-1}\not
\in H_0$ and thus $p_{r_k-1}>p_{r_k}$. Since $f_0$ is an increasing function, we have that $p_{r_k}\ne f_0(p_{r_k-1})$. Thus,
\[
	p_{r_k}=f_1(p_{r_k-1}),\quad
	p_{r_k-1}\in H_0',\quad \text{ and }\quad
	\xi_{r_k-1}=1.
\]	
Since $f_1([0,1])\cap H_0'=\emptyset$, recall \eqref{e.H0},  we have $\xi_j=0$ for every index $j\in \{i_k-1,\dots,r_k-2\}$. In particular this implies that
\begin{equation}\label{e.zeros}
	p_{i_k} \in [h_0',f_0(h_0')).
\end{equation}

By the definitions of $H_0$ and of the sequences above we have
that $p_j\notin H_0$ for all $j\in \{e_k+1, i_{k+1}-1\}$, and therefore
\begin{equation}
\label{e.estimate1}
 	\log\big\lvert
	\big( f_{[\xi_{e_k+1}\,\ldots \,\xi_{i_{k+1}-1}]} \big)'(p_{e_{k}+1})\big\rvert
	< (\beta_0)^{i_{k+1}-e_k -1}.
\end{equation}

Let us denote by $N_k$ the number of iterates of the point $p_{i_k}$ in $H_0'$, that is
\[
	N_k\eqdef r_k-i_k-1.
\]	

\begin{claim}\label{c.alphaa}
	We have
	$\displaystyle p_{r_k}\ge
	\widetilde\alpha \,\widehat\alpha^{-1}\lambda^{N_k+1}\,\delta$.
\end{claim}

\begin{proof}
	Recall that by~\eqref{e.zeros} we have $p_{i_k}< f_0(h_0')$ and hence
	\[
		f_0^{N_k}(p_{i_k}) = p_{r_k-1}<
		f_0^{N_k}(f_0(h_0')) = f_0^{N_k+1}(h_0').
	\]	
	Since $h_0'=f_1^{-1}(\delta)$, with~\eqref{e.ines} we can estimate $1- h_0'\ge \widehat\alpha^{-1}\delta$. Hence, by (F0.i) we can estimate
	\[
		1- p_{r_k-1}
		> 1 -  f_0^{N_k+1}(h_0')
		\ge
		\lambda^{N_k+1}\,\widehat\alpha^{-1}\delta\,.
	\]
	Finally, since $f_1(0)=1$, we have
	\[
		p_{r_k}=f_1(p_{r_k-1})\ge
		\widetilde\alpha\,\lambda^{N_k+1}\,\widehat\alpha^{-1}\delta,
	\]
	which proves the claim.
\end{proof}

By Claim~\ref{c.alphaa}, $e_k-r_k$ is bounded from above by $\widetilde M_k+1$, where $\widetilde M_k$ is defined by
\[
	(\beta_0')^{\widetilde M_k} \,\widetilde\alpha\,\widehat\alpha^{-1} \la^{N_k+1} \, \de
	= \de,
\]	
that is,
\begin{equation}\label{e.utt}
	\widetilde M_k\le M_k\eqdef
	\left\lfloor
	\frac{(N_k+1)\, \lvert \log \la\rvert - \log\widetilde\alpha/\widehat\alpha}
		{\log \be_0'}\right\rfloor+1.
\end{equation}

Let us now estimate the finite-time Lyapunov exponents associated to each of the finite sequences $(\xi_{i_k}\dots \xi_{e_k})$.

\begin{claim}\label{e.2loop}
There exists $\beta_0''<\beta$ such that	
\[
	\frac{\log\big\lvert \big( f_{[\xi_{i_k}\ldots \,\xi_{e_k}]}\big)'(p_{i_k})
 	\big\rvert}{e_k-i_k+1} \le \log\beta_0''.
\]
\end{claim}

\begin{proof}
We can freely assume that the number of iterations in the interval $H_0$ is the maximum possible (clearly this is the
case that maximizes the derivative $f_{[\xi_{i_k}\dots\, \xi_{e_k}]}$). That is, let us suppose that $e_k-r_k=M_k$. Recalling the definition of $\widehat\alpha$ in~\eqref{e.ines}, with the above we obtain
\[
\frac{\log\big\lvert \big( f_{[\xi_{i_k}\ldots\, \xi_{e_k}]}\big)'(p_{i_k})
	\big\rvert}{e_k-i_k+1}
 \le \frac{M_k\log\be + \log\widehat\alpha - N_k\,\lvert \log \la_0'\rvert}
		{M_k+N_k+1}.
\]
From~\eqref{e.utt} we conclude
\[\begin{split}		
&\frac{\log\big\lvert \big( f_{[\xi_{i_k}\ldots\, \xi_{e_k}]}\big)'(p_{i_k})
	\big\rvert}{e_k-i_k+1}\\
&\le  \frac{1}{M_k+N_k+1}
	\left[
	\left(
		(N_k + 1) \, \lvert\log \la \rvert\,
		- \log\frac{\widetilde\alpha}{\widehat\alpha}
	\right)\frac{\log \be}{\log \be_0'}
			+ \log\beta
 			- N_k\,\lvert\log\lambda_0'\rvert
	\right]\\
&\le
	\frac{N_k}{M_k+N_k+1}\left(
		\lvert\log \la \rvert\,\frac{\log \be}{\log \be_0'}
		- \lvert\log\lambda_0'\rvert
		\right)
	+ \frac{\log\beta}{M_k+N_k+1}	\\
&\phantom{\le}		
	+ \frac{1}{M_k+N_k+1}
		\left( \lvert\log \la \rvert
 		- \log\frac{\widetilde\alpha}{\widehat\alpha}
	   \right)\frac{\log \be}{\log \be_0'}   \\
&\le
	\left(
		\lvert\log \la \rvert\,\frac{\log \be}{\log \be_0'}
		- \lvert\log\lambda_0'\rvert
		+ \frac{\log\beta}{2}
		\right)\\
&\phantom{\le}		
	+ \frac{1}{M_k+N_k+1}
	\left[
		\lvert\log \la \rvert - \log\frac{\widetilde\alpha}{\widehat\alpha}
	\right]  \frac{\log \be}{\log \be_0'} ,
\end{split}\]
where in the last line we also used $M_k+N_k+1\ge2$.
By~\eqref{e.derivatives}
\[
\frac{\log\big\lvert \big( f_{[\xi_{i_k}\ldots\, \xi_{e_k}]}\big)'(p_{i_k})
	\big\rvert}{e_k-i_k+1}
< \log \be_0'
+\frac{1}{M_k+N_k+1}
	\left[
		\lvert\log \la \rvert - \log\frac{\widetilde\alpha}{\widehat\alpha}
	\right]  \frac{\log \be}{\log \be_0'} .
\]

Let $L\ge1$ be large enough such that the right most term in the last estimate is less than $\varepsilon\eqdef(\log\beta-\log\beta_0')/2$ whenever $M_k+N_k+1\ge L$ and thus the claim is proved if $M_k+N_k+1\ge L$. In the finitely many remaining possible cases with $\lvert f_1'\rvert\le\gamma$ (recall (F1.i)) we can estimate that
\[
	\frac{\log\big\lvert \big( f_{[\xi_{i_k}\ldots\, \xi_{e_k}]}\big)'(p_{i_k})
	\big\rvert}{e_k-i_k+1}
	\le  \max_{\ell=1,\ldots,L}\frac{\log\,(\beta^\ell\,\gamma)}{\ell+1}
	<  \max_{\ell=1,\ldots,L}\frac{\ell}{\ell+1}\log\,\beta
	< \log\beta.
\]
Hence, with
\[
	\beta_0''\eqdef
	\exp\max\{\log\beta_0'+\varepsilon,\max_\ell \frac{\ell}{\ell+1}\log\beta\,\}
\]	
we have $\beta_0''<\beta$. This proves the claim.
\end{proof}

We are now ready to get an upper bound for $\chi(p,\xi)$. It is
enough to consider segments of orbits corresponding to exit times
$e_k$ and starting at the point $p_1$:
\[
\begin{split}
\frac{ \log\big\lvert \big( f_{[\xi_{1}\ldots \,\xi_{e_k}]} \big)'(p_{1})
	\big\rvert}{e_k}
\le&  \sum_{j=0}^{k}  \left( \frac{e_j-i_j+1}{e_k} \right) \,
\frac{ \log\big\lvert \big( f_{[\xi_{i_j}\ldots\, \xi_{e_j}]}\big)'(p_{i_j})
	\big\rvert}{e_j-i_j+1}  \\
& + \left( \frac{i_{j+1}-e_j-1}{e_k} \right)\,
 \frac{
\log\big\lvert \big( f_{[\xi_{e_j+1}\ldots\, \xi_{i_{j+1}-1}]}\big)'(p_{e_{j+1}})
	\big\rvert}{i_{j+1}-e_j-1}.
\end{split}
\]
By equations \eqref{e.estimate1} and Claim~\ref{e.2loop} we get that
this derivative is bounded from above by $\max\{\log \be_0,\log\beta_0''\}<\log\beta$. This completes the proof of the proposition.
\end{proof}

\section{Transverse homoclinic intersections}\label{s.tranhomint}

\subsection{The maximal invariant set}

In this section we are going to prove that the maximal invariant
set of $F$ in the cube $\CC$  (recall the definition
in~\eqref{e.Lambdapm}) is the homoclinic class of a saddle of index $u+1$.

\begin{theor}\label{th.homoclinicbarQ}
    Given the periodic point $ q^\ast= q^\ast_{I_0}\in[0,1]$ and the
expanding sequence $\xi=\xi(I_0)$ provided by
Lemma~\ref{l.newfixedexpandingpoint} applied to the interval $I_0=[f_0^{-1}(b_0),b_0]$, consider  the point
$\widehat q=\varpi^{-1}(\xi)$. Then the  periodic point $
Q^\ast=(\widehat q, q^\ast)$  has index $u+1$ and
$\La_F=H( Q^\ast,F)$.
\end{theor}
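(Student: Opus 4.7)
The plan is first to check that $Q^\ast$ is a periodic saddle of the correct index, then to handle the trivial inclusion, and finally to prove the non-trivial inclusion $\Lambda_F\subset H(Q^\ast,F)$ by producing transverse homoclinic intersections of $Q^\ast$ accumulating on any given point of $\Lambda_F$.

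For the index: since $\xi=\xi(I_0)$ is a finite word, the corresponding periodic sequence $\xi^\ZZ$ yields a periodic point $\widehat q=\varpi^{-1}(\xi^\ZZ)$ of $\Phi|_\Gamma$ which is a saddle of the horseshoe with $u$-dimensional unstable direction. By Lemma~\ref{l.newfixedexpandingpoint}, $q^\ast$ is an expanding fixed point of $f_{[\xi]}$, so the central direction is expanding at $Q^\ast$; combining with the $DF$-invariant dominated splitting~\eqref{e.splitt}, $Q^\ast$ is a periodic saddle of $F$ of index $u+1$. The inclusion $H(Q^\ast,F)\subset\Lambda_F$ is immediate since $\Lambda_F$ is the maximal invariant set in $\CC$ and every transverse homoclinic orbit of $Q^\ast$ is contained in $\CC$.

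For the reverse inclusion, fix $X=(\widehat x,x)\in\Lambda_F$ and a small neighborhood $U$ of $X$. In the base, since $\Phi|_\Gamma$ is conjugate to the full two-shift, transverse homoclinic points of $\widehat q$ are dense in $\Gamma$, so I choose $\widehat y\in U$ with itinerary of the form $(\ldots\xi\,\xi\,\eta_{-N}\ldots\eta_{-1}.\eta_0\ldots\eta_M\,\zeta_0\ldots\zeta_k\,\xi\,\xi\,\ldots)$, where $(\eta_{-N}\ldots\eta_M)$ matches the itinerary of $\widehat x$ on a long window and $(\zeta_0\ldots\zeta_k)$ is an auxiliary transitional block chosen as follows. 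For the fiber coordinate I apply the sweeping property (Proposition~\ref{l.expiti}) to a small interval $H$ around $x$: there exists a finite word $(\zeta_0\ldots\zeta_k)$ so that $f_{[\eta_0\ldots\eta_M\zeta_0\ldots\zeta_k]}(H)$ contains the fundamental domain $[f_0^{-2}(b_0),f_0^{-1}(b_0)]\subset W^u(q^\ast,f_{[\xi]})$ guaranteed by Lemma~\ref{l.newfixedexpandingpoint}. Hence there is $y\in H$ whose forward $F$-orbit lands exactly on $Q^\ast$ after finitely many steps and then stays on it; meanwhile, since $\widehat y$'s past is eventually $\xi^\ZZ$ and $q^\ast$ is expanding for $f_{[\xi]}$, the backward $F$-orbit of $(\widehat y,y)$ converges to the orbit of $Q^\ast$. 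Thus $(\widehat y,y)\in W^s(Q^\ast,F)\cap W^u(Q^\ast,F)$ and, since $W^s(Q^\ast,F)$ is tangent to $E^s$ and $W^u(Q^\ast,F)$ is tangent to $E^u\oplus E^c$ with $E^s\oplus E^c\oplus E^u$ spanning the ambient tangent space, this intersection is transverse. As $U$ is arbitrary, $X\in H(Q^\ast,F)$.

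The main obstacle I anticipate is the coordination between the base itinerary and the fiber coordinate: the point $y$ provided by the sweeping property must lie in the admissible domain of the full bi-infinite sequence so that the corresponding orbit actually remains in $\CC$, and one must check that adding the tail blocks of $\xi$ before $\eta_{-N}$ does not change the admissible domain around $y$ (here Lemma~\ref{l.leftint} plays a role) while the tail of $\xi$ after $\zeta_k$ preserves the property that the forward orbit has been sent to $q^\ast$. Once this bookkeeping is carried out, the IFS results of Section~\ref{s.onedim} — chiefly Proposition~\ref{l.expiti} and Lemma~\ref{l.newfixedexpandingpoint} — directly yield the required density of transverse homoclinic intersections of $Q^\ast$ in $\Lambda_F$.
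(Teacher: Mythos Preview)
Your proposal has a genuine gap, and it is precisely the ``main obstacle'' you yourself flag at the end: the coordination between the backward and forward constraints on the fiber coordinate $y$ is not mere bookkeeping, it can fail outright. Concretely, take $X=(\theta,x)$ with $\theta=\varpi^{-1}(0^\ZZ)$ and $x\in(0,1)$ small; then $X\in\Lambda_F$ since $I_{[0^\ZZ]}=[0,1]$. Matching the itinerary of $\widehat x$ on a long window forces $\eta_{-N}\ldots\eta_{-1}=0^N$, so the admissible domain for your backward tail is
\[
I_{[\ldots\xi\xi\,0^N.]}=f_0^N\big(I_{[\xi^\ZZ]}\big),
\]
which converges to $\{1\}$ as $N\to\infty$ because $q^\ast\in I_{[\xi^\ZZ]}\subset(0,1)$ and $1$ is the attracting fixed point of $f_0$. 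Hence for large $N$ this admissible domain is disjoint from any small interval $H$ around $x$, and no point $y\in H$ can satisfy your backward condition. Lemma~\ref{l.leftint}, which you invoke, concerns appending blocks of $0$'s to the \emph{left} of a finite word and is irrelevant to appending periodic $\xi$-blocks; it does not rescue the argument. A second, smaller gap: the sweeping property (Proposition~\ref{l.expiti}) only yields that the image covers $[f_0^{-2}(b_0),f_0^{-1}(b_0)]$, which lies in $W^u(q^\ast,f_{[\xi]})$ but need not contain the expanding point $q^\ast\in I_0=[f_0^{-1}(b_0),b_0]$ itself, so your claim that some forward iterate ``lands exactly on $Q^\ast$'' is not justified as stated. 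Finally, you do not handle the boundary case where $x_{-i}\in\{0,1\}$ for all large $i\ge 0$ (for instance $X=P$ or $X=Q$).

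The paper's proof avoids all of this by decoupling the two directions via the strong stable/unstable foliations rather than by symbolic surgery on a single bi-infinite itinerary. Lemma~\ref{l.wubq} shows that $W^u(Q^\ast,F)$ transversely meets every $s$-disk $[0,1]^s\times\{(x^u,x)\}$ with $x\in(0,1)$; then, given $X\in\Lambda_F$ with $x_{-i}\in(0,1)$ infinitely often, one iterates \emph{backward} to stretch the small $s$-disk $\Delta^s_\delta(X)$ until it becomes a full $s$-leaf, obtaining a nearby point $X(\delta)\in W^u(Q^\ast,F)$ (Proposition~\ref{p.delta+deltaepsilon}). Only then does one take a small $cu$-disk at $X(\delta)$ --- which already lies in $W^u(Q^\ast,F)$, so there is no backward constraint left to satisfy --- and iterate \emph{forward}, using the expanding itineraries of Section~\ref{ss.expanding} to produce a transverse intersection with $W^s(Q^\ast,F)$. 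The residual boundary case is then reduced to the generic one by an approximation argument (Case~2 in the proof). The key conceptual difference is that the paper never tries to impose forward and backward fiber constraints on the same point simultaneously; the $s$-direction freedom absorbs the backward requirement.
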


Note that this result implies in particular that this set is transitive and contains both saddles of index $u+1$ and of index $u$ and thus is not hyperbolic. To prove Theorem~\ref{th.homoclinicbarQ}, we will use the properties of the iterated function system in Section~\ref{s.onedim}. This translation is possible by the skew structure of $F$. In fact, the following remark follows immediately from this structure.

\begin{rema}\label{r.fixe}
    {\rm Given a periodic sequence $\xi=(\xi_0\ldots \xi_{m-1})^\ZZ\in\Sigma_2$ and a fixed point $ r\in [0,1]$
    of the map $f_{[\xi_0\ldots \xi_{m-1}]}$, there is a canonically associated saddle point $R=(r^s,r^u,r)$ where $(r^s,r^u)=\varpi^{-1}((\xi_0\ldots\xi_{m-1})^\ZZ)$.
    If $\lvert f_{[\xi_0\ldots \xi_{m-1}]}'(r)\rvert \ne 1$ then the saddle
$R$ is hyperbolic
 and
    \[\begin{split}
    \{r^s\}\times [0,1]^u\times W^u(r,f_{[\xi_0\ldots \xi_{m-1}]}) &\subset W^u(R,F),\\
    [0,1]^s\times \{r^u\} \times W^s(r,f_{[\xi_0\ldots \xi_{m-1}]}) &\subset W^s(R,F).
    \end{split}\]
Note that if $\lvert f_{[\xi]}'(r)\rvert > 1$ (respectively, $\lvert f_{[\xi]}'(r)\rvert < 1$) then the saddle $R$ has index $u+1$ (respectively, $u$).

We introduce some  notation. Given a periodic sequence
$(\xi_0\ldots\xi_{m-1})^\ZZ\in\Sigma_2$ and a fixed point
$r_{(\xi_0\ldots\xi_{m-1})^\ZZ} =
f_{[\xi_0\ldots\xi_{m-1}]}(r_{(\xi_0\ldots\xi_{m-1})^\ZZ})$, we
will consider the point
\[
    R_{(\xi_0\ldots\xi_{m-1})^\ZZ}\eqdef
    \big( \varpi^{-1}((\xi_0\ldots\xi_{m-1})^\ZZ),r_{(\xi_0\ldots\xi_{m-1})^\ZZ} \big)
\]
that is periodic under $F$. Notice that there can exist fibers that contain more
than one periodic point, that is, in general the points
$r_{(\xi_0\ldots\xi_{m-1})^\ZZ}$ and hence
$R_{(\xi_0\ldots\xi_{m-1})^\ZZ}$ are not unique.}
\end{rema}

\begin{rema}\label{r.suusintersect}{\rm
	Note that Remark~\ref{r.fixe} implies, in particular, that for every periodic point $R\in \La_F\setminus\{Q,P\}$ one has that $W^u(R,F)$ intersects transversely $[0,1]^s\times \{0^u\} \times (0,1)\subset W^s(P,F)$ and that $W^s(R,F)$ intersects transversely $\{0^s\}\times [0,1]^u\times (0,1)\subset W^u(Q,F)$.
}\end{rema}

\begin{rema}\label{r.RsuP}{\rm
	Given a periodic point $R=R_{(\xi_0\ldots\xi_{m-1})^\ZZ}=(r^s,r^u,r)$ such that $W^s(r,f_{[\xi_0\ldots\xi_{m-1}]})$ contains the forward orbit of either $0$ or $1$, then $W^u(P,F)$ intersects $W^s(R,F)$ transversely.
}\end{rema}

We have the following relation for periodic points with index $u$.

\begin{lemm}\label{l.homrelnegspec}
Consider a periodic sequence $\xi=(\xi_0\ldots\xi_{m-1})^\ZZ\in
\Si_2$ with $\xi\ne 0^\ZZ$ and an associated periodic point of the map $F$
$$
R=R_{(\xi_0\ldots\xi_{m-1})^\ZZ}=(r^s,r^u,r_{(\xi_0\ldots\xi_{m-1})^\ZZ})=(r^s,r^u,r).
$$
\begin{enumerate}
\item [1)]  If $R$ has index $u$ and if the stable manifold $W^s(r,f_{[\xi_0\ldots\xi_{m-1}]})$ contains $[0,1]$ then $R$ is homoclinically related to $P$.
\item [2)]
If $R$ has index $u+1$ and if the unstable manifold $W^u(r,f_{[\xi_0\ldots\xi_{m-1}]})$ contains a
fundamental domain of $f_0$ in  $(0,1)$ then for every saddle $\widetilde R\in \La_F$ with $\widetilde R\ne Q$ the manifolds $W^s(\widetilde R,F)$ and $W^u(R,F)$ intersect transversely.
\end{enumerate}
\end{lemm}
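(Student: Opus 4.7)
My plan is to treat Parts 1) and 2) separately, in both cases reducing to explicit transversality checks via Remark~\ref{r.fixe} applied to the skew-product structure of $F$.

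For Part 1), since $R$ has index $u$ the fixed point $r$ of $f_{[\xi_0\ldots\xi_{m-1}]}$ is attracting, so $W^u(r,f_{[\xi_0\ldots\xi_{m-1}]})=\{r\}$, while by hypothesis $W^s(r,f_{[\xi_0\ldots\xi_{m-1}]})\supset[0,1]$. Remark~\ref{r.fixe} then yields
\[
\{r^s\}\times[0,1]^u\times\{r\}\subset W^u(R,F),\qquad
[0,1]^s\times\{r^u\}\times[0,1]\subset W^s(R,F).
\]
I would check the two required intersections against the explicit local manifolds of $P$ from \eqref{e.manifolds}: $W^s(R,F)\cap W^u(P,F)\ni(0^s,r^u,1)$ and $W^u(R,F)\cap W^s(P,F)\ni(r^s,0^u,r)$. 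The only subtlety is that the second point requires $r>0$; this follows from $\xi\ne 0^\ZZ$, since any occurrence of $f_1$ in $f_{[\xi_0\ldots\xi_{m-1}]}$ together with $f_1(0)=1$ precludes $r=0$. In each case the tangent spaces of the two intersecting manifolds are complementary and span $\RR^{s+u+1}$, giving transversality.

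For Part 2), Remark~\ref{r.fixe} combined with the hypothesis gives $\{r^s\}\times[0,1]^u\times D\subset W^u(R,F)$, where $D\subset(0,1)$ is the fundamental domain of $f_0$ inside $W^u(r,f_{[\xi_0\ldots\xi_{m-1}]})$. The subcase $\widetilde R=P$ is immediate: this $(u+1)$-dimensional piece meets $[0,1]^s\times\{0^u\}\times(0,1]\subset W^s(P,F)$ transversely along the arc $\{r^s\}\times\{0^u\}\times D$. For the substantive subcase $\widetilde R\ne P,Q$, I would couple the sweeping property of Proposition~\ref{l.expiti} in the center with the Markov structure of $\Phi$ in the base. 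By Remark~\ref{r.suusintersect}, $W^s(\widetilde R,F)$ transversely meets $\{0^s\}\times[0,1]^u\times(0,1)\subset W^u(Q,F)$ at some point $Y_0=(0^s,y_0^u,y_0^c)$. Proposition~\ref{l.expiti}, together with additional iterates of $f_0$ and $f_1$, produces a finite sequence $\eta$ such that $f_{[\eta]}(D)$ is a closed interval containing $y_0^c$ in its interior; on the sub-cylinder of $\{r^s\}\times[0,1]^u\times D$ whose first $|\eta|$ base coordinates follow $\eta$, the forward $F$-iterate sits inside $W^u(R,F)$ and has central component crossing $y_0^c$. Extending $\eta$ on the left by a word chosen so that, after iteration by $\Phi$, the base component of the iterated disk approaches the unstable manifold $W^u(\theta,\Phi)=\{0^s\}\times[0,1]^u$ near $(0^s,y_0^u)$ via the inclination lemma at $\theta$, produces a $(u+1)$-disk inside $W^u(R,F)$ that is $C^1$-close to a leaf of $W^u(Q,F)$ through $Y_0$. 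Persistence of the transverse crossing $W^s(\widetilde R,F)\pitchfork W^u(Q,F)$ at $Y_0$ then forces the required transverse intersection $W^s(\widetilde R,F)\pitchfork W^u(R,F)$.

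The main technical obstacle is the simultaneous realization of the correct central and base positions along a single itinerary. This is the standard shadowing device used in the heterodimensional-cycle constructions cited in the paper (compare~\cite{Dia:95,BonDia:08}): the central block is chosen first, using Proposition~\ref{l.expiti}, to secure $y_0^c\in f_{[\eta]}(D)$, and the itinerary is then extended on the left along a horseshoe orbit whose $\Phi$-iterates shadow $W^u(\theta,\Phi)$ near $(0^s,y_0^u)$. Since the transverse intersection $W^s(\widetilde R,F)\pitchfork W^u(Q,F)$ at $Y_0$ is an open condition, it survives the $C^1$ proximity of the iterated disk to a leaf of $W^u(Q,F)$, completing the proof.
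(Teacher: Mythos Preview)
Your Part 1) is correct and essentially matches the paper's argument; the only difference is that you intersect $W^s(R,F)$ with the local piece $\{0^s\}\times[0,1]^u\times\{1\}$ of $W^u(P,F)$, whereas the paper uses the iterate $\{y^s\}\times[0,1]^u\times\{f_1(0)\}\subset W^u(P,F)$ from~\eqref{e.ys}. Both are fine.

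Your Part 2) takes a much heavier route than necessary, and the ``extending $\eta$ on the left'' step is both superfluous and, as written, unclear. Once you have the disk $\{r_\eta^s\}\times[0,1]^u\times f_{[\eta]}(D)\subset W^u(R,F)$ with $y_0^c\in f_{[\eta]}(D)$, you are already done: by Remark~\ref{r.ssleaf} the full strong stable leaf $[0,1]^s\times\{y_0^u\}\times\{y_0^c\}$ through $Y_0$ lies in $W^s(\widetilde R,F)$, and this $s$-disk meets your $(u+1)$-disk transversely at $(r_\eta^s,y_0^u,y_0^c)$ regardless of the value of $r_\eta^s$. No inclination lemma and no simultaneous base/centre control are needed. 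If you instead try to force $r_\eta^s$ close to $0^s$ by modifying the itinerary, you face the difficulty you allude to: appending $0$'s at the end pushes the central interval toward $1$ and destroys the condition $y_0^c\in f_{[\eta]}(D)$, while prepending a word changes $f_{[\eta]}(D)$ altogether. Your final paragraph does not resolve this, it just restates it.

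The paper's own proof of Part 2) is considerably simpler and avoids Proposition~\ref{l.expiti} entirely. It only observes that forward $f_0$-iterates of the fundamental domain $D=[d,f_0(d)]$ sweep out $[d,1)$, and that (via Remark~\ref{r.suusintersect} and one application of $f_1$) the manifold $W^s(\widetilde R,F)$ contains $s$-disks $[0,1]^s\times\{(z^u,x)\}$ with $x$ arbitrarily close to $1$; a single backward $f_0$-iterate of such a disk then meets $\{z^s\}\times[0,1]^u\times D$ transversely. Your approach, once trimmed of the unnecessary inclination-lemma detour, is correct but trades this one-line observation for the full sweeping machinery.
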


\begin{proof}
Note that as $\xi\ne 0^\ZZ$, we have that $r\in (0,1)$ and  thus $R\ne Q$, $P$.

Suppose that $R$ has index $u$. Remark~\ref{r.suusintersect} implies that $W^u(R,F)$ intersects transversely $W^s(P,F)$. To see that $W^u(P,F)$ intersects transversely $W^s(R,F)$ (and thus the $R$ and $P$ are homoclinically related) it suffices to note that $\{y^s\}\times [0,1]^u \times \{f_1(0)\}\subset W^u(P,F)$, for some $y^s\in (0,1)^s$, and that $[0,1]\subset W^s(r, f_{[\xi_0\ldots,\xi_{m-1}]})$. Remark~\ref{r.fixe} then implies the assertion in 1).

To prove item 2) note that our assumptions imply that for some fundamental domain $D=[d,f_0(d)]\subset (0,1)$ of $f_0$ and some point $z^s\in (0,1)^s$ we have that
$$
	\De\eqdef \{z^s\} \times [0,1]^u \times D \subset W^u(R,F).
$$
Remark~\ref{r.suusintersect} applied to $\widetilde R$ implies that $W^s(\widetilde R,F)$ accumulates at $W^s(Q,F)$ from the right and therefore (from the definition of $f_1$) $W^s(\widetilde R,F)$ accumulates to  $\{0^s\}\times [0,1]^u\times \{1\}$ from the left. In particular, for every $\de>0$ there are  $x\in (1-\de,1)\subset (f_0(d),1)$ and $z^u\in (0,1)$ such that
\begin{equation}\label{e.oldremark49}
	\Upsilon \eqdef [0,1]^s\times \{(z^u,x)\} \subset W^s(\widetilde R,F).
\end{equation}
The choice of $x$ implies that some negative iterate of $\Upsilon$ by $F$ transversely meets $\De\subset W^u(R,F)$. Thus $W^s(\widetilde R,F)$ transversely intersects $W^u(R,F)$, ending the proof
of the lemma.
\end{proof}

\begin{rema} \label{r.newrema49}{\rm
	Note that equation~\eqref{e.oldremark49} and the fact that
$x$ can be taken arbitrarily close to $1$ implies that for every
saddle $R\in \La_F$, $R\ne Q$, every fundamental domain $D$ of
$f_0$ in $(0,1)$, and  every $x^s\in [0,1]$ we have that
 $\{x^s\} \times [0,1]^u\times D \pitchfork W^s(R,F)\ne \emptyset$.
 }\end{rema}

We continue exploring the skew-product structure and the strong un-/stable
directions  of the global transformation $F$.

\begin{rema}\label{r.iterates}
    {\rm
    Consider an interval $I\subset[0,1]$, a point  $y^s\in [0,1]^s$, and the disk $\De=\{y^s\} \times [0,1]^u \times I$.
    Given a finite sequence $\xi=(\xi_0\ldots\xi_{m})$ with Notation~\ref{n.cylinders}   there is some  $\overline y^s\in
    [0,1]^s$ such that
    $$
    F^{m+1} (\De_{[\xi]})= \{\overline y^s\} \times [0,1]^u\times f_{[\xi]}
    (I).
    $$
}\end{rema}

\begin{lemm}\label{l.wubq}
	Given the periodic point $ Q^\ast=(q^s,q^u,q^\ast)=(\widehat q, q^\ast)$ in Theorem~\ref{th.homoclinicbarQ}, the unstable manifold $W^\ut ( Q^\ast,F)$  intersects transversely the $s$-disk $[0,1]^s\times \{(x^u,x)\}$ for any
$(x^u,x)\in [0,1]^u\times (0,1)$.
\end{lemm}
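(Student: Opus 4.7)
My plan is to exhibit an explicit $(u+1)$-dimensional disk inside $W^u(Q^\ast,F)$ meeting the prescribed $s$-disk in a single point, and then read off transversality from the partially hyperbolic splitting $E^\sst\oplus E^c\oplus E^\uut$. The starting disk comes from Lemma~\ref{l.newfixedexpandingpoint} applied to $J=I_0$: the unstable manifold $W^u(q^\ast,f_{[\xi(I_0)]})$ contains the fundamental domain $D_0\eqdef[f_0^{-2}(b_0),f_0^{-1}(b_0)]$, so by Remark~\ref{r.fixe} the product $\Delta\eqdef\{q^s\}\times[0,1]^u\times D_0$ lies inside $W^u(Q^\ast,F)$. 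Remark~\ref{r.iterates} then tells me that for every finite sequence $\eta=(\eta_0\ldots\eta_m)$ there is $\overline y^s\in[0,1]^s$ with
\[
F^{m+1}(\Delta_{[\eta]})=\{\overline y^s\}\times[0,1]^u\times f_{[\eta]}(D_0)\subset W^u(Q^\ast,F),
\]
so the task reduces to showing that, for every $x\in(0,1)$, some composition $f_{[\eta]}$ of maps from $\{f_0,f_1\}$ sends $D_0$ to an interval containing~$x$.

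This covering statement $\bigcup_\eta f_{[\eta]}(D_0)=(0,1)$ is the only non-routine point, and is the step I expect will be the main obstacle; I would establish it in three stages. First, since $D_0=f_0^{-1}(I_0)$ and $f_0$ has $0$ repelling and $1$ attracting with no other fixed points in $(0,1)$, the iterates $f_0^k(D_0)$ are consecutive fundamental domains of $f_0$ whose union is $[f_0^{-2}(b_0),1)$. Next, since $f_1$ is a decreasing contraction with $f_1(1)=0$, applying $f_1$ yields the interval $V_1\eqdef\bigcup_{k\ge 0}f_{[0^k 1]}(D_0)=(0,f_1(f_0^{-2}(b_0))]$, which accumulates at~$0$. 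Finally, since $f_0^n(y)\to 1$ for every $y\in(0,1)$, the nested family $f_0^n(V_1)=(0,f_0^n(f_1(f_0^{-2}(b_0)))]$ exhausts $(0,1)$ as $n\to\infty$, which rewrites as $\bigcup_{k,n\ge 0}f_{[0^k 1 0^n]}(D_0)=(0,1)$, giving the claim.

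To finish, for the given $(x^u,x)\in[0,1]^u\times(0,1)$ I pick $\eta$ with $x\in f_{[\eta]}(D_0)$; then $(\overline y^s,x^u,x)$ lies both in $F^{m+1}(\Delta_{[\eta]})\subset W^u(Q^\ast,F)$ and in the $s$-disk $[0,1]^s\times\{(x^u,x)\}$. The respective tangent spaces $\{0\}^s\times\RR^u\times\RR=E^\uut\oplus E^c$ and $\RR^s\times\{0\}^{u+1}=E^\sst$ are complementary by the dominated splitting, so the intersection is transverse.
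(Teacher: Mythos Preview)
Your proof is correct and follows essentially the same approach as the paper: start from the disk $\{q^s\}\times[0,1]^u\times D_0\subset W^u(Q^\ast,F)$ obtained via Lemma~\ref{l.newfixedexpandingpoint} and Remark~\ref{r.fixe}, then iterate forward to show that compositions $f_{[\eta]}(D_0)$ cover all of $(0,1)$. The only minor difference is in the covering step for small $x$: the paper covers $(0,f_0^{-2}(b_0))$ directly by one application of $f_1$ to the already-covered region $[f_0^{-2}(b_0),1)$, invoking condition (F01)(iii), whereas you use itineraries $0^k10^n$ and a final sweep by $f_0^n$; your route is slightly longer but avoids appealing to (F01)(iii) and makes the transversality verification explicit.
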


\begin{proof}
Consider the finite sequence $\xi=\xi(I_0)$ associated to $ q^\ast$ as provided by Lemma~\ref{l.newfixedexpandingpoint}. Recall that by
Lemma~\ref{l.newfixedexpandingpoint} the fundamental domain
$D=[f_0^{-2}(b_0),f_0^{-1}(b_0)]$ is contained in $W^u(q^\ast, f_{[\xi]})$. This implies that
$$
    \De_0\eqdef \{q^s\} \times [0,1]^u \times D
     \subset W^\ut (Q^\ast,F).
$$
Let us consider the following forward iterations  of  $\De_0$ by $F$. For $i\ge 0$ define recursively
\[
   \De_{i+1}
        \eqdef F^{i+1} (\De_0\cap \CC_{[0^{i+1}]})=
        F(\De_i\cap \CC_0)\\
        = \{q^s_{i+1}\}\times [0,1]^u \times f^{i+1}_0(D),
\]
for some point $q^s_{i+1}\in[0,1]^s$. Observe that
\[
	\bigcup_{i\ge 0}f_0^{i}(D)\supset \big[f_0^{-2}(b_0),1\big).
\]
Thus for every $x\in [f_0^{-2}(b_0),1)$ there is some point $y^s(x)\in [0,1]^s$ such that
$$
	\Ups_x
	= \{y^s(x)\} \times [0,1]^u \times \{x\} \subset W^\ut (Q^\ast,F).
$$
This implies that the lemma holds when $x\in [f_0^{-2}(b_0),1)$.

To complete the proof of the lemma, first observe that, by (iii) in {(F01}) for any point $x\in(0,f_0^{-2}(b_0))$ one has
\[
	x'=f_1^{-1}(x) \in \big[f_0^{-2}(b_0),1\big) .
\]	
Thus, we can consider the disk $\Ups_{x'}$ and the point $z^s(x')\in [0,1]^s$ given by
$$
    F\big(\Ups_{x'}\cap \CC_1\big)
    = \{z^s(x')\} \times [0,1]^u \times\{f_1(x')\}
    =\{z^s(x')\} \times [0,1]^u \times\{x\}.
$$
By construction, the $u$-disk  $F(\Ups_{x'}\cap \CC_1)$ is contained in $W^u( Q^\ast,F)$ and intersects the $s$-disk $[0,1]^s\times \{(x^u,x)\}$. This ends the proof of the lemma.
\end{proof}

\begin{rema}\label{r.ssleaf}
    {\rm
    Given $X=(x^s,x^u,x)\in \Lambda_F$, we denote by $W^\sst(X,F)$ (by $W^\uut(X,F)$) the strong stable manifold of $X$ (the strong unstable manifold of $X$) defined as the unique invariant manifold tangent to $E^s$ (to $E^u$) at $X$ and of dimension $s$ (dimension $u$).
Note that  we have
    $$
        [0,1]^s\times \{(x^u,x)\} \subset W^{\sst}(X,F), \qquad
\{0^s\}\times [0,1]^u \times  \{x\} \subset W^{\uut}(X,F).
    $$
    }
\end{rema}

Remark~\ref{r.ssleaf} and Lemma~\ref{l.wubq} immediately imply the
following.

\begin{coro}\label{c.wubq}
	For every $X=(x^s,x^u,x)\in \La_F$ with $x\in(0,1)$ we have $W^\ut(Q^\ast,F)\pitchfork W^{\sst}(X,F)\ne \emptyset$. In particular, we have $W^u( Q^\ast,F)\pitchfork W^s(R,F)$ for every saddle $R\in \La_F\setminus \{P,Q\}$.
\end{coro}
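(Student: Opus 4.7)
The plan is to deduce both assertions directly from the two preceding statements (Remark~\ref{r.ssleaf} and Lemma~\ref{l.wubq}); the proof is essentially a chain of inclusions and no new dynamical input is needed.

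For the first assertion, fix $X=(x^s,x^u,x)\in\La_F$ with $x\in(0,1)$. By Remark~\ref{r.ssleaf}, the strong stable leaf at $X$ contains the full $s$-disk
\[
	[0,1]^s\times\{(x^u,x)\} \subset W^{\sst}(X,F).
\]
Since $x\in(0,1)$, Lemma~\ref{l.wubq} applies to the pair $(x^u,x)\in[0,1]^u\times(0,1)$ and yields a point of transverse intersection between $W^{\ut}( Q^\ast,F)$ and this $s$-disk. Because the $s$-disk is contained in $W^{\sst}(X,F)$, the same intersection point is transverse inside the ambient cube. The dimension count ($\dim W^u( Q^\ast,F)=u$ and $\dim W^{\sst}(X,F)=s$, totalling the codimension $s+u$ of $W^{\sst}(X,F)$ in $\CC$ complemented by $W^u( Q^\ast,F)$) confirms transversality, giving $W^{\ut}( Q^\ast,F)\pitchfork W^{\sst}(X,F)\ne\emptyset$.

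For the ``in particular'' assertion, take any saddle $R=(r^s,r^u,r)\in\La_F\setminus\{P,Q\}$. I would first argue that necessarily $r\in(0,1)$: the dynamics on the boundary fibers $\{0\}$ and $\{1\}$ of the skew-product is governed only by iterates of $f_0$ restricted to $\{0,1\}$ (on which $f_1$ acts by swapping the two points), so any periodic point in $\La_F$ lying on fiber $r\in\{0,1\}$ must project to $\theta=\varpi^{-1}(0^\ZZ)$, giving $R\in\{P,Q\}$, contradicting our hypothesis. Once $r\in(0,1)$, I apply the first assertion with $X=R$, and use the inclusion $W^{\sst}(R,F)\subset W^s(R,F)$ (which holds for any hyperbolic point) to conclude $W^u( Q^\ast,F)\pitchfork W^s(R,F)\ne\emptyset$.

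The only mild subtlety is justifying that saddles in $\La_F\setminus\{P,Q\}$ have central coordinate strictly inside $(0,1)$. This is where I would need to invoke the skew-product structure together with the fact that the orbit of $0$ (resp.\ of $1$) under any composition of $f_0$ and $f_1$ stays in $\{0,1\}$, so a saddle on these boundary fibers projects to a periodic orbit of the horseshoe whose central coordinate is constantly $0$ or $1$; inspection of~\eqref{e.defPQ} and the structure of $F$ shows the only such points are $P$ and $Q$. The rest of the proof is a formal combination of the earlier results and so carries no further obstacle.
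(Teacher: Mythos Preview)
Your overall approach matches the paper's, which simply records that Remark~\ref{r.ssleaf} and Lemma~\ref{l.wubq} ``immediately imply'' the corollary; your first paragraph is exactly that deduction.

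Two small slips to fix. First, the dimension count: $Q^\ast$ has index $u+1$, so $\dim W^u(Q^\ast,F)=u+1$, not $u$; together with $\dim W^{\sst}(X,F)=s$ this gives $s+u+1$, the ambient dimension, which is what transversality requires. Second, your justification that $r\in(0,1)$ contains a false claim: $f_1$ does \emph{not} swap $0$ and $1$ (we have $f_1(1)=0$ but $f_1(0)\in(0,1)$ by (F01)), so forward orbits of $0$ need not stay in $\{0,1\}$. The correct argument runs the other way: since $f_0$ and $f_1$ both map $(0,1)$ into $(0,1)$, a periodic orbit meeting $\{0,1\}$ must remain in $\{0,1\}$ for all time; but from $0$ the only map keeping you in $\{0,1\}$ is $f_0$, and from $1$ applying $f_1$ sends you to $0$ with no way back to $1$, so the only periodic orbits in $\{0,1\}$ have base sequence $0^\ZZ$, giving $R\in\{P,Q\}$. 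Alternatively one can invoke Lemma~\ref{l.in01}, which shows $I_{[\xi]}\subset(0,1)$ for any periodic $\xi\ne 0^\ZZ$.
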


\begin{nota}\label{n.Lorenzo}
{\rm
    For a point $X\in \CC$ and a number $i\in\ZZ$ such that $F^i(X)\in\CC$ let us write
    \[
        X_i=F^i(X)=(x^s_i,x^u_i, x_i).
    \]
}\end{nota}

Given $x^s\in\RR^s$ we denote
$B^s_\delta(x^s)\eqdef\{x\in\RR^s\colon d(x,x^s)<\delta\}$. We
will also use the analogous notation $B^u_\delta(x^u)$.

The following proposition is the main step in the proof of Theorem~\ref{th.homoclinicbarQ}.

\begin{prop}\label{p.delta+deltaepsilon}
    Consider a point $X=(x^s,x^u,x)\in \La_F$ such that $x_i\in (0,1)$ for infinitely many $i\le 0$. Given $\delta>0$, the disk
    \[
    \De^s_\de(X)\eqdef B^s_\delta(x^s)\times \{(x^u,x)\}
    \]
    transversely intersects $W^\ut(Q^\ast,F)$.
	Given a point
	\[
		X(\delta)\eqdef (x^s(\de),x^u,x)
		\in\De^s_\de(X)\pitchfork W^\ut(Q^\ast,F),
	\]	
	then for every $\varepsilon>0$ the disk
    $$
    \De^\cut_\varepsilon (X(\de))
    \eqdef \{x^s(\de)\}\times B^u_\varepsilon(x^u) \times
    [x-\varepsilon,x+\varepsilon]\subset W^u( Q^\ast,F)
    $$
  	intersects $W^\st(Q^\ast,F)$ transversely.
\end{prop}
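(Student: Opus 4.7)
The plan is to establish the two assertions in turn. Both rely on Corollary~\ref{c.wubq} combined with the strong contraction on $E^\sst$, the strong expansion on $E^\uut$ from the horseshoe $\Phi$, and the sweeping property of Proposition~\ref{l.expiti}. A preliminary observation is that the hypothesis forces $x\in(0,1)$: since $f_0$ preserves $(0,1)$ (its only fixed points are $0$ and $1$ and it is increasing) and $f_1$ sends $[0,1)$ into $(0,1)$ ($f_1$ being a decreasing contraction with $f_1(1)=0$ and $f_1(0)\in(0,1)$), once $x_{-n}\in(0,1)$ all subsequent forward iterates lie in $(0,1)$, in particular $x=x_0\in(0,1)$.

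For the first assertion, fix $n\ge 1$ as large as desired with $x_{-n}\in(0,1)$. Corollary~\ref{c.wubq} applied to $X_{-n}$ provides a transverse intersection point $Y_n\in W^\ut(Q^\ast,F)\pitchfork W^\sst(X_{-n},F)$. By $F$-invariance of $W^\ut(Q^\ast,F)$ and of the strong stable foliation, $F^n(Y_n)\in W^\ut(Q^\ast,F)\pitchfork W^\sst(X,F)$. The uniform contraction of $dF|_{E^\sst}$, together with the bounded diameter of strong stable leaves inside $\CC$ (Remark~\ref{r.ssleaf}), gives $d(F^n(Y_n),X)\le C\mu^n$ for constants $C>0$, $\mu\in(0,1)$ independent of $n$. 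Taking $n$ large enough yields $F^n(Y_n)\in B^\st_\delta(x^\st)\times\{(x^\ut,x)\}=\De^\st_\delta(X)$, proving the first part.

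For the second assertion, after shrinking $\varepsilon$ we may assume $[x-\varepsilon,x+\varepsilon]\subset(0,1)$. First iterate $F^k$ for $k$ large enough that $\Phi^k$ expands $B^\ut_\varepsilon(x^\ut)$ (restricted to the time-$k$ cylinder determined by the itinerary of $X(\delta)$) onto the whole unstable cube $[0,1]^u$. By Remark~\ref{r.iterates}, the image $F^k(\De^\cut_\varepsilon(X(\delta)))$ then contains a $cu$-band $\{y^\st\}\times[0,1]^u\times I$ with $I=f_{[\xi_0\ldots\xi_{k-1}]}([x-\varepsilon,x+\varepsilon])\subset(0,1)$, where $\xi$ denotes that itinerary. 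Apply Proposition~\ref{l.expiti} to $I$ to obtain a finite sequence $(\eta_0\ldots\eta_m)$ with $f_{[\eta_0\ldots\eta_m]}(I)\supset[f_0^{-2}(b_0),f_0^{-1}(b_0)]$, restrict to the cylinder $\CC_{[\eta_0\ldots\eta_m]}$ inside the band, and apply $F^{m+1}$ once more. Remark~\ref{r.iterates} then yields a $cu$-band $\{\tilde y^\st\}\times[0,1]^u\times J$ with $J\supset[f_0^{-2}(b_0),f_0^{-1}(b_0)]$, still contained in $W^\ut(Q^\ast,F)$.

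Now iterate along the periodic itinerary $\xi(I_0)$ of $Q^\ast$: restrict to the sub-cylinder of length $Nm_\ast$ given by $N$ repetitions of $\xi(I_0)$ (with $m_\ast$ the period of $Q^\ast$) and apply $F^{Nm_\ast}$. By Lemma~\ref{l.newfixedexpandingpoint}, $f_{[\xi(I_0)]}$ is uniformly expanding on $J$ with fixed point $q^\ast$, so $(f_{[\xi(I_0)]})^N(J)$ contains $q^\ast$ in its interior once $N$ is large; simultaneously, uniform hyperbolicity of $\Phi$ on $\Ga$ forces the $\st$-coordinate $\hat y^\st$ of the endpoint (depending only on the past itinerary, which now ends in $\xi(I_0)^N$) to converge exponentially to $q^\st$. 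For $N$ large the resulting $cu$-band $\{\hat y^\st\}\times[0,1]^u\times J_N$ therefore contains the point $(\hat y^\st,q^\ut,q^\ast)$, which lies in $W^\st_\loc(Q^\ast,F)$ because $\hat y^\st$ falls inside its local $\st$-radius. Transversality is immediate from the splitting $E^\sst\oplus E^c\oplus E^\uut$, since the $cu$-band is tangent to $E^c\oplus E^\uut$ while $W^\st_\loc(Q^\ast,F)$ is tangent to $E^\sst$; pulling back by the composite forward iterate produces the desired transverse intersection in $\De^\cut_\varepsilon(X(\delta))\cap W^\st(Q^\ast,F)$. The main technical obstacle is precisely this second part: one must simultaneously control horseshoe expansion in $E^\uut$, sweeping of the central interval via a freely chosen cylinder itinerary, and convergence of the $\st$-coordinate to $q^\st$, while verifying at each step that the $cu$-band structure is preserved by Remark~\ref{r.iterates} and that all iterates remain inside the Markov cube $\CC$ so the skew-product formulas apply.
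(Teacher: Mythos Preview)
Your argument for the first assertion is correct and essentially coincides with the paper's: you apply Corollary~\ref{c.wubq} at a backward iterate and push forward using the strong-stable contraction, whereas the paper phrases this as backward-iterating $\De^s_\de(X)$ until it covers the full leaf $[0,1]^s\times\{(x^u_i,x_i)\}$ and then invoking Lemma~\ref{l.wubq}. These are the same idea.

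For the second assertion your outline matches the paper through the point where you obtain a $cu$-band $\{\tilde y^s\}\times[0,1]^u\times J$ with $J\supset[f_0^{-2}(b_0),f_0^{-1}(b_0)]$; the paper reaches the same band via Lemma~\ref{l.successor} rather than the packaged Proposition~\ref{l.expiti}, but that is cosmetic. The divergence, and the gap, is in your final step. You assert that $(f_{[\xi(I_0)]})^N(J)$ contains $q^\ast$ for $N$ large because $f_{[\xi(I_0)]}$ is expanding with fixed point $q^\ast$. But Lemma~\ref{l.newfixedexpandingpoint} places $q^\ast$ in $I_0=[f_0^{-1}(b_0),b_0]$, the interval \emph{adjacent} to the fundamental domain $[f_0^{-2}(b_0),f_0^{-1}(b_0)]$ that $J$ is guaranteed to contain; there is no reason for $q^\ast\in J$. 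For an expanding injective map, forward iterates of an interval \emph{not} containing the fixed point move away from it, not toward it, so your implication runs in the wrong direction. The fix is trivial: apply $f_0$ once more, so that the central factor becomes $f_0(J)\supset I_0\ni q^\ast$. After that no further iteration along $\xi(I_0)$ is even needed, since by Remark~\ref{r.fixe} one has $[0,1]^s\times\{(q^u,q^\ast)\}\subset W^\st(Q^\ast,F)$, and this meets the band $\{\cdot\}\times[0,1]^u\times f_0(J)$ transversely at $(\cdot,q^u,q^\ast)$ regardless of the $s$-coordinate; the convergence $\hat y^s\to q^s$ you arrange is superfluous. The paper shortcuts this whole step via Remark~\ref{r.newrema49}, which records directly that any $cu$-band over a fundamental domain of $f_0$ meets $W^\st(R,F)$ transversely for every saddle $R\ne Q$.
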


\begin{proof}[Proof of Proposition~\ref{p.delta+deltaepsilon}]
 Since $x_i\in (0,1)$ for infinitely many $i\le0$, the uniform expansion in the $s$-direction with respect to $F^{-1}$ implies that there is some iterate $i\le0$ such that $x_i\in(0,1)$ and
$$
	[0,1]^s\times \{(x^u_i,x_i)\}\subset F^i \big(\De^s_\de(X)\big).
$$
Thus,  by Lemma~\ref{l.wubq}, $F^i\big(\De^s_\de(X)\big)$ intersects $W^\ut (Q^\ast,F)$ transversely and hence $\De^s_\de(X)$ intersects $W^\ut(Q^\ast,F)$ transversely.

Note that, since $X\in \La_F$, the definition of $X(\delta)$ implies that $X(\de) \in \La_F^+$. Consider now the forward orbit of $X(\de)$ and let $\xi=(\xi_0\xi_1\ldots)\in\Sigma_2^+$ be the one-sided sequence that is determined by
\[
    F^i(X(\de))\in \CC_{\xi_i}, \quad i\ge 0.
\]
We let $\De_0\eqdef\De^\cut_\varepsilon(X(\delta))$ and (using Notation~\ref{n.cylinders}) recursively define for $i\ge 0$
\[
    \De_{i+1}
    \eqdef F(\De_i \cap \CC_{\xi_i})=F^{i+1}(\De_0\cap \CC_{[\xi_0\dots \xi_i]}).
\]
The uniform expansion in the $u$-direction implies that there is a
least iterate $i_0$ such that we cover the unstable vertical
direction, that is, such that
\begin{equation}\label{e.right}
    \De_{i_0}=\{y^s(\delta)\} \times [0,1]^u\times L
\end{equation}
for some point $y^s(\delta)\in[0,1]^s$ and some interval $L\subset(0,1)$. Clearly, this covering
property is also true for any $i\ge i_0$.

Notice that, in general, we have no information about the location of the interval $L$. Thus, in principle, we cannot  apply our preliminary results about expanding itineraries in Section~\ref{ss.expanding} and we need to consider some additional iterates of $L$. More precisely, let us now first consider some image $H$ of $L$ by the iterated function system such that we can apply these arguments to $H$. Recall that, in particular, such interval $H$ must be contained in $[f_0^{-2}(b_0),b_0]$. Let us take a $j_0$ large enough such that $f_0^{j_0}(L)$ is close enough to $1$ and that
\[
	f_{[0^{j_0}1]} (L)\subset \big(0,f_0^{-1}(a_0)\big) = \big(0,f_0^{-2}(b_0)\big).
\]
Consider now the smallest number $\ell_0\ge 0$ such that
\[
f_{[0^{j_0}10^{\ell_0}]} (L)\cap \big(f_0^{-2}(b_0), b_0\big]\ne\emptyset
\]
and consider the finite sequence $\eta\eqdef (0^{j_0}10^{\ell_0})$. Let us define
$$
H\eqdef f_{[\eta]}(L) \cap \big[f_0^{-2}(b_0),b_0\big]
$$
and consider the disk
\begin{equation}\label{e.eqeq}
    \widetilde \De
    \eqdef F^{j_0+1+\ell_0} \big( \De_{i_0} \cap \CC_{[\eta]} \big)
    = \{\widetilde y^s\} \times [0,1]^u  \times H,
\end{equation}
where $\widetilde y^s$ is some point in $[0,1]^s$. In comparison
to~\eqref{e.right}, this disk is now appropriate to apply our
arguments on expanding itineraries.

By Remark~\ref{r.newrema49}, if $H$ contains a fundamental domain
of $f_0$ then $\widetilde \De$ meets $W^s(\bQ,F)$  transversely
and, since $\widetilde\Delta$ is a positive iterate of
$\Delta^\cut_\varepsilon(X(\delta))$, we are done already in this
case.

In the general case we will see that some forward iterate  of $H$ will
contain the fundamental domain $[f_0^{-2}(b_0),f_0^{-1}(b_0)]$. To prove that, we apply our results about expanded successors in Section~\ref{ss.expanding}. By Lemma~\ref{l.successor}, there exist expanded successors $H=H_{\langle 0\rangle}$, $H_{\langle1\rangle}$, $\ldots$, $H_{\langle i(H)\rangle}$ of $H$ such that $H_{\langle i(H)\rangle}$ contains the fundamental domain  $[f_0^{-2}(b_0),f_0^{-1}(b_0)]$. Together with the expanded successor, for $j=0,\ldots, i=i(H)$, we obtain an expanded finite sequence $\xi_{\langle j \rangle} = \xi(H_{\langle j (H)\rangle})$ of length $\lvert \xi_{\langle j\rangle}\rvert$, recall equation \eqref{e.Ji}.

We now define recursively a sequence of disks as follows. Let $\widehat \Delta_0 \eqdef
\widetilde\Delta$ with $\widetilde\Delta$ defined
in~\eqref{e.eqeq} and for $j=0$, $\ldots$, $i(H)-1$ let
\[
    \widehat \De_{j+1}
    \eqdef F^{|\xi _{\langle j\rangle} |} (\widehat \De_j \cap \CC_{[\xi_{\langle j\rangle}]}).
\]
Notice that
\[
    \widehat \De_{j+1}
    = \{y_{j+1}^s\}\times [0,1]^u \times f_{[\xi_{\langle j\rangle}]} (H_{\langle j\rangle})
     = \{ y_{j+1}^s \}\times [0,1]^u \times H_{\langle j+1\rangle},
\]
for some point $y_{j+1}^s\in[0,1]^s$. As $H_{\langle i(H)\rangle}$
contains the fundamental domain $[f_0^{-2}(b_0),f_0^{-1}(b_0)]$ of
$f_0$, by Remark~\ref{r.newrema49} the disk $\widehat \Delta_{
i(H)}$ meets $W^s( Q^\ast,F)$ transversely. Hence, the
proposition is proved in that case also.
\end{proof}

As a consequence of the proof of
Proposition~\ref{p.delta+deltaepsilon} we obtain the following.

\begin{rema}\label{c.01}
{\rm
    Observe that we have $\{(0^s,0^u)\}\times (0,1)\subset H( Q^\ast,F)$. As the homoclinic class is a closed set, we can conclude that $\{(0^s,0^u)\}\times [0,1]\subset H( Q^\ast,F)$.
    In particular, we have $P$, $Q\in H( Q^\ast,F)$.
}\end{rema}

\begin{rema}\label{r.remax}{\rm
The proof of the proposition implies that for any hyperbolic
periodic point $R\ne Q$ of index $u+1$ the manifolds
$W^u(R,F)$ and $W^s( Q^\ast,F)$ intersect transversely.}
\end{rema}

This remark, Corollary~\ref{c.wubq}, and the fact that homoclinic relation is an
equivalence relation, together imply the following result.

\begin{coro}\label{c.homoclinicallyrelated}
	Every pair of saddles of index $u+1$ in $\La_F$ that are different from $Q$ are homoclinicaly related.
\end{coro}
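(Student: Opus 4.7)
The plan is to use $Q^\ast$ as a hub saddle: I will show that every saddle $R$ of index $u+1$ in $\Lambda_F$ with $R\neq Q$ is homoclinically related to $Q^\ast$, and then close the argument by invoking transitivity of the homoclinic relation among saddles of a fixed index.

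For the hub step, given any such saddle $R$, I first observe that $R\neq P$ automatically, since $P$ has index $u\neq u+1$. Hence Corollary~\ref{c.wubq} applies and yields $W^u(Q^\ast,F)\pitchfork W^s(R,F)\neq\emptyset$. In the other direction, Remark~\ref{r.remax} (which is precisely the output of the argument in the proof of Proposition~\ref{p.delta+deltaepsilon}, once one notes that $Q^\ast$ and $R$ play symmetric roles as index $u+1$ saddles different from $Q$) gives $W^u(R,F)\pitchfork W^s(Q^\ast,F)\neq\emptyset$. Putting these two transverse cyclic intersections together shows that $R$ and $Q^\ast$ are homoclinically related in the sense of Definition~\ref{def:homcla}.

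For the closing step, let $R_1,R_2\in\Lambda_F$ be saddles of index $u+1$ with $R_1,R_2\neq Q$. If one of them equals $Q^\ast$ the previous paragraph already gives the conclusion; otherwise each $R_i$ is homoclinically related to $Q^\ast$ by the hub step. Since the homoclinic relation is an equivalence relation on the set of hyperbolic saddles of the same index, $R_1\sim Q^\ast\sim R_2$ implies $R_1\sim R_2$, which is the claim.

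The only non-trivial point in the whole argument is the transitivity of the homoclinic relation, and this is a standard consequence of the inclination ($\lambda$) lemma: given transverse heteroclinic intersections $W^u(R_1,F)\pitchfork W^s(Q^\ast,F)$ and $W^u(Q^\ast,F)\pitchfork W^s(R_2,F)$ between index $u+1$ saddles, one $\lambda$-lemma iteration of a disk inside $W^u(R_1,F)$ along the orbit of $Q^\ast$ produces disks $C^1$-close to $W^u(Q^\ast,F)$, which therefore meet $W^s(R_2,F)$ transversely; the reverse intersection is obtained symmetrically. I do not expect a genuine obstacle here; the real work is already contained in Corollary~\ref{c.wubq} and Remark~\ref{r.remax}, and this corollary is essentially just their bookkeeping.
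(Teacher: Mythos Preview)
Your proof is correct and follows exactly the paper's approach: the paper derives the corollary directly from Corollary~\ref{c.wubq}, Remark~\ref{r.remax}, and the fact that the homoclinic relation is an equivalence relation. Your write-up simply unpacks these three ingredients (including the observation $R\ne P$ needed to invoke Corollary~\ref{c.wubq}) and adds a sketch of why transitivity holds via the $\lambda$-lemma.
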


We finally formulate a simple fact.

\begin{lemm}\label{l.almfin}
    Given any sequence $\xi\in\Sigma_2$, the point $(x^s,x^u)=\varpi^{-1}(\xi)$, and some point $x\in I_{[\xi]}$, we have $X=(x^s,x^u,x)\in\Lambda_F$.
\end{lemm}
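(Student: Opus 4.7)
The strategy is to unwind the definition $\Lambda_F=\Lambda_F^+\cap\Lambda_F^-$ with $\Lambda_F^\pm=\bigcap_{i\in\NN}F^{\pm i}(\CC)$ and verify directly that both the forward and backward orbit of $X$ stay in $\CC=\widehat\CC\times[0,1]$. The base coordinate is handled for free: since $(x^s,x^u)=\varpi^{-1}(\xi)\in\Ga$ and $\varpi$ conjugates $\Phi|_\Ga$ to the full shift $\sigma$ on $\Sigma_2$, the complete bi-infinite orbit $\{\Phi^i(x^s,x^u)\}_{i\in\ZZ}$ lies in $\widehat\CC$, so only the central coordinate requires attention.

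First I would check $X\in\Lambda_F^-$, i.e.\ that $F^n(X)\in\CC$ for all $n\ge0$. By the very definition of $F$ (Definition~\ref{d.defF}), the maps $f_0,f_1$ send $[0,1]$ into $[0,1]$, so each composition $f_{[\xi_0\ldots\xi_{n-1}]}(x)$ remains in $[0,1]$, and hence $F^n(X)=(\Phi^n(x^s,x^u),f_{[\xi_0\ldots\xi_{n-1}]}(x))\in\widehat\CC\times[0,1]=\CC$. Note that the symbolic coding $\xi_i$ is consistent with the itinerary of $X$ under $F$ because $(\varpi(\Phi^i(x^s,x^u)))_0=\xi_i$, so $F^i(X)\in\CC_{\xi_i}$ and the correct fiber map is applied at each step.

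Next I would construct the backward orbit, and this is where the hypothesis $x\in I_{[\xi]}$ does the essential work. Fix $m\ge1$. By the nested definition
\[
	I_{[\xi]}\subset I_{[\xi_{-m}\ldots\xi_{-1}.]}
	=(f_{\xi_{-1}}\circ\cdots\circ f_{\xi_{-m}})([0,1]),
\]
so there exists $y_m\in[0,1]$ with $(f_{\xi_{-1}}\circ\cdots\circ f_{\xi_{-m}})(y_m)=x$; injectivity of $f_0$ and $f_1$ makes $y_m=f_{[\xi_{-m}\ldots\xi_{-1}.]}(x)$ unique. Setting $X_{-m}\eqdef(\Phi^{-m}(x^s,x^u),y_m)\in\CC$, one checks that $F^m(X_{-m})=X$, so $X\in F^m(\CC)$ for every $m\ge1$, i.e.\ $X\in\Lambda_F^+$. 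Combining, $X\in\Lambda_F$.

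No serious obstacle is anticipated: the lemma is a straightforward reconciliation of the admissibility condition encoded in $I_{[\xi]}$ with the requirement that the full $F$-orbit stay in $\CC$. The only mild subtlety worth spelling out is the asymmetry between the two sides of $\xi$: the forward symbols $(\xi_0\xi_1\ldots)$ never pose a problem because $f_0,f_1$ are self-maps of $[0,1]$, whereas the backward symbols $(\ldots\xi_{-1})$ require precisely the admissibility hypothesis $x\in I_{[\xi]}$ to yield preimages inside $[0,1]$.
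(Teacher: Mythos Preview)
Your argument is correct and follows the same line as the paper's own proof: use the conjugacy $\varpi$ to keep the base coordinate in $\widehat\CC$, and use the forward invariance of $[0,1]$ under $f_0,f_1$ together with the definition of $I_{[\xi]}$ to keep the central coordinate in $[0,1]$ for all $i\in\ZZ$. The paper's proof is simply a terser statement of exactly this, while you have spelled out the asymmetry between forward and backward iterates and the construction of the preimages $y_m$ explicitly.
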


\begin{proof}
    Recall Notation~\ref{n.Lorenzo}. As an immediate consequence of the skew-product structure of $F$, by definition of $I_{[\xi]}$ we have $x_i\in[0,1]$ for all $i\in\ZZ$. Since $F$ is topologically conjugate to the shift map one has that $(x^s_i,x^u_i)\in \widehat\CC$ for all $i\in\ZZ$. Hence $F^i(X)\in\CC$ for all $i\in\ZZ$ and thus $X\in\Lambda_F$.
\end{proof}

We are now ready to prove Theorem~\ref{th.homoclinicbarQ}.

\begin{proof}[Proof of Theorem~\ref{th.homoclinicbarQ}]
Clearly, given any $X=(x^s,x^u,x)\in H(Q^\ast,F)$, then $(x^s,x^u)\in\widehat\CC$.
... It hence remains to prove $\Lambda_F\subset H(Q^\ast,F)$. We consider two cases.\\[0.2cm]
\textbf{Case 1: $X=(x^s,x^u,x)\in \La_F$ and  $x_{-i}\in(0,1)$ for
infinitely many $i>0$.}
\\[0.1cm]
By Proposition~\ref{p.delta+deltaepsilon}, and using the notation there, there exists a point
\[
	X(\de)=\big(x^s(\de),x^u,x\big)
		\in\De^s_\de(X) \cap W^\ut(\bQ,F).
\]	
Note that this point belongs to the forward invariant set $\Lambda_F^+$ and that the disk $\De^\cut_\varepsilon (X(\de))\subset W^u( Q^\ast,F)$ transversely intersects $W^s( Q^\ast,F)$ and hence contains a transverse homoclinic
point of $\bQ$. Thus
\[
    \De^\cut_\varepsilon\big(X(\de)\big) \cap H(\bQ,F)\ne \emptyset.
\]
Since Proposition~\ref{p.delta+deltaepsilon} holds for any $\varepsilon>0$ we have that $X(\de)\in H( Q^\ast,F)$.
As
$\delta$ can be taken arbitrarily small, the point $X(\de)$ can be taken arbitrarily close to $X$ and thus $X\in H( Q^\ast,F)$. This implies the theorem in case 1.
\\[0.2cm]
\textbf{Case 2: There is $i_0$ such that $X=(x^s,x^u,x)\in \Lambda_F$ and $x_{-i}\in\{0,1\}$ for all $i\ge i_0$.}\\[0.2cm]
Replacing $X$ by its iterate $F^{-i_0}(X)$, we can assume that
$x_{-i}\in\{0,1\}$ for all $i\ge 0$.
We continue distinguishing yet two more cases.\\[0.2cm]
\textbf{Case 2.1: $x_0=1$.} \\[0.2cm]
Since $f_\ell(x)=1$ if and only if $\ell=0$ and $x=1$, as the only possibility for the backward branch of $X$ we must have $x_{-i}=1$ for all $i\ge 0$. Moreover, the sequence $\xi =\varpi^{-1}(x_0)$ must satisfy $\xi_{-i}=0$ for all $i\ge0$. Hence $X_{-i}\in\CC_0$ for all $i\ge 0$ and therefore the point $X$ is of the form $X=(0^s,x^u,1)$.

Note that  $I_{[\xi]}=[0,1]$. Thus, by Lemma~\ref{l.almfin}, given any $\tau>0$
for every  $y\in (1-\tau,1)$ there is a point $Y=(0^s,x^u,y)$ that is in the set $\Lambda_F$. Note that these points form a uncountable set. Since
the set of all pre-images
\[
    \big\{ f_{[\xi_{-m}\ldots\xi_{-1}.]}\big(\{0,1\}\big)\colon
        ( \xi_{-m}\ldots\xi_{-1})\in\{0,1\}^{-m}, m\ge 1\big\}
\]
is countable, without loss of generality we can assume that the
point $Y$ and its preimages with central coordinates $y_{-i}$
additionally satisfies $y_{-i}\in(0,1)$ for all $i\ge 0$.
 Now we apply Case 1 to the point $Y$ and can conclude that $Y\in H( Q^\ast,F)$. Since a homoclinic class is a closed set and $Y$ can be chosen arbitrarily close to $X$, we yield $X\in H( Q^\ast,F)$.\\[0.2cm]
\textbf{Case 2.2: $x_0=0$.} \\[0.2cm]
To distinguish the two only possible types of backward branches of $X$ in this case, observe that $f_\ell(x)=0$ if either $x=0$ and $\ell=0$ or $x=1$ and  $\ell=1$.\\[0.2cm]
\textbf{Case 2.2 a:} We have  $x_{-i}=0$ for all $i\ge 0$. Hence in this case $X_{-i}\in\CC_0$ for all $i\ge 0$, and we can conclude as in Case 2.1.\\[0.2cm]
\textbf{Case 2.2 a:} There exists a first index $i$ such that
$x_{-i}=1$. Replacing $X$ by the iterate $f^{-i}(X)$,
we can now conclude as in Case 2.1.\\[0.2cm]
This proves that $\Lambda_F\subset H(Q^\ast,F)$ and hence the proves the theorem.
\end{proof}

\subsection{Particular cases}

Supplementing the results in the previous section we show that, under additional mild hypotheses on the maps $f_0$, $f_1$, we yield further properties of the homoclinic class.

First, we assume that the following Kupka Smale-like condition is satisfied.

\begin{itemize}
\item[(F${}_{\rm KS}$)]
	Every periodic point of any composition $f_{[\xi_0\ldots\xi_m]}$ is hyperbolic.
\end{itemize}
Note that this condition is generic among the pair of maps $f_0$, $f_1$ satisfying  conditions~(F0),~(F1), and~(F01).

\begin{theoo}\label{t.KS}
	Under the additional hypothesis (F${}_{\rm KS}$), for every periodic sequence $\xi=(\xi_0\ldots\xi_{m-1})^\ZZ$ there is a periodic point $R_\xi$ of $F$ of index $u$
	 in the fiber of $\xi$
(that is, $\pi(R_\xi)=\xi$) that is homoclinically related to $P$.
\end{theoo}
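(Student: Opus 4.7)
The plan is to reduce the statement to a one-dimensional question about the IFS on $[0,1]$, construct an attracting periodic point with $1$ in its basin of attraction, and then invoke the intersection remarks of Section~\ref{s.tranhomint}. In the trivial case $\xi=0^\ZZ$ one simply takes $R_\xi=P$. Assuming $\xi\ne 0^\ZZ$, set $g\eqdef f_{[\xi_0\ldots\xi_{m-1}]}\colon[0,1]\to[0,1]$; by Remark~\ref{r.fixe} it is enough to produce a fixed point $r$ of $g^k$ for some $k\in\{1,2\}$ with $\lvert(g^k)'(r)\rvert<1$, since then $R_\xi\eqdef(\varpi^{-1}(\xi),r)$ is automatically a hyperbolic $F$-periodic saddle of index $u$ in the fiber of $\xi$.

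I would then construct such an $r$ as follows. Brouwer's theorem produces a fixed point of the continuous monotone map $g$, and (F${}_{\rm KS}$) makes every periodic point of $g$ hyperbolic. When $g$ is orientation-preserving, take $r$ to be the \emph{largest} fixed point of $g$. Because $\xi\ne 0^\ZZ$ and $f_1$ contracts $[0,1]$ strictly into $[0,f_1(0)]$ with $f_1(0)<1$ (from (F1.i)), one has $g(1)<1$; hence $g(x)\ne x$ on $(r,1]$, and continuity of $g(x)-x$ together with $g(1)\le 1$ forces $g(x)<x$ on $(r,1]$. A first-order Taylor expansion of $g$ at $r$ then yields $g'(r)\le 1$, which hyperbolicity promotes to $g'(r)<1$, so $r$ is attracting. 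Monotonicity of $g$ on $(r,1]$ also gives $g^n(1)\searrow r$, i.e., $1\in W^s(r,g)$. When $g$ is orientation-reversing, I apply exactly the same argument to the orientation-preserving map $g^2$ (which still satisfies $g^2(1)<1$), obtaining an attracting fixed point $r$ of $g^2$, equivalently an attracting $g$-periodic point of period dividing $2$, again with $1\in W^s(r,g^2)$.

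Finally, I would check the homoclinic relation with $P$. One direction, $W^u(R_\xi,F)\pitchfork W^s(P,F)\ne\emptyset$, is immediate from Remark~\ref{r.suusintersect} since $R_\xi\ne P,Q$. The reverse direction is exactly Remark~\ref{r.RsuP} applied to the length-$km$ sequence $(\xi_0\ldots\xi_{m-1})^k$ and the map $g^k$: its hypothesis ``$W^s(r,g^k)$ contains the forward orbit of $1$'' is precisely the basin statement $1\in W^s(r,g^k)$ established in Step~2, yielding $W^u(P,F)\pitchfork W^s(R_\xi,F)\ne\emptyset$. Both transverse intersections together show that $R_\xi$ is homoclinically related to $P$.

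The main subtlety of the plan lies in the choice of the \emph{largest} fixed point of $g^k$ rather than the smallest in Step~2. The same graph-crossing argument applied at the smallest fixed point also produces an attracting periodic point, but its basin need not contain $1$, so Remark~\ref{r.RsuP} would not directly apply. Working at the right endpoint exploits the asymmetry $g^k(1)\le 1$, which is strict precisely because $\xi\ne 0^\ZZ$ and $f_1$ is a genuine contraction, and this is what makes $1$ lie in the basin of $r$ and thereby unlocks the existing intersection remark.
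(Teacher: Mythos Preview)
Your proof is correct and follows essentially the same route as the paper: produce an attracting periodic point for the fiber map whose basin contains an endpoint of $[0,1]$, then invoke Remarks~\ref{r.suusintersect} and~\ref{r.RsuP}. The paper obtains this point as an endpoint of the nested intersection $\bigcap_{k\ge1}g^{2k}([0,1])$, which is exactly your ``largest fixed point of $g^2$'', so the two arguments coincide up to phrasing; your explicit treatment of the case $\xi=0^\ZZ$ and your split into orientation-preserving versus orientation-reversing $g$ are cosmetic variants of the paper's uniform use of $g^2$.
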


\begin{proof}
The arguments in the proof of Lemma~\ref{l.cont} and the hypothesis (F${}_{\rm KS}$) together imply that the set
$$
	\bigcap_{k\in \NN} (f_{[\xi_0\ldots \xi_{m}]})^{2k} ([0,1])
$$
is either an attracting fixed point of $f_{[\xi_0\ldots \xi_{m}]}$ or an interval whose
extremes are hyperbolic attracting periodic points of $f_{[\xi_0\ldots
\xi_{m}]}$. In either of these two cases let us consider one such attracting point and denote it by $r_\xi$. By construction $W^s(r,f_{[\xi_0\ldots\xi_{m}]})$ either contains $0$ or $1$. By
Remark~\ref{r.RsuP}, this implies that $W^s(R_\xi,F)$ transversely
intersects $W^u(P,F)$.
On the other hand, by Remark~\ref{r.suusintersect} we know that $W^u(R_\xi,F)$ and $W^s(P,F)$ intersect transversely. This implies that the saddles $R_\xi$ and $P$ are homoclinically related proving the proposition.
\end{proof}

Recall that in the previous case under the conditions (F0), (F1), (F01) we have $H(P,F)\subset\Lambda_F$. We now consider another particular case. Let us assume that the maps $f_0$ and $f_1$ satisfy the following condition.

\begin{itemize}
	\item[(F${}_{\rm B}$)] If $f_1([0,1])=[0,c\,]$ then $f_0'(x)\in (0,1)$ for all $x\in[c,1]$.
\end{itemize}

\begin{theoo}\label{t.blenders}
	Under the additional hypothesis (F${}_{\rm B}$) we have $H(P,F)=\Lambda_F= H( Q^\ast,F)$.
\end{theoo}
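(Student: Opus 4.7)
My plan is the following. By Theorem~\ref{th.homoclinicbarQ} we have $\Lambda_F=H(Q^\ast,F)$ and the inclusion $H(P,F)\subset\Lambda_F$ is immediate, so I only need to establish $\Lambda_F\subset H(P,F)$. Since $P$ and $\widehat P$ are homoclinically related (Lemma~\ref{l.hatP}), one has $H(P,F)=H(\widehat P,F)$, and it will suffice to exhibit, near every $X\in\Lambda_F$, an index-$u$ periodic saddle of $F$ inside $\Lambda_F$ that is homoclinically related to $P$, since the closure of such saddles lies in $H(P,F)$.

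The central role of (F${}_{\rm B}$) is captured by the following blender-type statement that I would prove first: for every periodic sequence $\xi=(\xi_0\ldots\xi_{m-1})^\ZZ\ne 0^\ZZ$ such that $f_{[\xi_0\ldots\xi_{m-1}]}$ has an attracting fixed point $r\in(0,1)$, the basin $W^s(r,f_{[\xi_0\ldots\xi_{m-1}]})$ equals the entire interval $[0,1]$. The key geometric input is that under (F${}_{\rm B}$) the interval $[c,1]$ is a trapping contraction region for $f_0$ and $f_1$ is a global contraction with image $[0,c]$; combined with the monotonicity and injectivity of $f_{[\xi_0\ldots\xi_{m-1}]}$ as a self-map of $[0,1]$, this forbids the existence of any competing attractor and forces every orbit to converge to $r$. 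Lemma~\ref{l.homrelnegspec}(1) then implies that the associated periodic saddle of $F$ of index $u$ is homoclinically related to $P$.

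For the density step, given $X=(\widehat x,x)\in\Lambda_F$ with itinerary $\xi=\varpi(\widehat x)$ and $\varepsilon>0$, I would combine the sweeping property of Proposition~\ref{l.expiti} with the contracting composition construction of Proposition~\ref{p.lajk} to produce a periodic sequence $\eta=(\eta_0\ldots\eta_{m-1})^\ZZ$ that agrees with $\xi$ on a long central window $\lvert i\rvert\le N$ and such that $f_{[\eta_0\ldots\eta_{m-1}]}$ is a contraction whose attracting fixed point $r$ lies in the admissible domain $I_{[\eta^\ZZ]}$ within distance $\varepsilon$ of $x$. The associated saddle $R=(\varpi^{-1}(\eta^\ZZ),r)\in\Lambda_F$ has index $u$, is $\varepsilon$-close to $X$, and by the previous step belongs to $H(P,F)$. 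Letting $\varepsilon\to 0$ gives $X\in H(P,F)$.

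The main obstacle will be the first step: proving that (F${}_{\rm B}$) forces every contracting composition to have $[0,1]$ as its global basin. Existence of contracting compositions is handled by Proposition~\ref{p.lajk}, but ruling out additional fixed points of $f_{[\xi_0\ldots\xi_{m-1}]}$ and controlling the monotone dynamics under the trapping structure of (F${}_{\rm B}$) requires a careful case analysis depending on whether $f_{[\xi_0\ldots\xi_{m-1}]}$ is orientation-preserving or orientation-reversing and on the interplay of iterates between the regions $[0,c]$ and $[c,1]$.
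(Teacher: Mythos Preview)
Your central ``blender-type statement''---that under (F${}_{\rm B}$), every attracting fixed point $r\in(0,1)$ of a composition $f_{[\xi_0\ldots\xi_{m-1}]}$ has basin equal to $[0,1]$---is false. Hypothesis (F${}_{\rm B}$) only forces $f_0$ to contract on $[c,1]$; the expanding region of $f_0$ near $0$ persists, and with it the expanding itineraries of Section~\ref{ss.expanding}. Take any repelling fixed point $q$ produced by Lemma~\ref{l.newfixedexpandingpoint}; by Lemma~\ref{l.cont} the squared map $f_{[\xi_0\ldots\xi_{m-1}]}^2$ has fixed points $p_\infty<q<\widetilde p_\infty$, and (generically, or under (F${}_{\rm KS}$)) these are attracting. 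Their basins are separated by $q$ and hence cannot be all of $[0,1]$. So the first step of your plan cannot succeed as stated, and without it Lemma~\ref{l.homrelnegspec}(1) does not apply to the periodic points you produce.

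Your density step is also incomplete: you do not explain why the attracting fixed point $r$ of the constructed composition is close to the central coordinate $x$ of $X$. Matching $\eta$ with $\xi$ on a long window controls only the base coordinate; the fibre position of $r$ lies in $I_{[\eta]}$, which depends on the tail you append and need not be near $x$ when $I_{[\xi_{-N}\ldots\xi_{-1}.]}$ is a long interval.

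The paper takes a different and more direct route: it proves an analogue of Proposition~\ref{p.delta+deltaepsilon} for $P$ in place of $Q^\ast$, and then repeats verbatim the argument of Theorem~\ref{th.homoclinicbarQ}. The point of (F${}_{\rm B}$) is not a statement about basins of forward compositions but about \emph{backward} iterates: it makes $f_0^{-1}|_{[c,1]}$ and $f_1^{-1}|_{[0,c]}$ uniformly expanding by some $\kappa>1$, so for any small interval $J\subset(0,1)$ one can choose symbols $\xi_{-1},\xi_{-2},\ldots$ so that $f_{[\xi_{-m}\ldots\xi_{-1}.]}(J)$ grows until it contains $c=f_1(0)$. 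Since $c$ lifts to a transverse intersection of $W^u(P,F)$ and $W^s(P,F)$, this yields transverse homoclinic points of $P$ in every small center-stable disk, giving $\Lambda_F\subset H(P,F)$ directly, without any periodic-point approximation.
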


The proof follows a line of argument somewhat analogous to the one of Theorem~\ref{th.homoclinicbarQ}.  Moreover, our arguments follow very closely the exposition in~\cite[Section 6.2]{BonDiaVia:05} using a construction of so-called \emph{blenders}.

First, we have a completely analogous version of Proposition~\ref{p.delta+deltaepsilon}.

\begin{prop}\label{p.delta+deltaepsilonbis}
	Consider a point $Y=(y^s,y^u,y)\in \La_F$ such that $y_i\in (0,1)$ for infinitely many $i\ge 0$.  Given $\delta>0$, the disk
	$$
		\De^u_\de (Y)\eqdef \{y^s\} \times B^u_\de(y^u) \times \{y\}.
	$$
	transversely intersects $W^s(P,F)$. Given a point
	\[
		Y(\de) \eqdef \big(y^s, y^u(\de),y\big)\in
		\De^u_\de (Y) \pitchfork W^s(P,F),
	\]	
	then for every small $\varepsilon>0$ the disk
	$$
		\De^{cs}_\varepsilon (Y(\de))\eqdef
		B^s_\varepsilon(y^s) \times \{y^u(\de)\} \times [y-\varepsilon,y+\varepsilon]
		\subset W^s(P,F)
	$$
	intersects $W^u(P,F)$ transversely.
\end{prop}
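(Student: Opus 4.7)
The plan is to parallel the three-part structure of the proof of Proposition~\ref{p.delta+deltaepsilon}, exchanging the roles of the strong stable and strong unstable directions and invoking the blender hypothesis (F${}_{\rm B}$) at the critical step. More precisely, I would split the statement into: (i) $\De^u_\delta(Y)\pitchfork W^s(P,F)\ne\emptyset$; (ii) $\De^{cs}_\varepsilon(Y(\delta))\subset W^s(P,F)$ for small $\varepsilon$; and (iii) $\De^{cs}_\varepsilon(Y(\delta))\pitchfork W^u(P,F)\ne\emptyset$.

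For (i), I would iterate $\De^u_\delta(Y)$ forward. Since $y_i\in(0,1)$ for infinitely many $i\ge 0$ and $F$ uniformly expands $E^u$, for some such $i$ the image $F^i(\De^u_\delta(Y))$ contains a full $u$-box $\{\widetilde y^s\}\times[0,1]^u\times\{\widetilde y\}$ with $\widetilde y=y_i\in(0,1)$; as $W^s_\loc(P,F)\supset[0,1]^s\times\{0^u\}\times(0,1]$ and $0^u\in[0,1]^u$, this box meets $W^s_\loc(P,F)$ transversely at $(\widetilde y^s,0^u,\widetilde y)$ (tangent $E^s\oplus E^c$ complementing tangent $E^u$), so pulling back by $F^{-i}$ gives the claim. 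For (ii), $Y(\delta)\in W^s(P,F)$ means $F^n(Y(\delta))\in W^s_\loc(P,F)$ for some $n\ge 0$; the affine structure of $\Phi$ on $\widehat\CC_0,\widehat\CC_1$ together with the skew-product form of $F$ makes $W^s(P,F)$ locally a product $B^s\times\{\ast\}\times(\text{interval})$, so pulling back a small neighbourhood of $F^n(Y(\delta))$ in $W^s_\loc(P,F)$ by $F^{-n}$ yields a piece of $W^s(P,F)$ that contains $\De^{cs}_\varepsilon(Y(\delta))$ once $\varepsilon$ is small enough.

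Step (iii) is the heart of the matter and is where (F${}_{\rm B}$) enters essentially; I would follow the blender construction of~\cite[Section 6.2]{BonDiaVia:05}. Forward iterates of $W^u_\loc(P,F)=\{0^s\}\times[0,1]^u\times\{1\}$ produce $u$-disks $\{\widehat y^s\}\times[0,1]^u\times\{\widehat y\}\subset W^u(P,F)$ parametrised by finite symbol words $\eta$, where $\widehat y=f_{[\eta]}(1)$ and the horseshoe coordinates $\widehat y^s$ are dense in $[0,1]^s$ by transitivity of $\Phi$. Writing $c=f_1(0)$ so that $f_1([0,1])=[0,c]$, condition (F${}_{\rm B}$) makes $f_0$ a contraction on $[c,1]$; equivalently, $f_0^{-1}$ (on $[f_0(c),1]$) and $f_1^{-1}$ (on $[0,c]$) are both expanding with overlapping images, so that the inverse IFS acquires a full covering property on a neighbourhood of $[c,1]$. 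Combined with the sweeping statement of Proposition~\ref{l.expiti}, this forces the set of central heights $\{f_{[\eta]}(1)\}$ to be dense in $[0,1]$, and a joint choice of $\eta$ realising both $\widehat y\in[y-\varepsilon,y+\varepsilon]$ and $\widehat y^s\in B^s_\varepsilon(y^s)$ produces a $u$-disk of $W^u(P,F)$ meeting $\De^{cs}_\varepsilon(Y(\delta))$ transversely at $(\widehat y^s,y^u(\delta),\widehat y)$.

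The main obstacle is step (iii). The dimensions match exactly ($\dim W^u(P,F)+\dim\De^{cs}_\varepsilon=u+(s+1)=\dim\CC$), so the transversality is borderline and one must simultaneously prescribe both the $s$- and the $c$-coordinate of the approximating $u$-disk. Density in the $s$-direction follows from the horseshoe alone, but density of the corresponding central heights is not a consequence of (F0), (F1), (F01) on their own: it is precisely (F${}_{\rm B}$) that upgrades the central IFS to a covering IFS for which this density holds. Producing $\eta$ that realises both densities at once — quantitatively controlling both the horseshoe itinerary and the central IFS itinerary coming from the same word — is the main technical point, and this is the step that I would import from the blender technology of~\cite[Section 6.2]{BonDiaVia:05}.
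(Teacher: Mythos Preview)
Your steps (i) and (ii) are correct and match what the paper does (the paper simply says ``the first steps are identical to the ones of the proof of Proposition~\ref{p.delta+deltaepsilon}'').

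For step (iii), however, your approach diverges from the paper's and has a gap. The paper does \emph{not} iterate $W^u(P,F)$ forward looking for a $u$-disk that lands inside the small target $\De^{cs}_\varepsilon(Y(\delta))$; it iterates $\De^{cs}_\varepsilon(Y(\delta))$ \emph{backward}. Under $F^{-1}$ the $s$-direction expands, so after finitely many steps the $s$-component is all of $[0,1]^s$; once this happens, at each further backward step both symbol choices are available. For the central interval $J=[y-\varepsilon,y+\varepsilon]$ one then picks $\xi_{-m-1}\in\{0,1\}$ according to whether the current interval $f_{[\xi_{-m}\ldots\xi_{-1}.]}(J)$ lies in $[0,c)$ or $(c,1]$: hypothesis (F${}_{\rm B}$) makes the corresponding inverse branch uniformly expanding, so the central interval grows by a definite factor $\kappa>1$ at each step until it contains $c=f_1(0)$. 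At that moment the backward iterate of $\De^{cs}_\varepsilon(Y(\delta))$ is a full slice $[0,1]^s\times\{\widetilde y^u\}\times(\text{interval}\ni c)$, and it meets transversely the single $u$-disk $\{\widetilde y^s\}\times[0,1]^u\times\{c\}\subset W^u(P,F)$ coming from~\eqref{e.ys}. Pulling forward gives the desired transverse intersection.

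The point is that the backward scheme \emph{decouples} the two constraints you identify as the main obstacle: because the $s$-extent becomes full, \emph{any} $u$-disk of $W^u(P,F)$ at height $c$ suffices, so there is no need to coordinate the $s$- and $c$-coordinates of an approximating disk. Your forward approach requires the joint density of the pairs $(\widehat y^s,f_{[\eta]}(1))$ over finite words $\eta$, and the sketch you give does not establish it: Proposition~\ref{l.expiti} concerns forward iterates of \emph{intervals}, not the forward orbit of the single point~$1$, and the covering property of the inverse IFS does not by itself yield density of $\{f_{[\eta]}(1)\}$, let alone joint density with the horseshoe $s$-coordinate determined by the same word. Incidentally, the blender argument in~\cite[Section~6.2]{BonDiaVia:05} that you invoke is precisely the backward-iteration scheme the paper uses, not a forward density argument; so if you carry out what that reference actually does, you recover the paper's proof rather than the one you outline.
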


After proving this proposition the proof of Theorem~\ref{t.blenders} is identical to the one of Theorem~\ref{th.homoclinicbarQ}, so we refrain from giving these details.

\begin{proof}[Proof of Proposition~\ref{p.delta+deltaepsilonbis}]
	The first steps of the proof are identical to the ones of the proof of Proposition~\ref{p.delta+deltaepsilon}.

Further, to show that we have $ \De^{cs}_\varepsilon(Y(\de))\pitchfork W^s(P,F)\ne\emptyset$, we consider the iterate of an interval $J\eqdef [y-\varepsilon, y+\varepsilon]$ by maps $f_{[\xi_{-m}\ldots\xi_{-1}.]}$.
First note that, under the hypothesis (F${}_{\rm B}$), the maps $f_0^{-1}$ and $f_1^{-1}$ are uniformly expanding maps in $[c,1]$ and $[0,c]$, respectively, with derivatives having moduli $\ge\kappa>1$. It is hence an immediate consequence (see also the Lemma in~\cite{BonDiaVia:95})  that $f_{[\xi_{-m}\ldots\xi_{-1}.]}(J)$ for large $m$ eventually contains the point $c$.
Just observe that if $c\notin f_{[\xi_{-m}\ldots\xi_{-1}.]}(J)$ then either $f_{[\xi_{-m}\ldots\xi_{-1}.]}(J)\in[0,c)$ or $f_{[\xi_{-m}\ldots\xi_{-1}.]}(J)\in(c,1]$. In the first case let $\xi_{-m-1}=0$ while in the second one let $\xi_{-m-1}=1$.  Consequently, $\lvert  f_{[\xi_{-m-1}\ldots\xi_{-1}.]}(J)\rvert\ge \kappa^{m+1} \lvert J\rvert$. Thus, there is a first $m$ with the desired property.

Now, to finish the proof, note that the skew-product structure hence implies that there exist points ${\widetilde y\,}^u\in [0,1]^u$ such that
$$
	[0,1]^s\times \{{\widetilde y\,}^u\} \times f_{[\xi_{-m}\ldots\xi_{-1}.]}(J)
	\subset W^s(P,F)
$$
and, recalling that $c=f_1(0)$, it implies that there is ${\widetilde y\,}^s\in [0,1]^s$ so that
$$
	\{{\widetilde y\,}^s\} \times [0,1]^u \times \{f_1(0)\}
	\subset W^u(P,F).
$$
This means that we have $\De^{cs}_\varepsilon (Y(\de)) \pitchfork W^u(P,F)\ne\emptyset$.
\end{proof}

\section{Lyapunov exponents in the central direction}\label{sec:lyap}

We now continue our discussion of Lyapunov exponents started in
Section~\ref{ss.lyap}. Recall that, due to the skew product
structure and our hypotheses, the splitting in~\eqref{e.splitt} is
dominated and for every Lyapunov regular point coincides with the
Oseledec splitting provided by the multiplicative ergodic
theorem. Here, in particular, a point $S$ is \emph{Lyapunov regular}
if and only if for $i=uu$, $c$, and $ss$ for every $v\in E^i_S$
the limit
\begin{equation}\label{e.lyapli}
    \chi_i(S)\eqdef
    \lim_{n\to\pm\infty}\frac{1}{n}\log\,\lVert dF^n_S(v)\rVert
\end{equation}
exists. In the following we will focus only on the
\emph{Lyapunov exponent $\chi_{\rm c}(S)$ associated to the central direction
$E^c$}. Observe that given a Lyapunov regular
point $S=(s^s,s^u,s)\in \La_F$ and
$\xi=(\ldots\xi_{-1}.\xi_0\xi_1\ldots)\in\Sigma_2$ given by
$\xi=\varpi((s^s,s^u))$, we have
\begin{equation}\label{e.lyapkat}
    \chi_c(S)
    = \lim_{n\to\infty}\frac 1 n \log\,\lvert (f_{[\xi_0\ldots\, \xi_{n-1}]})'(s)\rvert.
\end{equation}
Clearly, $\chi_c(S)$ is well-defined for every periodic point.

\subsection{Spectra of Lyapunov exponents}\label{sec:gapcentral}

Let us consider spectra of central exponents from various points of view.

\subsubsection{Spectrum related to periodic points}
Given a saddle $S$, we define the \emph{spectrum of  saddles homoclinically related to $S$} by
\[
    \cL_{\rm homrel}(S) \eqdef
    \big\{\chi_c(R)\colon R\text{ hyperbolic periodic homoclinically related to }S\big\}
\]
and the \emph{periodic point spectrum of the homoclinic class of $S$} by
\[
    \cL_{\rm per}\big(H(S,F)\big) \eqdef
    \big\{\chi_c(R)\colon R\in H(S,F)\text{ periodic}\big\}.
\]
Clearly, $\cL_{\rm homrel}(S) \subset\cL_{\rm per} (H(S,F))$. Let ${Q^\ast}$ be the saddle provided by Theorem~\ref{th.homoclinicbarQ}. Since the homoclinic
class $H( Q^\ast,F)$ coincides with the maximal invariant set
$\Lambda_F$, we have
\begin{equation}\label{e.spec}
    \cL_{\rm homrel}(Q) \cup \cL_{\rm homrel}( Q^\ast)
    \cup \cL_{\rm homrel}(P)
    \subset \cL_{\rm per}\big(H( Q^\ast,F)\big).
\end{equation}
Moreover, $\cL_{\rm homrel}(Q)=\{\log\beta\}$ by Lemma~\ref{l.hq}.

Let us recall the following standard fact (see also~\cite[Corollary 2]{AbdBonCroDiaWen:07}).

\begin{lemm}\label{l.homrelspec}
    Given two saddles  $S$ and $S'$ that are homoclinically related and satisfy $\chi_c(S)\le\chi_c(S')$, we have
    \[
        [\chi_c(S),\chi_c(S')]
        \subset \overline{\cL_{\rm homrel}(S)}
        = \overline {\cL_{\rm homrel}(S')}.
    \]
\end{lemm}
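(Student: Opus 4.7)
The plan is to use the standard Birkhoff--Smale construction of horseshoes from homoclinically related saddles, combined with a concatenation argument to realize convex combinations of the two exponents as limits of periodic exponents. First, since homoclinic relatedness is an equivalence relation (Remark~\ref{r.funda}), any saddle homoclinically related to $S$ is also homoclinically related to $S'$ and vice versa, so the equality $\cL_{\rm homrel}(S) = \cL_{\rm homrel}(S')$ is automatic at the level of sets; only the interval inclusion requires work.

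Second, because $W^\st(S,F)\pitchfork W^\ut(S',F)\neq\emptyset$ and $W^\ut(S,F)\pitchfork W^\st(S',F)\neq\emptyset$, the $\lambda$-lemma together with the Birkhoff--Smale theorem yields a locally maximal hyperbolic set $\Lambda_0\subset H(S,F)$ containing $S$, $S'$ and a pair of transverse heteroclinic orbits joining them, conjugate to a subshift of finite type on an alphabet that includes symbols for ``one loop along the orbit of $S$'', ``one loop along the orbit of $S'$'', and bounded-length ``transition'' words along the heteroclinic orbits. Every saddle in $\Lambda_0$ is homoclinically related to both $S$ and $S'$, so its central exponent lies in $\cL_{\rm homrel}(S)=\cL_{\rm homrel}(S')$.

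Third, fix a rational $t=a/(a+b)\in(0,1)$ and, for each large $n$, consider the periodic point $R_{a,b,n}\in\Lambda_0$ coded by $n$ repetitions of the block [transition $S\to S'$, $a$ loops near $S'$, transition $S'\to S$, $b$ loops near $S$]. Using that the splitting $E^\sst\oplus E^c\oplus E^\uut$ is dominated and the transition words have bounded length $L$, one has
\[
\log\big\lVert dF^{T_n}|_{E^c_{R_{a,b,n}}}\big\rVert
= an\cdot \mathrm{per}(S')\cdot \chi_c(S') + bn\cdot \mathrm{per}(S)\cdot \chi_c(S) + O(n),
\]
where $T_n$ is the total period and the $O(n)$ error comes from the $2n$ bounded transition blocks and from the continuous dependence of $\log\lVert dF|_{E^c}\rVert$ on the orbit (by domination, the central direction varies continuously, so the derivative along a shadowing orbit differs from the derivative along the shadowed orbit by a uniformly bounded multiplicative constant per loop). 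Dividing by $T_n=n(a\cdot\mathrm{per}(S')+b\cdot\mathrm{per}(S)) + 2nL$ and letting $n\to\infty$ produces a sequence of homoclinically related saddles whose exponents converge to $t\chi_c(S') + (1-t)\chi_c(S)$. Density of rationals in $[0,1]$ then gives $[\chi_c(S),\chi_c(S')]\subset\overline{\cL_{\rm homrel}(S)}$.

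The main obstacle is distortion control in the central direction under only $C^1$ regularity: one cannot invoke the usual $C^{1+\varepsilon}$ bounded distortion to equate the derivative on the shadowing orbit with that on the reference orbit. I would handle this via the domination of the splitting and a tempered-distortion estimate analogous to Lemma~\ref{l.distoe}, which suffices because the per-loop error in $\log\lVert dF|_{E^c}\rVert$ between the shadowing orbit and the reference periodic orbit grows only sub-exponentially in $n$, and is therefore absorbed in the $O(n)$ term after division by $T_n$.
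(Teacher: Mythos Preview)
Your overall strategy matches the paper's: build a horseshoe $\Lambda_0$ containing $S$ and $S'$, then use its Markov structure to manufacture periodic orbits spending prescribed proportions of time near each saddle. The equality $\cL_{\rm homrel}(S)=\cL_{\rm homrel}(S')$ via Remark~\ref{r.funda} is fine, and the observation that every saddle in $\Lambda_0$ is homoclinically related to $S$ is correct.

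However, the concatenation step as written does not work. You fix integers $a,b$ and take the periodic point coded by \emph{$n$ repetitions} of the block
\[
[\text{transition }S\to S',\; a\text{ loops near }S',\;\text{transition }S'\to S,\; b\text{ loops near }S].
\]
But $n$ repetitions of a single word code the \emph{same} periodic orbit as one repetition, so $\chi_c(R_{a,b,n})$ is independent of $n$. Equivalently, in your displayed estimate both numerator and denominator are homogeneous of degree~$1$ in $n$: the $O(n)$ error (coming, as you say, from the $2n$ transition blocks) divided by $T_n\asymp n$ leaves an $O(1)$ residue that does not vanish. So the limit you claim is never taken.

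The fix is to let the number of loops, not the number of repetitions, grow: take a \emph{single} block with $an$ loops near $S'$ and $bn$ loops near $S$ (and only two transitions). Then the transition contribution is $O(1)$, the distortion between the shadowing orbit and the reference periodic orbits is $o(n)$ (here your tempered-distortion remark is exactly right under the $C^1$ hypothesis), and $T_n\asymp n$, so the exponent converges. Note also that the limiting weight is
\[
\frac{a\,\mathrm{per}(S')}{a\,\mathrm{per}(S')+b\,\mathrm{per}(S)},
\]
not $a/(a+b)$; this does not affect the conclusion, since every $t\in[0,1]$ is still realized by suitable $a,b$, but your final formula $t\chi_c(S')+(1-t)\chi_c(S)$ with $t=a/(a+b)$ is off unless $\mathrm{per}(S)=\mathrm{per}(S')$. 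With these two corrections your argument goes through and is precisely the construction the paper's proof alludes to.
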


\begin{proof}
    Recall that if the saddles $S$ and $S'$ are  homoclinically related then
    there exists a horseshoe $\Lambda_{S,S'}\subset \Lambda_F$
    that contains both saddles. In particular $\Lambda_{S,S'}$
    is a uniformly hyperbolic locally maximal and transitive set with respect to $F$.
    The existence of a Markov partition implies that we can construct orbits in
    the hyperbolic set that spend a fixed proportion of time close to $S$ and $S'$, respectively.
    This is enough to obtain periodic points in $\Lambda_{S,S'}$ with Lyapunov exponents dense  in the interval $[\chi_c(S),\chi_c(S')]$.
    Finally,   any such periodic orbit is homoclinically related to $S$ and to $S'$.
 \end{proof}

\subsubsection{Spectrum of Lyapunov regular points}
Let us define
\[
	\cL_{\rm reg}\big(H(S,F)\big)
	\eqdef\big\{\chi_{\rm c}(R)\colon R\in H(S,F) \text{ Lyapunov regular}\big\}.
\]
We finally obtain the possible spectrum of central Lyapunov exponents.
Recall the definition of $\widetilde\beta$ in Proposition~\ref{p.beta} that is the biggest Lyapunov exponent as in~\eqref{e.lyapkat} that is different from $\beta$.

\begin{prop}\label{p.spectrum}
	Let
	\[
		\beta^\ast\eqdef
		\exp\sup\big\{\chi\colon\chi
		\in \cL_{\rm reg}\big(H( Q^\ast, F)\big), \chi\ne\log\beta\big\}.
	\]
	We have
	\[\begin{split}
		\overline{\cL_{\rm per}\big(H( Q^\ast,F)\big)}
		&= \big[\log\lambda,\log\beta^\ast\big]\cup\{\log\beta\}\\
		&\subset \overline{\cL_{\rm reg}\big(H(Q^\ast,F)\big)}
		\subset \big[\log\lambda,\log\widetilde\beta\,\big]\cup\{\log\beta\}.
 	\end{split}\]
\end{prop}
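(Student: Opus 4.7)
\begin{demp}
I would split the proposition into three claims matching the three displayed relations: (a) the equality $\overline{\cL_{\rm per}(H(Q^\ast,F))}=[\log\lambda,\log\beta^\ast]\cup\{\log\beta\}$; (b) the trivial inclusion $\overline{\cL_{\rm per}}\subset\overline{\cL_{\rm reg}}$ coming from the fact that every periodic point is Lyapunov regular; and (c) the upper bound $\overline{\cL_{\rm reg}}\subset[\log\lambda,\log\widetilde\beta\,]\cup\{\log\beta\}$.

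For (a) the plan is first to identify the two extremal periodic exponents: both saddles $P$ and $Q$ lie in $H(Q^\ast,F)=\Lambda_F$ by Theorem~\ref{th.homoclinicbarQ} together with Lemma~\ref{l.hp} and Remark~\ref{c.01}, and their central exponents are $\log f_0'(1)=\log\lambda$ and $\log f_0'(0)=\log\beta$ respectively. Propositions~\ref{p.lajk} and~\ref{p.posspec} then produce periodic fibre orbits whose central exponents accumulate $0$ from below and above, and Remark~\ref{r.fixe} lifts them to saddles of indices $u$ and $u+1$ in $\Lambda_F$; the former are homoclinically related to $P$ by Lemma~\ref{l.homrelnegspec}(1) (the fibre stable manifold contains $[0,1]$ by the construction in Proposition~\ref{p.lajk}), the latter to $Q^\ast$ by Corollary~\ref{c.homoclinicallyrelated}. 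Applying Lemma~\ref{l.homrelspec} to each of these two homoclinic classes fills the intervals $[\log\lambda,0)$ and $(0,\chi_c(Q^\ast)]$, whose closure is $[\log\lambda,\chi_c(Q^\ast)]\subset\overline{\cL_{\rm per}}$. To push the periodic spectrum up to $\log\beta^\ast$, for each $\chi\in(\chi_c(Q^\ast),\log\beta^\ast)$ I would use the definition of $\beta^\ast$ to pick a Lyapunov regular $S\in H(Q^\ast,F)$ with $\chi_c(S)$ close to $\chi$ and invoke a Katok-type horseshoe approximation (adapted to the $C^1$ dominated-splitting setting, cf.~\cite{AbdBonCroDiaWen:07}) to obtain a saddle of index $u+1$ homoclinically related to $Q^\ast$ whose exponent is close to $\chi_c(S)$; one more application of Lemma~\ref{l.homrelspec} interpolates the intermediate values. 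The reverse inclusion in (a) is immediate from (c) applied to periodic points, together with the observation that in a periodic orbit the central exponent can equal $\log\beta$ only at $Q$.

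For (c), let $S=(s^s,s^u,s)\in H(Q^\ast,F)$ be Lyapunov regular and set $\xi=\varpi(s^s,s^u)$. If $\xi\notin E$, Proposition~\ref{p.beta} gives $\chi_c(S)\le\log\widetilde\beta$. If $\xi\in E$, the forward fibre orbit is eventually iterated only by $f_0$, and depending on whether it sits at the repelling fixed point $0$ or converges to the attracting fixed point $1$, the forward exponent equals $\log\beta$ or $\log\lambda$; Lyapunov regularity forces the backward exponent to match. The lower bound $\log\lambda$ for the regular (and hence periodic) spectrum comes from a parallel estimate: each iterate spent in the contracting region of $f_0$ near $1$ is necessarily paid for by a preceding expanding excursion near $0$, which is essentially the content of Proposition~\ref{p.beta} read in the other direction.

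The main obstacle is the Katok-style approximation step in (a): in $C^1$ regularity with only a dominated splitting, passing from a non-uniformly hyperbolic ergodic measure with central exponent close to $\log\beta^\ast$ to nearby periodic orbits of index $u+1$ is delicate. Here the skew-product structure, together with the tempered-distortion bounds developed in Section~\ref{ss.lyap}, is what makes the approximation accessible, and once it is granted the remainder of the proof is essentially a bookkeeping of the homoclinic relations already collected in Section~\ref{s.tranhomint}.
\end{demp}
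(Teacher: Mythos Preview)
Your proposal follows essentially the same route as the paper: Propositions~\ref{p.lajk} and~\ref{p.posspec} produce periodic fibre points with exponents approaching $0$ from both sides, Lemma~\ref{l.homrelnegspec}(1) and Corollary~\ref{c.homoclinicallyrelated} establish the required homoclinic relations to $P$ and to $Q^\ast$, Lemma~\ref{l.homrelspec} interpolates, and the upper bound comes from Proposition~\ref{p.beta}. Part~(b) and the case split in~(c) are also handled the same way.

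The one substantive divergence is the step $[\chi_c(Q^\ast),\log\beta^\ast]\subset\overline{\cL_{\rm per}}$. The paper dispatches this in a single word (``similarly''), but the argument it is parallel to really only yields $[\chi_c(Q^\ast),\sup\{\chi_c(R):R\text{ periodic, }\chi_c(R)\ne\log\beta\}]$, and the identification of that supremum with $\log\beta^\ast$ (which is defined over \emph{regular} points) is not justified. You correctly single this out as the main obstacle and propose a Katok-type periodic approximation adapted to the $C^1$ dominated setting; that is a reasonable way to close the gap, and in this respect your treatment is more honest than the paper's. Note, however, that the reference you give (\cite{AbdBonCroDiaWen:07}) concerns $C^1$-\emph{generic} diffeomorphisms, so you would still need to exploit the skew-product structure and the tempered distortion of Section~\ref{ss.lyap} explicitly rather than citing a black-box result.

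One caveat on~(c): your justification of the lower bound $\log\lambda$ for the regular spectrum (``Proposition~\ref{p.beta} read in the other direction'') is only a heuristic. The paper does not argue this bound either, simply asserting it in its final line. From the standing hypotheses (F0.i) and (F1.i) one gets only $\chi_c(S)\ge\log\min\{\lambda,\gamma'\}$, and nothing in the axioms forces $\gamma'\ge\lambda$; a sharp lower bound $\log\lambda$ would indeed require a balancing argument of the kind you describe, but it remains to be written out.
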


\begin{rema}{\rm
	Using different methods involving shadowing-like properties, one can in fact show that $[\log\lambda,\log\widetilde\beta\,]\cup\{\log\beta\}$ is set of all possible upper/lower central Lyapunov exponents, that is, all exponents that are obtained when we replace $\lim$ by $\limsup$/$\liminf$ in~\eqref{e.lyapli}. Hence, in particular, we have equalities in Proposition~\ref{p.spectrum}. For the details refer to~\cite{DiaGelRam:}.
}\end{rema}

\begin{proof}[Proof of Proposition~\ref{p.spectrum}]
Let us first prove that $(0,\log\beta^\ast)\subset\cL_{\rm per}(H( Q^\ast,F))$. Note that by Proposition~\ref{p.posspec} for every $\varepsilon>0$ there exist a finite sequence $(\xi_0\ldots\xi_{m-1})$ and a fixed point $q_{(\xi_0\ldots\xi_{m-1})^\ZZ}\in(0,1)$ with respect to the map $f_{[\xi_0\ldots\xi_{m-1}]}$ that has Lyapunov exponent in
$(0,\varepsilon)$. Therefore, the corresponding periodic point $Q_{(\xi_0\ldots\xi_{m-1})^\ZZ}$ has central Lyapunov exponent in $(0,\varepsilon)$. By Corollary~\ref{c.homoclinicallyrelated} this point is homoclinically related to $ Q^\ast$. By
Lemma~\ref{l.homrelspec} and~\eqref{e.spec} we hence obtain that
\[
	[0,\chi_c(Q^\ast)]\subset \overline{\cL_{\rm per}\big(H( Q^\ast,F)\big)}.
\]	
Similarly, we obtain that
\[
	[\chi_c( Q^\ast),\log\beta^\ast]
	\subset \overline{\cL_{\rm per}\big(H( Q^\ast,F)\big)}
\]	
proving that
\[
	[0,\log\beta^\ast] \subset\overline{\cL_{\rm per}\big(H( Q^\ast,F)\big)}.
\]	

Now we consider the negative part of the spectrum. Note that by Proposition~\ref{p.lajk} for every $\varepsilon>0$ there exist a finite sequence $(\xi_0\ldots\xi_{m-1})$ and a fixed point $p_{(\xi_0\ldots\xi_{m-1})^\ZZ}$ with respect to the map $f_{[\xi_0\ldots\xi_{m-1}]}$ that has Lyapunov exponent in $(-\varepsilon,0)$ and whose stable manifold contains the interval $[0,1]$. By item 1) in Lemma~\ref{l.homrelnegspec} the corresponding hyperbolic periodic point $P_{(\xi_0\ldots\xi_{m-1})^\ZZ}$ is homoclinically related to the fixed point $P$. Exactly as above, we obtain
\[
	[\log\lambda,0]\subset\overline{\cL_{\rm per}\big(H(P,F)\big)}.
\]	
Since $H(P,F)\subset H(Q^\ast,F)$, this proves
\[
	[\log\lambda,\log\beta^\ast]\cup\{\log\beta\}
	\subset \overline{\cL_{\rm per}\big(H( Q^\ast,F)\big)}.
\]	
By definition of $\beta^\ast$ we have
\[
	\cL_{\rm per}\big(H( Q^\ast,F)\big)
	\subset [\log\lambda,\log\beta^\ast\,]\cup\{\log\beta\}.
\]	

Clearly, $\cL_{\rm per}(H( Q^\ast,F))\subset \cL_{\rm reg}(H( Q^\ast,F))$.
Finally, note that by Proposition~\ref{p.beta}, any Lyapunov regular point is either contained in the stable manifold of $Q$ and hence has exponent $\log\beta$ or has exponent less or equal than $\log\widetilde\beta$, proving
\[
	\cL_{\rm reg}\big(H( Q^\ast,F)\big)
	\subset [\log\lambda,\log\widetilde\beta\,]\cup\{\log\beta\}.
\]
This finishes the proof of the proposition.	
\end{proof}

\subsubsection{Spectrum of ergodic measures}
The following results we will need in the following section. We denote by $\cM(\Lambda)$ the set of $F$-invariant Borel probability measures supported on a set $\Lambda$ and by $\cM_{\rm erg}(\Lambda)$ the subset of ergodic measures. For $\mu\in\cM(\Lambda)$ let
\[
    \chi_c(\mu)\eqdef \int\log\,\lVert dF|_{E^c}\rVert\,d\mu.
\]
Denote by $\de_Q$ the Dirac measure at $Q$ and consider
\[
    \cL_{\rm erg}\big(H( Q^\ast,F)\big)
    \eqdef \big\{\chi_c(\mu)\colon
        \mu\in\cM_{\rm erg}(H( Q^\ast,F))\setminus\{\delta_Q\}\big\}.
\]
The following is an immediate consequence of Proposition~\ref{p.beta}.

\begin{prop}\label{l.extremelya}
    We have
    $\displaystyle
        \big(\log\widetilde\beta
        ,\log\beta\big)\cap
        \cL_{\rm erg}(H( Q^\ast,F))
        =\emptyset.
    $
\end{prop}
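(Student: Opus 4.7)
\begin{demo}
The plan is to take an arbitrary ergodic measure $\mu\in\cM_{\rm erg}(H( Q^\ast,F))\setminus\{\delta_Q\}$, push it forward to $\Sigma_2$ by setting $\nu\eqdef\varpi_\ast\mu$, and then dichotomize on whether $\nu$ assigns full or null mass to the exceptional set $E$ appearing in Proposition~\ref{p.beta}. A quick check shows that $E$ is $\sigma$-invariant: the condition ``the forward tail of $\xi$ is eventually $0$'' is unaffected by a single shift. Consequently, by ergodicity of $\nu$, one has $\nu(E)\in\{0,1\}$, and these two alternatives will supply the two admissible parts of the spectrum of the proposition.

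In the case $\nu(E)=0$, for $\mu$-a.e.\ point $S=(s^s,s^u,s)\in H( Q^\ast,F)$ the itinerary $\xi\eqdef\varpi(s^s,s^u)$ lies outside $E$. Since $S$ is Lyapunov regular $\mu$-a.e., Birkhoff's theorem applied to $\log\lVert dF|_{E^c}\rVert$ gives $\chi_c(S)=\chi_c(\mu)$; and the skew-product formula~\eqref{e.lyapkat} identifies this value with $\chi(s,\xi)$. Proposition~\ref{p.beta} then caps $\chi(s,\xi)\le\log\widetilde\beta$, so $\chi_c(\mu)\le\log\widetilde\beta$ lies outside $(\log\widetilde\beta,\log\beta)$.

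In the case $\nu(E)=1$, I will first upgrade this to $\nu=\delta_{0^\ZZ}$. Setting $E_k\eqdef\{\xi\in\Sigma_2:\xi_j=0\text{ for all }j\ge k\}$ so that $E=\bigcup_{k\ge 0}E_k$, a direct calculation gives $\sigma^{-1}(E_k)=E_{k+1}$; hence $\sigma$-invariance of $\nu$ yields $\nu(E_k)=\nu(E_{k+1})$ for every $k$, and the nesting $E_k\subset E_{k+1}$ then forces this common value to equal $\nu(E)=1$. In particular $\nu(\{\xi_0=0\})\ge\nu(E_0)=1$, and by $\sigma$-invariance $\nu(\{\xi_n=0\})=1$ for every $n\in\ZZ$; intersecting these full-measure cylinders gives $\nu=\delta_{0^\ZZ}$. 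Hence $\mu$ is concentrated on the fiber $\{\theta\}\times[0,1]$, on which $F$ restricts to $\mathrm{id}_\theta\times f_0$. Because $f_0\colon[0,1]\to[0,1]$ is strictly increasing with $0$ and $1$ as its only fixed points (expanding and contracting respectively, by~(F0.i)), every orbit in $(0,1)$ is strictly monotone with limit $1$; the only ergodic invariant probabilities for $f_0$ are therefore $\delta_0$ and $\delta_1$, corresponding to $\delta_Q$ and $\delta_P$ on $\Lambda_F$. Excluding $\delta_Q$ by hypothesis leaves $\mu=\delta_P$, and then $\chi_c(\delta_P)=\log\lambda<\log\widetilde\beta$, again outside $(\log\widetilde\beta,\log\beta)$.

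The main analytic content is already bundled inside Proposition~\ref{p.beta}; the principal obstacle in the plan above is the bootstrap $\nu(E)=1\Rightarrow\nu=\delta_{0^\ZZ}$ in the second case, but this reduces to the elementary measure-theoretic arithmetic just outlined, combined with the trivial classification of $f_0$-invariant ergodic probabilities on $[0,1]$.
\end{demo}
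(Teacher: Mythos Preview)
Your proof is correct and is precisely the argument the paper has in mind; the paper merely states that the proposition ``is an immediate consequence of Proposition~\ref{p.beta}'' without spelling out the dichotomy, whereas you make explicit the two cases $\nu(E)=0$ and $\nu(E)=1$ and carry each through cleanly. The only point you leave implicit is that $\log\widetilde\beta>0>\log\lambda$ (needed for the final line), but this is clear since the expanding periodic point $q^\ast$ of Lemma~\ref{l.newfixedexpandingpoint} has itinerary outside $E$ and positive exponent.
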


\subsection{Phase transitions}\label{sec:phase}

In this section we continue our analysis of spectral properties
and study equilibrium states. Recall that, given a continuous
potential $\varphi\colon\Lambda_F\to\RR$, a $F$-invariant Borel
probability measure $\nu$ is called an \emph{equilibrium state} of
$\varphi$ with respect to $F|_{\Lambda_F}$ if
\[
    h_\nu(F)+\int\varphi\,d\nu
    = \max_{\mu\in\cM(F|\Lambda_F)}\Big( h_\mu(F)+\int\varphi\,d\mu\Big),
\]
where $h_\mu(F)$ denotes the measure theoretic entropy of the measure $\mu$. Notice that as the central direction has dimension one such maximizing measure indeed exists by~\cite[Corollary 1.5]{DiaFis:} (see also~\cite{CowYou:05}).
Without loss of generality, we can always assume that such measure is ergodic. Indeed, given an equilibrium state for $\varphi$ that is non-ergodic, any ergodic measure in its ergodic decomposition is also an equilibrium state for $\varphi$.
Note that
\begin{equation}\label{varprinc}
    P(\varphi) =
    \max_{\mu\in\cM(F|\Lambda_F)}\Big( h_\mu(F)+\int\varphi\,d\mu\Big)
\end{equation}
is the \emph{topological pressure} of $\varphi$ with respect to $F|_{\Lambda_F}$
(see~\cite{Wal:81}). An equilibrium state for the zero potential $\varphi=0$ is simply a \emph{measure of maximal entropy} $h(F)=h(F|_{\Lambda_F})$.

We will study the following family of continuous potentials
\[
    \varphi_t\eqdef -t\log\,\lVert dF|_{E^c}\rVert,
    \quad t\in\RR,
\]
and will continue denoting $P(t)=P(\varphi_t)$. Note that $t\mapsto P(t)$ is convex (and hence continuous and differentiable on a residual set, see also~\cite{Wal:81} for further details). Recall that a number $\alpha\in \RR$ is said to
be a \emph{subgradient}  at $t$ if we have $P(t+s)\ge P(t)+s\, \alpha$ for all $s\in \RR$.

\begin{lemm}\label{l.P2}
    For any number $t\in\RR$ and any equilibrium state $\mu_t$ of the potential $\varphi_t$ the number $-\chi_c(\mu_t)$ is a subgradient of $s\mapsto P(s)$ at $s=t$.  If moreover $s\mapsto P(s)$ is differentiable at $s=t$ then $\chi_c(\mu_t)=-P'(t)$.
\end{lemm}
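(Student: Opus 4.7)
The plan is to derive both assertions directly from the variational principle~\eqref{varprinc} together with the hypothesis that $\mu_t$ attains the supremum at parameter $t$. First I would apply~\eqref{varprinc} at an arbitrary parameter $s\in\RR$, using $\mu_t$ as a test measure:
\[
	P(s) \;\ge\; h_{\mu_t}(F)+\int\varphi_s\,d\mu_t
	\;=\; h_{\mu_t}(F) - s\,\chi_c(\mu_t).
\]
Next I would use the hypothesis that $\mu_t$ is an equilibrium state for $\varphi_t$, which precisely means that equality holds when $s=t$:
\[
	P(t) \;=\; h_{\mu_t}(F) - t\,\chi_c(\mu_t).
\]
Subtracting, one obtains $P(s)-P(t)\ge -(s-t)\chi_c(\mu_t)$ for every $s\in\RR$. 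Setting $s=t+r$ yields $P(t+r)\ge P(t)+r\cdot(-\chi_c(\mu_t))$ for all $r\in\RR$, which is exactly the defining inequality of $-\chi_c(\mu_t)$ being a subgradient of $s\mapsto P(s)$ at $s=t$. This settles the first assertion.

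For the second assertion, I would invoke the convexity of $s\mapsto P(s)$, itself a direct consequence of the variational principle (the supremum of a family of affine functions of $s$, one for each $\mu$, is convex). A standard fact from one-dimensional convex analysis states that a convex function on $\RR$ is differentiable at a point $t$ if and only if its subgradient at $t$ is a singleton, in which case the unique subgradient equals $P'(t)$. Combining this with the first assertion, if $P$ is differentiable at $t$ then necessarily $-\chi_c(\mu_t)=P'(t)$, i.e., $\chi_c(\mu_t)=-P'(t)$.

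I do not anticipate a genuine obstacle: both steps are standard in thermodynamic formalism once the variational principle~\eqref{varprinc} is available, and the existence of the equilibrium state $\mu_t$ (guaranteed here by the one-dimensionality of the central bundle through~\cite{DiaFis:}) is not needed for the inequalities themselves, only for declaring $\mu_t$ to be a valid test measure satisfying equality at $s=t$. The sole point requiring a brief remark is that the inequality in the first step must be verified for every $s\in\RR$, positive and negative, but this is immediate since $\mu_t$ is a legitimate $F$-invariant probability measure regardless of the value of $s$.
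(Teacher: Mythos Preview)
Your proposal is correct and follows essentially the same approach as the paper: use the variational principle with $\mu_t$ as a test measure at an arbitrary parameter, combine with the equality $P(t)=h_{\mu_t}(F)-t\,\chi_c(\mu_t)$ from the equilibrium hypothesis to obtain the subgradient inequality, and then invoke the standard fact that a convex function is differentiable at a point if and only if the subgradient there is unique. The paper's proof is just a more compressed version of exactly these steps.
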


\begin{proof}
Given $t\in \RR$ and an equilibrium state $\mu_t$, it follows from
the variational principle~\eqref{varprinc} that for all $s\in\RR$
we have
    \[
    P(t+s)\ge
    h_{\mu_t}(F)-(t+s)\,\chi_c(\mu_t) =
    P(t) - s\,\chi_c(\mu_t),
    \]
that is, $-\chi_c(\mu_t)$ is a subgradient at $t$.

If the pressure is differentiable at $t$ then this subgradient is unique and thus all equilibrium states of $\varphi_t$ have the same exponent given by $-P'(t)$.
\end{proof}

We now derive  the existence of a \emph{first order phase transition}, that is, a parameter $t$ at which the pressure function is not differentiable.
Note that in our case, by Lemma~\ref{l.P2}, this is equivalent to the existence of a parameter $t$ and (at least) two equilibrium states for $\varphi_t$ with different central exponent.

\begin{prop}\label{p.phasetrans}
    There exists a parameter $t_Q\in\big[-h(F)/(\log\beta-\log\widetilde\beta),0\big)$
 so that  for every $t\le t_Q$ the measure $\delta_Q$ is an equilibrium state for $\varphi_t$ and $P(t)=-t\log\beta$. Moreover, $s\mapsto P(s)$ is not differentiable at $s=t_Q$ and
    \[
        D^-P(t_Q)=-\log\beta\quad\text{ and }\quad
        D^+P(t_Q)\ge-\log\widetilde\beta.
    \]
\end{prop}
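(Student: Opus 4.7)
The plan is to compare the affine function $L(t)\eqdef -t\log\beta$ with the restricted pressure
\[
\widetilde P(t)\eqdef \sup\big\{h_\mu(F)-t\chi_c(\mu) : \mu\in\cM_{\rm erg}(\Lambda_F)\setminus\{\delta_Q\}\big\}.
\]
Since $h_{\delta_Q}(F)=0$ and $\chi_c(\delta_Q)=\log\beta$ (recall $Q=(\theta,0)$ and $f_0'(0)=\beta$), we have $L(t)=h_{\delta_Q}(F)-t\chi_c(\delta_Q)$. The variational principle together with the ergodic decomposition yields
\[
P(t)=\max\{L(t),\,\widetilde P(t)\},
\]
and $\delta_Q$ is an equilibrium state of $\varphi_t$ precisely when $L(t)\ge\widetilde P(t)$. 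Since $\widetilde P-L$ is convex, the sublevel set $\{t : L(t)\ge\widetilde P(t)\}$ is an interval; I would define $t_Q$ to be its right endpoint.

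Next I would locate $t_Q$ within the asserted range. Proposition~\ref{l.extremelya} provides the key spectral gap: every ergodic $\mu\ne\delta_Q$ satisfies $\chi_c(\mu)\le\log\widetilde\beta$. Hence for $t<0$,
\[
\widetilde P(t)\le h(F)-t\log\widetilde\beta,
\]
so $L(t)\ge\widetilde P(t)$ whenever $-t(\log\beta-\log\widetilde\beta)\ge h(F)$, which yields $t_Q\ge -h(F)/(\log\beta-\log\widetilde\beta)$. Conversely, the horseshoe structure of $\Lambda_F$ supports ergodic measures distinct from $\delta_Q$ with positive entropy, so $\widetilde P(0)\ge h(F)>0=L(0)$, giving $t_Q<0$. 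In particular, $P(t)=L(t)=-t\log\beta$ on $(-\infty,t_Q]$ and $\delta_Q$ is indeed an equilibrium state throughout this range.

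For the phase transition, note that $P$ coincides with the affine function $L$ of slope $-\log\beta$ on $(-\infty,t_Q]$, so $D^-P(t_Q)=-\log\beta$. For $t>t_Q$ one has $P(t)=\widetilde P(t)>L(t)$, so no equilibrium state $\mu_t$ of $\varphi_t$ can have $\delta_Q$ as an ergodic component: otherwise $\delta_Q$ itself would be an equilibrium state (since both $h_\mu$ and $\mu\mapsto\chi_c(\mu)$ are affine in $\mu$), forcing $L(t)=P(t)$, a contradiction. Thus every ergodic equilibrium component $\nu$ has $\chi_c(\nu)\le\log\widetilde\beta$, and so any equilibrium state $\mu_t$ satisfies $\chi_c(\mu_t)\le\log\widetilde\beta$. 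By Lemma~\ref{l.P2}, the value $-\chi_c(\mu_t)$ is a subgradient of $P$ at $t$, whence $D^+P(t)\ge-\log\widetilde\beta$ for every $t>t_Q$. Right-continuity of the right derivative of the convex function $P$ then yields $D^+P(t_Q)\ge-\log\widetilde\beta>-\log\beta=D^-P(t_Q)$, proving the failure of differentiability at $t_Q$.

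The main technical point I would expect to need care is the ergodic-decomposition step ensuring that no equilibrium state at $t>t_Q$ places mass on $\delta_Q$; the argument relies on the affinity of both $h_\mu$ and $\chi_c(\mu)$ in $\mu$, which forces every ergodic component of an equilibrium state to be itself an equilibrium state. The use of Lemma~\ref{l.P2} also implicitly relies on existence of equilibrium states for each $\varphi_t$, which is guaranteed by the upper semi-continuity of entropy in the one-dimensional central setting already invoked in the preceding discussion.
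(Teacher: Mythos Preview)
Your proposal is correct and follows essentially the same route as the paper. The paper argues directly: it first shows by contradiction that $\delta_Q$ must be an equilibrium state for some $t$ (using exactly your bound $\widetilde P(t)\le h(F)-t\log\widetilde\beta$ from Proposition~\ref{l.extremelya}), then defines $t_Q=\max\{t:P(t)=-t\log\beta\}$, and finishes with the same subgradient argument via Lemma~\ref{l.P2}. Your introduction of the restricted pressure $\widetilde P$ and the decomposition $P=\max\{L,\widetilde P\}$ is a clean repackaging of the same reasoning; the paper does not name $\widetilde P$ explicitly but uses the identical estimates. Your explicit invocation of right-continuity of $D^+P$ to pass from $\tau>t_Q$ back to $t_Q$ spells out a step the paper leaves implicit.
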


\begin{proof}
Let us first show that $\delta_Q$ is an equilibrium state for some parameter $t$.
Recall that (F0.i) implies that $\chi_c(\delta_Q)=\log\beta$.
Note that the variational principle~\eqref{varprinc} implies that for every $t\in\RR$ we have
    \begin{equation}\label{e.contrad}
        P(t)\ge h_{\delta_Q}(F)-t\chi_c(\delta_Q)=-t\log\beta.
    \end{equation}
Arguing by contradiction, assume that there exists no $t\in\RR$ such that $\delta_Q$ is an equilibrium state for $\varphi_t$. Then, by~\eqref{varprinc}, $P(t)>-t\log\beta$ for all $t$. By Proposition~\ref{l.extremelya} there is no ergodic $F$-invariant measure different from $\delta_Q$ with central Lyapunov exponent within the interval $(\log\widetilde\beta,\infty)$.
Hence, for any $t\in\RR$ any ergodic equilibrium state of $\varphi_t$ different from $\de_Q$  has exponent $\le\log\widetilde\beta$. In particular, for every $t<0$ we have
\begin{equation}\label{e.samesame}
    P(t)
    = h_{\mu_t}(F)+\lvert t\rvert\,\chi_c(\mu_t)
    \le h(F) +\lvert t\rvert\log\widetilde\beta,
\end{equation}
for some ergodic equilibrium state $\mu_t$ of $\varphi_t$.
Summarizing, for $t<0$ we have
\begin{equation}\label{e.saame}
    \lvert t\rvert\,\log\beta<P(t)\le h(F)+\lvert t\rvert\,\log\widetilde\beta.
\end{equation}
But this is a contradiction if $t<0$ and $\lvert t\rvert< h(F)/(\log\beta-\log\widetilde\beta)$. Therefore, there exist $t\in\RR$ so that $\delta_Q$ is an equilibrium state for $\varphi_t$.

Since $P(0)=h(F)\ge\log 2>0$, by continuity of $t\mapsto P(t)$ we have
\[
	t_Q\eqdef \max\{t\in\RR\colon P(t)=-t\log\beta\}<0.
\]	
Consider $\tau\in(t_Q,0)$ and an ergodic equilibrium state $\mu_\tau$ for $\varphi_\tau$.
Since $P(\tau)\ne -\tau\log\beta$ we have $\mu_\tau\ne\delta_Q$. Hence, by Proposition~\ref{l.extremelya}, we have $\chi(\mu_\tau)\le\log\widetilde\beta$. Note again that $P(\tau)> -\tau\log\beta$. Thus, arguing exactly as in~\eqref{e.samesame} and~\eqref{e.saame}, we yield $\lvert\tau\rvert< h(F)/(\log\beta-\log\widetilde\beta)$. In particular, this implies
\[
	\lvert t_Q\rvert \le \frac{h(F)}{\log\beta-\log\widetilde\beta}.
\]	
This completes the first part of the lemma.

What remains to show are the estimates for the left and right derivatives at $t_Q$.
Consider again a number $\tau\in(t_Q,0)$ and an ergodic equilibrium state $\mu_\tau$ for $\varphi_\tau$. As above, $\mu_\tau\ne\delta_Q$ and $\chi_{\rm c}(\mu_\tau)\le\log\widetilde\beta$. By Lemma~\ref{l.P2}, $-\chi_{\rm c}(\mu_\tau)$ is a subgradient at $\tau$. Hence, $D^+P(t_Q)\ge-\log\widetilde\beta$. On the other hand, by definition of $t_Q$, we have $D^-P(t_Q)=-\log\beta$. Hence $t\mapsto P(t)$ is not differentiable at $t_Q$.
\end{proof}

\section{Proofs of the main results}\label{sec:ptheo}

\begin{proof}[Proof of Theorem~\ref{theorem0}]
	Items (A) and (C) follow immediately from Propositions~\ref{p.residualtrivfib} and~\ref{p.beta}, respectively.
	Finally, item (B) is an one-dimensional version of item (B) in Theorem~\ref{t.bonyexample} using also transitivity in item (D), so we will omit its proof.
\end{proof}

\begin{proof}[Proof of Theorem~\ref{t.bonyexample}]
The item (A.a) follows from item (A) in Theorem~\ref{theorem0} together with the skew-product structure of $F$.

By Theorem~\ref{th.homoclinicbarQ} there is a saddle $ Q^\ast$ of index $u+1$ such that the homoclinic class $H=H( Q^\ast,F)$ coincides with the locally maximal invariant set $\Lambda_F$ in $\CC$. This homoclinic class coincides with the closure of all saddle points homoclinically related to $Q^\ast$. By Corollary~\ref{c.homoclinicallyrelated} every saddle of index $u+1$ in $\Lambda_F$ is homoclinically related to $Q^\ast$. By Lemma~\ref{l.cont} the spine of every periodic point of index $u+1$ is non-trivial. Hence, the set of all points with non-trivial spines is dense in $\Lambda_F$. This shows item (A.b).

The first part of item (B) follows from the previous arguments. Recall that $P$ has index $u$ and that its homoclinic class of $P$ is non-trivial (Lemma~\ref{l.hp}) and therefore contains infinitely many saddles of index $u$. Since this class is  contained in $\Lambda_F$ we are done.

The first part of item (C) follows from item (C) in Theorem~\ref{theorem0} together with the skew-product structure of $F$. The fact that the spectrum contains an interval and the existence of a phase transition follow immediately from Propositions~\ref{p.spectrum} and~\ref{p.phasetrans}.

Again, by Theorem~\ref{th.homoclinicbarQ} for the saddle $ Q^\ast$ of index $u+1$ the homoclinic class $H=H( Q^\ast,F)$ is the locally maximal set in $\CC$ and hence contains the non-trivial class $H(P,F)$. Finally, Lemmas~\ref{l.hq} implies that $H(Q,F)=\{Q\}\subset H$. This proves item (D) and hence the theorem.
\end{proof}


\bibliographystyle{amsplain}

\end{document}